\numberwithin{equation}{section}
\newcommand{\Q}{\mathbb{Q}}
\newcommand{\R}{\mathbb{R}}
\newcommand{\N}{\mathbb{N}}
\newcommand{\Z}{\mathbb{Z}}
\newtheorem{thm}{Th\'eor\`eme}
\newtheorem{lem}{Lemme}
\newtheorem{Cor}{Corollaire}
\newcommand{\bd}{\mathbf{d}}
\newcommand{\y}{\mathbf{y}}
\renewcommand{\v}{\mathbf{v}}
\renewcommand{\rho}{\varrho}
\renewcommand{\leq}{\leqslant}
\renewcommand{\le}{\leqslant}
\renewcommand{\geq}{\geqslant}
\renewcommand{\ge}{\geqslant}
\renewcommand{\mod}[1]{\hspace{-2.9mm}\pmod{#1}}
\newcommand{\x}{{\bf x}}
\newcommand{\ma}{\mathbf}
\newcommand{\ben}{\begin{enumerate}}
\newcommand{\een}{\end{enumerate}}
\newcommand{\beq}{\begin{equation}}
\newcommand{\eeq}{\end{equation}}
\newcommand{\ve}{\varepsilon}
\newcommand{\mcal}{\mathcal}
\newcommand{\lab}{\label}
\newcommand{\la}{\lambda}
\newcommand{\colt}[2]{\genfrac{}{}{0pt}{1}{#1}{#2}}
\newcommand{\tolt}[3]{\colt{#1}{\colt{#2}{#3}}}
\renewcommand{\d}{\mathrm{d}}
\renewcommand{\leq}{\leqslant}
\renewcommand{\geq}{\geqslant}
\theoremstyle{definition}
\newtheorem*{ack}{Remerciements}
\newcommand{\mal}{\boldsymbol{\lambda}}
\DeclareMathOperator{\vol}{vol}
\DeclareMathOperator{\Res}{Res}
\newcommand{\cU}{\mathcal{U}}
\newcommand{\cV}{\mathcal{V}}
\newcommand{\cA}{\mathcal{A}}
\newcommand{\by}{\mathbf{y}}
\newcommand{\smod}[1]{\;(\text{mod }#1)}
\newcommand{\mka}{\boldsymbol{\kappa}}
\newcommand{\mve}{\boldsymbol{\varepsilon}}
\newcommand{\mnu}{\boldsymbol{\nu}}
\newcommand{\mmu}{\boldsymbol{\mu}}
\newcommand{\msigma}{\boldsymbol{\sigma}}
\newcommand{\mdelta}{\boldsymbol{\delta}}
\newcommand{\mDelta}{\boldsymbol{\Delta}}
\def\dm{{\textstyle{\frac{1}{2}}}}
\begin{document}

\title{Le probl\`eme des diviseurs pour des formes binaires de degr\'e 4}

\author{R. de la Bret\`eche}
\address{
Institut de Math\'ematiques de Jussieu\\
Universit\'e Paris 7 Denis Diderot\\
Case Postale 7012\\
2, Place Jussieu\\
F-75251 Paris cedex 05\\ 
France}
\email{breteche@math.jussieu.fr}

\author{T.D. Browning}
\address{School of Mathematics\\
University of Bristol\\ Bristol\\ BS8 1TW\\
United Kingdom}
\email{t.d.browning@bristol.ac.uk}

\dedicatory{
\begin{center} \`A la m\'emoire affectueuse de George Greaves\end{center}}

\begin{abstract} Nous \'etudions l'ordre moyen du nombre de diviseurs 
des valeurs de certaines formes
binaires quartiques qui ne sont pas irr\'eductibles sur~$\Q$.
\end{abstract}

\begin{altabstract}
We study the average order of the divisor function, as it ranges over 
the values of
binary quartic forms that are reducible over $\Q$.
\end{altabstract}

\subjclass{11N37}

\date{\today}
\maketitle

\tableofcontents

\section{Introduction} 
Soient $L_1,L_2\in \Z[\x]$ deux formes lin\'eaires non 
proportionnelles  o\`u $\x=(x_1,x_2)$,   $Q\in
\Z[\x]$ une forme quadratique irr\'eductible sur $\Q $ et  $\mcal{R} 
$ une r\'egion
born\'ee convexe de~$\R^2$. Nous nous proposons d'estimer 
asymptotiquement lorsque $X$ tend vers $+\infty$
la somme
$$
   T(X)
=\sum_{\x\in \Z^2\cap
X\mcal{R}}\tau(L_1(\x) L_2(\x) Q(\x)),
$$
o\`u $X\mcal{R}:= \{X\x: \x\in \mcal{R}\}.$
  Ici,
$\tau$ d\'esigne la fonction nombre de diviseurs.

Cette estimation r\'esultera de l'\'etude de la somme
$$S(X):=S(X;L_1,L_2,Q,\mcal{R})=\sum_{\x\in \Z^2\cap
X\mcal{R}}\tau(L_1(\x))\tau(L_2(\x))\tau(Q(\x)).
$$
Ce travail a certains points communs avec \cite{4linear} o\`u nous 
g\'en\'eralisions le
travail de Heath-Brown \cite{h-b03} concernant les sommes
\begin{equation}\label{sumR}
\sum_{\x\in \Z^2\cap X\mcal{R}}r(L_1(\x))r(L_2(\x))r(L_3(\x))
r(L_4(\x)),
\end{equation}
lorsque les $L_i$ sont des formes lin\'eaires non proportionnelles 
deux \`a deux. Ici la quantit\'e $r(n)$ d\'esigne le nombre de repr\'esentations de $n$
comme une somme de deux carr\'es.

\goodbreak

Dans toute la suite, nous ferons sur les formes $L_1$, $L_2$, $Q$ et 
$\mcal{R}$ les hypoth\`eses
minimales suivantes :
\begin{enumerate}
\item[(H1)]
$\mcal{R}$ est un ouvert born\'e, convexe avec une fronti\`ere 
d\'efinie par une fonction
continuement diff\'erentiable par morceaux,
\item[(H2)]
   $L_1$ et $L_2$ ne sont pas proportionnelles et $Q$ est 
irr\'eductible sur $\Q$,  
\item[(H3)]
$L_i(\x)>0$ et $Q(\x)>0$ pour tout  $\x
\in \mcal{R}$,
\end{enumerate} et nous noterons
\begin{equation}\begin{split}
L_1(\x)&:=a_1x_1+b_1x_2,\\
L_2(\x)&:=a_2x_1+b_2x_2,
\\ Q(\x)&:=a_3x_1^2+b_3x_2^2+c_3x_1x_2,
\end{split}\label{defforme}\end{equation}
avec 
\begin{equation}\label{defDelta}
\Delta={\rm disc}( Q)=c_3^2-4a_3b_3.\end{equation}\goodbreak

Nous introduisons les notations, lorsque $\ma{d}=(d_1,d_2,d_3)\in\N^3$,
\begin{equation}\label{eq:farm} 
\mathsf{\Lambda}({\ma{d}}):=  \{ \x \in \Z^2 : d_i \mid L_i(\x), ~ 
d_3\mid Q(\x)\} 
\end{equation}
et
  \begin{equation}
\rho(\bd)=\rho(\bd;L_1,L_2,Q)
  := \# \big(\mathsf{\Lambda}({\ma{d}})\cap  [0,d_1d_2d_3)^2 \big).
  \label{defrho}
\end{equation}

Le r\'esultat principal de notre article est le suivant.
\begin{thm}\lab{tht}
Soit $\varepsilon>0$. Lorsque  $L_1, L_2,Q,\mcal{R}$ satisfont 
(H1)--(H3) et lors\-que $X\geq 2$,
on  a
\begin{equation*}
T(X) = 2C \vol(\mcal{R}) X^2(\log    X)^3 + O\big( X^2 (\log 
X)^{2+\varepsilon} \big), 
\end{equation*}
o\`u la constante implicite dans le $O$ d\'epend 
de $\ve$, des formes lin\'eaires $L_1, L_2,Q$ et de la borne
sup\'erieure de $\mcal{R}$ et o\`u
$$C:=\prod_p\Big(1-\frac{1}{p}\Big)^3\Big(1+\sum_{ \nu  \in\N} 
\frac{\rho(1, p^\nu, 1)
+ \rho(p^\nu  , 1, 1)+ \rho(1  , 1,
p^\nu  )}{p^{2\nu}}\Big).$$
\end{thm}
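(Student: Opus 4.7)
Le plan est le suivant. L'\'etape cl\'e est une identit\'e multiplicative locale : \`a chaque premier $p$ et pour $\x$ tel que $L_1(\x) L_2(\x) Q(\x) \neq 0$, on a $v_p(L_1 L_2 Q) = v_p(L_1) + v_p(L_2) + v_p(Q)$, d'o\`u, par d\'eveloppement des indicateurs $v_p(L_i) = \sum_{\nu \geq 1} \mathbf{1}_{p^\nu \mid L_i}$ et multiplication sur les premiers,
\begin{equation*}
\tau(L_1 L_2 Q)(\x) = \sum_{\substack{\bd = (d_1, d_2, d_3) \in \N^3 \\ \hcf(d_i, d_j) = 1,\; i \neq j}} \mathbf{1}_{d_1 \mid L_1(\x)} \, \mathbf{1}_{d_2 \mid L_2(\x)} \, \mathbf{1}_{d_3 \mid Q(\x)}.
\end{equation*}
V\'erification locale : avec $a = v_p(L_1)$, $b = v_p(L_2)$, $c = v_p(Q)$, le membre de gauche vaut $a+b+c+1$ et le membre de droite compte le triplet nul plus les triplets \`a une seule composante non nulle, soit $1+a+b+c$. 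Par \'echange des sommations, il vient
\begin{equation*}
T(X) = \sum_{\substack{\bd \in \N^3 \\ \text{deux \`a deux premiers}}} \#\bigl(\mathsf{\Lambda}(\bd) \cap X\mcal{R}\bigr),
\end{equation*}
somme analogue \`a $S(X)$ mais restreinte aux $\bd$ de coordonn\'ees deux \`a deux premi\`eres.

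On applique alors un comptage standard des points du r\'eseau $\mathsf{\Lambda}(\bd) \subset \Z^2$ (de covolume $(d_1 d_2 d_3)^2/\rho(\bd)$) :
\begin{equation*}
\#\bigl(\mathsf{\Lambda}(\bd) \cap X\mcal{R}\bigr) = \vol(\mcal{R}) \frac{X^2 \rho(\bd)}{(d_1 d_2 d_3)^2} + O\Bigl(\tfrac{X}{\lambda_1(\bd)} + 1\Bigr),
\end{equation*}
ce qui ram\`ene le probl\`eme \`a l'\'evaluation de $\Sigma(X) := \sum \rho(\bd)/(d_1 d_2 d_3)^2$ sur les $\bd$ deux \`a deux premiers avec $d_1, d_2 \ll X$ et $d_3 \ll X^2$. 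Gr\^ace \`a la coprimalit\'e, la s\'erie de Dirichlet
\begin{equation*}
F(s_1, s_2, s_3) := \sum_{\bd \text{ deux \`a deux premiers}} \frac{\rho(\bd)}{d_1^{2+s_1} d_2^{2+s_2} d_3^{2+s_3}}
\end{equation*}
admet un produit eul\'erien dont chaque facteur local, \`a un premier $p$, ne fait intervenir que les triplets axiaux $(p^\nu, 1, 1)$, $(1, p^\nu, 1)$ et $(1, 1, p^\nu)$ --- structure qui correspond exactement \`a la d\'efinition de la constante $C$. Apr\`es factorisation des singularit\'es (modulo d'\'eventuels facteurs de fonctions $L$ de Dirichlet attach\'ees au discriminant $\Delta$), on \'ecrit
$F(s_1, s_2, s_3) = \zeta(1+s_1) \zeta(1+s_2) \zeta(1+s_3) \, G(s_1, s_2, s_3)$
avec $G$ holomorphe pr\`es de $(0,0,0)$ et $G(0, 0, 0) = C$. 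Un calcul de r\'esidu au triple p\^ole (m\'ethode de Selberg-Delange ou Perron) donne alors
\begin{equation*}
\Sigma(X) = C (\log X)(\log X)(\log X^2) + O\bigl((\log X)^{2+\ve}\bigr) = 2C (\log X)^3 + O\bigl((\log X)^{2+\ve}\bigr),
\end{equation*}
le facteur $2$ provenant de $\log(X^2) = 2 \log X$, c'est-\`a-dire du fait que $d_3$ peut atteindre $X^2$ alors que $d_1, d_2$ sont born\'es par $X$.

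L'obstacle principal sera le contr\^ole de l'erreur agr\'eg\'ee $\sum_{\bd} O(X/\lambda_1(\bd) + 1)$, qu'il faut mener avec une pr\'ecision suffisante pour obtenir $O(X^2 (\log X)^{2+\ve})$. Ceci n\'ecessite une dissection dyadique sur $\bd$, des estimations uniformes sur les premiers minima successifs $\lambda_1(\bd)$ tenant compte des coefficients des formes et du discriminant $\Delta$, et un contr\^ole fin de la moyenne de $\rho(\bd)$ sur des intervalles dyadiques. C'est pr\'ecis\'ement pour \'etablir ces estimations techniques que l'on \'etudie en premier la somme plus g\'en\'erale $S(X) = \sum_{\x} \tau(L_1)\tau(L_2)\tau(Q)$, correspondant \`a la sommation sur \emph{tous} les $\bd$ sans restriction de coprimalit\'e : elle englobe $T(X)$ par l'identit\'e ci-dessus et fournit le cadre g\'eom\'etrique n\'ecessaire pour traiter uniform\'ement les $\bd$ avec $d_3$ grand, o\`u la forme quadratique $Q$ peut d\'eformer consid\'erablement la g\'eom\'etrie de $\mathsf{\Lambda}(\bd)$ selon la ramification dans $\Q(\sqrt{\Delta})$.
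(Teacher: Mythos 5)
Your opening identity is correct, and it is in fact a cleaner combinatorial starting point than the one used in the paper: the paper instead proves a signed splitting formula (Lemme \ref{lemsplit}) expressing $\tau(n_1n_2n_3)$ as a M\"obius-weighted combination of products $\tau(n_1/\cdot)\tau(n_2/\cdot)\tau(n_3/\cdot)$, precisely so as to reduce $T(X)$ to the sums $S(X,\ma{d},\ma{D})$ estimated in Th\'eor\`eme \ref{maind}. Your identification of the Euler factors of the constant (only the ``axial'' triples survive the pairwise coprimality) is also correct and does match $C$.

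The genuine gap is the error analysis, which is the entire technical content of the theorem and which your sketch does not address correctly. Summing the pointwise estimate $\#(\mathsf{\Lambda}(\bd)\cap X\mcal{R})=\vol(\mcal{R})X^2\rho(\bd)/(d_1d_2d_3)^2+O(X/\lambda_1(\bd)+1)$ over all admissible $\bd$ (with $d_1,d_2\ll X$ and $d_3\ll X^2$) produces, from the $O(1)$ terms alone, an error of order $X^4$: no dyadic dissection or refinement of the bound on $\lambda_1(\bd)$ can repair this, because the number of triples is simply too large. The indispensable tool, absent from your proposal, is divisor switching: one must replace $d_i$ by the complementary divisor $L_i(\x)/d_i$ whenever $d_i>\sqrt{L_i(\x)}$ (and likewise for $d_3$ relative to $Q(\x)$) so as to restrict to $d_1,d_2\ll (r'X)^{1/2}$, and then invoke a level-of-distribution estimate \emph{on average} over $\bd$ (Marasingha's result, Lemme \ref{LOD}) rather than the individual count. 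Worse, divisor switching is incompatible with your pairwise-coprimality constraint: the complementary divisor $L_1(\x)/d_1$ has no reason to be coprime to $d_2$ or $d_3$, so the restricted sum you arrive at cannot be attacked directly by the hyperbola method; removing the coprimality by M\"obius inversion over the pairwise gcd's leads you back, essentially, to Lemme \ref{lemsplit} and to the sums $S(X,\ma{d},\ma{D})$, i.e.\ to Th\'eor\`eme \ref{maind}, whose proof is the real substance here. Two further inaccuracies: $\mathsf{\Lambda}(\bd)$ is \emph{not} a lattice (the paper stresses this after Th\'eor\`eme \ref{main0}; it is a union of $\rho(\bd)$ residue classes modulo $d_1d_2d_3$, and reducing to genuine lattices is itself a nontrivial step), so the covolume/first-minimum bound you quote does not apply as stated; and your Perron/Selberg--Delange evaluation of $\Sigma(X)$ treats the ranges of $d_1,d_2,d_3$ as independent rectangles, which they are not --- after switching, the boundary range for $d_3$ forces a restriction to the subregion $|Q(\x)|>r'd_3/X$ and the volume estimate \eqref{majvol}.
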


Ce probl\`eme s'inscrit dans le cadre plus g\'en\'eral de 
l'estimation de la somme
$$  T_F(X)=\sum_{\x\in \Z^2\cap X\mcal{R}}\tau(F(\x))
$$
lorsque $F$ est une forme binaire de degr\'e $4$ de $\Z[\x]$.
Lorsque $F$ est irr\'eductible sur $\R$, Daniel \cite{D99} a montr\'e
$$T_F(X) =4C_F \vol(\mcal{R})X^2 \log X+O(X^2 \log\log X)$$
lorsque $\mcal{R}=\{ \x\in\R^2\,:\, |f(\x)|\leq 1\}$ et
$$C_F=\prod_{p}\Big(1-\frac{1}{p}\Big) \sum_{ \nu  \in\Z_{\geq 
0}}\frac{\rho_F(p^\nu)}{p^{2\nu}},$$
avec $ \rho_F( d ):=\#\{ \x \in [0,d)^2 : d \mid F(\x)\} .$

Nous sommes convaincus que les m\'ethodes d\'evelopp\'ees dans cet 
article seront  utiles pour \'etablir
une estimation asymptotique de $T_F(X)$ lorsque $F$ est une forme 
binaire  de degr\'e $4$ de $\Z[\x]$
qui n'est pas trait\'ee par notre r\'esultat ou celui de Daniel.

\begin{ack}
Une partie de ce travail a \'et\'e r\'ealis\'ee lors de la venue du 
deuxi\`eme auteur \`a l'Universit\'e
Paris 7--Denis Diderot et \`a l'Universit\'e
Paris 6--Pierre et Marie Curie. Que ces institutions soient ici 
chaleureusement remerci\'ees pour leur
accueil et leur soutien financier. Le deuxi\`eme auteur b\'en\'eficie 
du soutien de la bourse  EPSRC
num\'erot\'ee \texttt{EP/E053262/1}.
\end{ack}

\section{M\'ethode et autres r\'esultats}

Pour \'etablir le Th\'eor\`eme \ref{tht}, nous estimons des sommes 
g\'en\'eralisant $S(X)$ en pr\'e\-tant
une attention particuli\`ere \`a l'uniformit\'e par rapport aux 
diff\'erents  param\`etres.
  Pour cela, nous introduisons les
quantit\'es suivantes
\begin{equation}
   \label{eq:Linf}
L_\infty=L_\infty(L_1,L_2, Q):=\max\{\|L_1\|,\|L_2\|,\|Q\|\}
\end{equation}
o\`u $\|{\, \cdot \,}\|$ d\'esigne le maximum  
de la valeur absolue des coefficients de la forme consid\'er\'ee, et
\begin{align}
   \label{eq:rinf}
r_\infty&=r_\infty(\mcal{R}):=\sup_{\x\in\mcal{R}}\max\{|x_1|,|x_2|\},
\\
   \label{eq:r'}
r'&=r'(L_1,L_2,Q,\mcal{R}):=\sup_{\x\in\mcal{R}}\max\{|L_1(\ma{x})|,|L_2(\ma{x})|,\sqrt{|Q(\ma{x})|}\}.
\end{align}

Dans toute la suite, nous consid\'ererons, pour des ensembles $V   $ 
de $\R^3$ d\'efinis comme une
intersection finie d'hyperplans \`a coefficients born\'es, les sommes 
d\'efinies par  
\begin{align*}
S(X;V   )&:=S(X;L_1,L_2,Q,\mcal{R},V)=\sum_{\x\in \Z^2\cap
X\mcal{R}}
\tau(L_1(\x),L_2(\x),Q(\x);V   ),
\end{align*}
avec, lorsque $V   \subseteq [0,1  ]^3$,
$$\tau(L_1(\x),L_2(\x),Q(\x);V   ):=\#\left\{ \ma{d}\in\N^3 :
\!\!\begin{array}{l}  d_i\mid L_i(\x),\quad d_3\mid Q(\x) \\ {}\\ 
\Big(\frac{\log d_1}{\log
r'X},\frac{\log d_2}{\log r'X},\frac{\log d_3}{2\log r'X}\Big)\!\!\in 
V   \end{array}\right\}.$$
La d\'ependance de $\tau(L_1(\x),L_2(\x),Q(\x);V   )$ \`a $X$ et 
$\mcal{R}$ sera toujours omise.
Lorsque $V=[0,1  ]^3$, nous avons 
$S(X,V)=S(X).$

\begin{thm}\lab{main0}
Soit $\varepsilon>0$. Lorsque  $L_1, L_2,Q,\mcal{R}$ satisfont (H1)--(H3),
  $V   $ est un sous-ensemble de $[0,1]^3$ d\'efini comme une
intersection finie d'hyperplans \`a coefficients born\'es et
  $r'X^{1-\varepsilon }\geq 1$, on a
\begin{equation}
  \begin{split}
S (X;V)
=& 
2 \vol(\mcal{R})\vol(V   )X^2(\log r'X)^3 \prod_{p}\sigma_p \\
& + O\Big( L_\infty^{ \varepsilon} (r_\infty r'+r_\infty^2) X^2 (\log X)^{2}
\log\log X\Big),
 \end{split}
  \label{estS(X)}
\end{equation}
o\`u
\begin{equation}
    \label{defsigma}
\sigma_p :=\Big(1-\frac{1}{p}\Big)^3
\sum_{\mnu\in\Z_{\geq 0}^3} \frac{\rho
(p^{\nu_1},p^{\nu_2},p^{\nu_3})}{p^{2\nu_1+2\nu_2+2\nu_3}}.
\end{equation}
\end{thm}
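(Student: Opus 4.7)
Le plan consiste d'abord \`a \'echanger l'ordre de sommation :
\[
S(X;V) = \sum_{\bd \in \mcal{D}(V)} \#\bigl(\ld \cap X\mcal{R}\bigr),
\]
o\`u $\mcal{D}(V)$ d\'esigne l'ensemble des $\bd \in \N^3$ tels que $\bigl(\frac{\log d_1}{\log r'X}, \frac{\log d_2}{\log r'X}, \frac{\log d_3}{2\log r'X}\bigr)\in V$. Par construction, $\ld \subseteq \Z^2$ est un r\'eseau de d\'eterminant $(d_1d_2d_3)^2/\rho(\bd)$. Un argument classique de comptage de points dans un convexe fournit
\[
\#(\ld \cap X\mcal{R}) = \frac{\vol(\mcal{R}) \rho(\bd) X^2}{(d_1d_2d_3)^2} + O\!\left( 1 + \frac{r_\infty X}{\lambda_1(\ld)}\right),
\]
o\`u $\lambda_1(\ld)$ d\'esigne le premier minimum successif du r\'eseau.

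L'\'etape suivante consiste \`a isoler le terme principal en se restreignant aux divisants mod\'er\'es, disons $d_1d_2d_3 \leq (r'X)^{2-\ve}$. La somme des termes principaux se ram\`ene alors \`a l'\'evaluation asymptotique de
\[
\sum_{\bd \in \mcal{D}(V)}\frac{\rho(\bd)}{(d_1d_2d_3)^2},
\]
qu'on obtient par sommation de Perron multiple appliqu\'ee \`a la s\'erie de Dirichlet
\[
F(s_1,s_2,s_3) := \sum_{\bd\in\N^3} \frac{\rho(\bd)}{d_1^{2+s_1}d_2^{2+s_2}d_3^{2+s_3}},
\]
laquelle se factorise sous la forme $\zeta(1+s_1)\zeta(1+s_2)\zeta(1+s_3) H(s_1,s_2,s_3)$ avec $H$ holomorphe et non nulle au voisinage de $\bo$. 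La multiplicativit\'e (ou la quasi-multiplicativit\'e, apr\`es isolement des places exceptionnelles divisant $\Delta$ et les r\'esultants $\Res(L_i,Q)$) de $\rho$ fournit un produit eul\'erien, et un calcul direct identifie $H(\bo) = \prod_p \sigma_p$. Le facteur $2$ devant le terme principal provient de ce que $d_3$ parcourt une \'echelle logarithmique de longueur $2\log(r'X)$ (contre $\log(r'X)$ pour $d_1$ et $d_2$) dans la normalisation d\'efinissant $V$, ce qui double, via le changement de variables $u_i=\log d_i/\log(r'X)$, $u_3=\log d_3/(2\log r'X)$, le volume effectif.

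Il reste \`a contr\^oler les diff\'erentes contributions d'erreur : les termes $O(1 + r_\infty X/\lambda_1(\ld))$ pour les $\bd$ mod\'er\'es, et la contribution totale des grands $\bd$, qu'on majore \`a l'aide d'une borne de type grand crible ou par une majoration simple de type divisor-bound sur $\rho$. L'obstacle principal sera l'estimation uniforme (en $L_\infty$, $r_\infty$, $r'$) du nombre de triplets $\bd$ pour lesquels $\ld$ poss\`ede un premier minimum petit ; cela se ram\`ene \`a une analyse fine du comportement de $\rho(1,1,p^\nu)$, li\'ee \`a la fa\c{c}on dont $Q$ se factorise modulo $p^\nu$ et aux r\'esultants de $L_1, L_2$ avec $Q$. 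La pr\'esence de la forme quadratique $Q$ complique significativement cette analyse par rapport au cas purement lin\'eaire trait\'e dans \cite{4linear}, et c'est l\`a que doit s'absorber la perte logarithmique $\log\log X$ apparaissant dans le terme d'erreur.
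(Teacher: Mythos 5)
Your outline founders on the very point the paper singles out as the crux: the set $\mathsf{\Lambda}(\bd)=\{\x\in\Z^2:\ d_i\mid L_i(\x),\ d_3\mid Q(\x)\}$ is \emph{not} a lattice. Because $Q$ is an irreducible quadratic, the residues $\x \bmod d_3$ with $d_3\mid Q(\x)$ form a union of many distinct lattices (roughly $\rho(1,1,d_3)/d_3$ of them), so the count $\#(\mathsf{\Lambda}(\bd)\cap X\mcal{R})=\vol(\mcal{R})X^2\rho(\bd)/(d_1d_2d_3)^2+O(1+r_\infty X/\lambda_1)$ with a single first minimum is simply not available, and $(d_1d_2d_3)^2/\rho(\bd)$ is a density, not a determinant. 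Even after decomposing into genuine lattices, a pointwise error of this shape summed over all admissible $\bd$ cannot be controlled; what is needed (and what the paper imports from Daniel and Marasingha) is a level-of-distribution statement bounding $\sum_{d_i\le V_i}\bigl|\#(\mathsf{\Lambda}(\bd)\cap X\mcal{R}_{\bd})-\vol(\mcal{R}_{\bd})X^2\rho(\bd)/(d_1d_2d_3)^2\bigr|$ \emph{on average} by $L_\infty^{\varepsilon}(r_\infty X\sqrt{V}+V)(\log V)^{A}$. This averaged input, together with the reduction to lattices underlying it, is the essential missing step.

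Second, you cannot restrict to $d_1d_2d_3\le (r'X)^{2-\varepsilon}$ and dismiss the complementary range as an error: divisors $d_1>\sqrt{L_1(\x)}$ (and likewise for $d_2$ and $d_3$) carry a positive proportion of $\tau(L_i(\x))$, hence of the main term. The paper instead switches each large divisor to its complementary divisor (hyperbola method); this transforms the $V$-condition ($\delta_i\mapsto 1-\delta_i$) and the two pieces recombine as $\vol(V\cap[0,\frac12]^3)+\vol(V'\cap[0,\frac12]^3)=\vol(V)$. Moreover the cut must be taken at $Y=(r'X)^{1/2}/(\log X)^{C}$, strictly below $\sqrt{L_1(\x)}$, so that the total modulus stays within the level of distribution, and the middle range $Y<d_1<L_1(\x)/Y$ has to be bounded separately by a Nair--Shiu type estimate --- this is where the factor $\log\log X$ in the error actually originates, not in the local analysis of $\rho(1,1,p^{\nu})$. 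Your Perron-type evaluation of $\sum_{\bd}\rho(\bd)/(d_1d_2d_3)^2$ could in principle replace the paper's elementary convolution lemma for the main term (it requires the same local estimates on $\rho$ to get uniformity in $L_\infty$), but it repairs neither of the two gaps above.
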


L'ensemble \eqref{eq:farm}  
n'est pas un r\'eseau.
L'\'etape essentielle de la preuve de cette estimation est de se ramener \`a un
comptage sur des r\'eseaux de $\Z^2$ en s'inspirant de l'important article
\cite{D99}.

Le Th\'eor\`eme \ref{main0} nous permet d'estimer des sommes plus 
g\'en\'erales de
la forme
\begin{align*}
   S(X,\ma{d},\ma{D};V)  =& 
\,\,S(X,\ma{d},\ma{D};L_1,L_2,Q,\mcal{R},V)\\   :=&\sum_{\x\in
\mathsf{\Lambda}({\ma{D}})\cap 
X\mcal{R}}\tau\Big(\frac{L_1(\x)}{d_1},\frac{L_2(\x)}{d_2},\frac{Q(\x)
}{d_3};V\Big) ,
\end{align*}
lorsque $\ma{d},\ma{D}\in\N^3$ tels que $d_i\mid D_i$.

Consid\'erons
\begin{equation}
    \label{defsigmad}
\sigma_p(\ma{d},\ma{D}) :=\Big(1-\frac{1}{p}\Big)^3
\sum_{\mnu\in\Z_{\geq 0}^3}\frac{\rho
(p^{N_1},p^{N_2},p^{N_3})
}{p^{2N_1+2N_2+2N_3}},
\end{equation}
avec \begin{equation}\label{defNi}\lambda_i=v_p(d_i),\quad \mu_i=v_p(D_i),\quad
N_i =\max\{ \mu_i,\nu_i+\lambda_i  \} .
\end{equation}
Soit $\delta(\ma{D})$ le plus grand entier $\delta$ tel que
\begin{equation}\label{defdelta}
\mathsf{\Lambda}({\ma{D}})\subseteq\{ \x\in\Z^2\,:\quad \delta\mid
(x_1,x_2)\} ,\end{equation}
o\`u $(x_1,x_2)$ d\'esigne, ici et dans toute la suite, le pgcd des 
entiers $x_1$ et $x_2$.
Lorsque  $L_1, L_2,Q  $ des formes satisfont (H2), nous \'ecrirons
$\ell_1,\ell_2,q$ des entiers  et $L_1^*, L_2^*,Q^*  $ des formes 
primitives tels que
\begin{equation}\label{ecritprim}L_1=\ell_1L_1^*,\quad 
L_2=\ell_2L_2^*,\quad Q  =qQ^*,
\end{equation}
et utilisons la notation 
\begin{equation}\label{defD'}
\ma{D'}:=\Big(\frac{D_1}{(D_1,\ell_1)},\frac{D_2}{(D_2,\ell_2)},
\frac{D_3}{(D_3,q)}\Big).
\end{equation}
Nous emploierons syst\'ematiquement la notation
$$D:=D_1D_2D_3 $$ et 
\begin{equation}\label{defdisc}\begin{split}
\Delta_{12}&:= \Res (L_1,L_2)=a_1b_2-a_2b_1,\\
\Delta_{i3}&:= \Res 
(L_i,Q)=a_3b_i^2+b_3a_i^2-c_3a_ib_i=Q(-b_i,a_i).\end{split} 
\end{equation}
  Comme $Q$ est irr\'eductible,
nous avons $\Delta_{i3}\neq 0$. De plus, comme $L_1$ et $L_2$ ne sont 
pas proportionnelles, on a
$\Delta_{12}\neq 0$.
Nous notons aussi
\begin{equation}\label{def1aDD}
a(\ma{D},\mDelta):=
(D_1,\Delta_{12}) (D_2,\Delta_{12})(D_3,\Delta (\Delta_{13},\Delta_{23})), 
\end{equation} 
avec    $\mDelta  :=(\Delta, \Delta_{12},
\Delta_{13}, \Delta_{23})$ et $\Delta$ introduit en \eqref{defDelta}.
Enfin, nous posons
\begin{equation}
a'(\ma{D},\mDelta):=
(D_1',\Delta'_{12}) (D_2',\Delta'_{12})(D_3,\Delta' (\Delta'_{13},\Delta'_{23})),
\label{defaDD}
\end{equation}
o\`u $\mDelta'  :=(\Delta', \Delta'_{12},
\Delta'_{13}, \Delta'_{23})= (\Delta/q^2, \Delta_{12}/\ell_1\ell_2,
\Delta_{13}/\ell_1^2q, \Delta_{23}/\ell_2^2q).$ Autrement $a'$ prend la valeur
$a$ apr\`es avoir rendu les formes $L_1, L_2,Q  $  primitives.
\`A partir du Th\'eor\`eme \ref{main0}, nous d\'emontrerons le 
r\'esultat suivant.

\begin{thm}\lab{maind}
Soit $\varepsilon>0$. Lorsque  $L_1, L_2,Q,\mcal{R}$ satisfont (H1)--(H3),
  $V   $ est un sous-ensemble de $[0,1]^3$ d\'efini comme une
intersection finie d'hyperplans \`a coefficients born\'es,  $\ma{d}$ 
et $\ma{D}$ tels que
$d_i\mid D_i$ et
$r' X^{1-\varepsilon }\geq 1$, on a
\begin{align*}
S (X,\ma{d},\ma{D};V)
=& 2 \vol(\mcal{R})\vol(V)
  X^2(\log r'X)^3 \prod_{p}\sigma_p(\ma{d},\ma{D})
  \\& +
O\Big(   \frac{(DL_\infty)^{\varepsilon}}{\delta(\ma{D})}
a'(\ma{D},\mDelta)(r_\infty
r'  +r_\infty^2)    X^2 (\log X)^{2} \log\log X\Big).  
\end{align*}
   De plus, on a  
$\prod_p \sigma_p (\ma{d},\ma{D})  \ll L_\infty^{ \varepsilon}D^{\varepsilon
}a'(\ma{D},\mDelta).$
\end{thm}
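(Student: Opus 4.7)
L'id\'ee est de d\'eduire le Th\'eor\`eme \ref{maind} du Th\'eor\`eme \ref{main0} en d\'ecomposant $\mathsf{\Lambda}(\mathbf{D})$ en une r\'eunion finie de translat\'es de sous-r\'eseaux de $\mathbb{Z}^2$, puis en travaillant dans des coordonn\'ees adapt\'ees sur chacun. On se ram\`ene d'abord au cas des formes primitives : en \'ecrivant $L_i = \ell_i L_i^*$ et $Q = q Q^*$ comme en \eqref{ecritprim}, les conditions $D_i \mid L_i(\mathbf{x})$ et $D_3 \mid Q(\mathbf{x})$ deviennent $D_i' \mid L_i^*(\mathbf{x})$ et $D_3' \mid Q^*(\mathbf{x})$ avec $\mathbf{D}'$ donn\'e par \eqref{defD'}, ce qui explique la pr\'esence de $a'$ au lieu de $a$ dans le terme d'erreur. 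Les deux conditions lin\'eaires d\'efinissent ensuite un sous-r\'eseau $\Lambda_0 \subseteq \mathbb{Z}^2$ de d\'eterminant $\asymp D_1' D_2'/(D_1', D_2', \Delta_{12}')$, et la condition quadratique $D_3' \mid Q^*(\mathbf{x})$ raffine cette d\'ecomposition en une r\'eunion disjointe de translat\'es $\mathbf{y}_j + \Lambda_j$.

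Sur chaque $\Lambda_j$, on choisit une base $(\mathbf{e}_1^{(j)}, \mathbf{e}_2^{(j)})$ r\'eduite au sens de Minkowski et l'on pose $\mathbf{x} = \mathbf{y}_j + u_1 \mathbf{e}_1^{(j)} + u_2 \mathbf{e}_2^{(j)}$. Comme $d_i \mid D_i$ et $\mathbf{x} \in \mathsf{\Lambda}(\mathbf{D})$, les quotients $L_i(\mathbf{x})/d_i$ et $Q(\mathbf{x})/d_3$ sont entiers et d\'efinissent de nouvelles formes int\'egrales $\tilde L_1^{(j)}, \tilde L_2^{(j)}, \tilde Q^{(j)}$ en la variable $\mathbf{u}$, v\'erifiant les hypoth\`eses (H1)--(H3) sur la r\'egion convexe $\tilde{\mathcal{R}}^{(j)}$ image r\'eciproque de $\mathcal{R}$. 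On applique alors le Th\'eor\`eme \ref{main0} \`a chaque morceau puis l'on somme sur $j$ : les termes principaux se recombinent via $\sum_j \vol(\tilde{\mathcal{R}}^{(j)})/\det(\Lambda_j) = \vol(\mathcal{R})$, et un calcul local premier par premier montre que la somme sur les classes de r\'esidus des facteurs locaux issus du Th\'eor\`eme \ref{main0} reproduit $\sigma_p(\mathbf{d}, \mathbf{D})$ avec les exposants d\'ecal\'es $N_i = \max\{\mu_i, \nu_i + \lambda_i\}$ de \eqref{defNi}.

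L'obstacle principal est le contr\^ole de l'erreur accumul\'ee. Chaque application du Th\'eor\`eme \ref{main0} produit une erreur d\'ependant de $L_\infty(\tilde L_1^{(j)}, \tilde L_2^{(j)}, \tilde Q^{(j)})$, $r_\infty(\tilde{\mathcal{R}}^{(j)})$ et $r'(\tilde{\mathcal{R}}^{(j)})$, quantit\'es toutes de l'ordre des longueurs de la base de $\Lambda_j$. En utilisant $\|\mathbf{e}_i^{(j)}\| \ll \sqrt{\det \Lambda_j}$, en bornant le nombre de translat\'es \`a l'aide de $\rho(D_1', D_2', D_3')$, et en observant que $\mathsf{\Lambda}(\mathbf{D}) \subseteq \delta(\mathbf{D}) \mathbb{Z}^2$ par \eqref{defdelta}, la somme totale des erreurs se majore par $\delta(\mathbf{D})^{-1} a'(\mathbf{D}, \mDelta)(r_\infty r' + r_\infty^2) X^2 (\log X)^2 \log\log X$ \`a un facteur $(DL_\infty)^\varepsilon$ pr\`es, conform\'ement au r\'esultat annonc\'e.

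Pour la borne finale $\prod_p \sigma_p(\mathbf{d}, \mathbf{D}) \ll L_\infty^\varepsilon D^\varepsilon a'(\mathbf{D}, \mDelta)$, on proc\`ede par analyse locale. Pour $p \nmid L_\infty D \cdot \mDelta$, le lemme de Hensel fournit la majoration uniforme $\rho(p^{\nu_1}, p^{\nu_2}, p^{\nu_3}) \ll p^{\nu_1 + \nu_2 + \nu_3}$, ce qui assure la convergence du produit eul\'erien vers une quantit\'e de taille $O(L_\infty^\varepsilon D^\varepsilon)$. Pour les nombres premiers exceptionnels divisant $\mDelta$ ou $L_\infty D$, on majore $\sigma_p(\mathbf{d}, \mathbf{D})$ par les parties $p$-locales des facteurs $(D_1', \Delta_{12}')(D_2', \Delta_{12}')(D_3, \Delta'(\Delta_{13}', \Delta_{23}'))$ d\'efinissant $a'(\mathbf{D}, \mDelta)$ en \eqref{defaDD}, chacun capturant pr\'ecis\'ement la densit\'e locale exc\'edentaire en un nombre premier ramifi\'e.
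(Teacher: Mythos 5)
Your overall strategy --- reduce to primitive forms, convert the congruence set $\mathsf{\Lambda}(\ma{D})$ into a bounded number of lattice counts, apply Theorem \ref{main0} in adapted coordinates, and recombine the local factors into $\sigma_p(\ma{d},\ma{D})$ --- is the right one, and your reduction to primitive forms and your outline of the bound for $\prod_p\sigma_p(\ma{d},\ma{D})$ are in the spirit of the paper. But the central step, the decomposition of $\mathsf{\Lambda}(\ma{D}')$ into a disjoint union of \emph{translates} $\y_j+\Lambda_j$ of sublattices, does not work as stated, for two reasons. First, Theorem \ref{main0} applies only to homogeneous forms: on a coset $\y_j+\Lambda_j$ with $\y_j\notin\Lambda_j$, the substitution $\x=\y_j+E\u$ turns $L_i(\x)/d_i$ and $Q(\x)/d_3$ into \emph{affine} (inhomogeneous) polynomials in $\u$, which are not of the shape required by (H2)--(H3), so Theorem \ref{main0} cannot be invoked on the pieces. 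Second, even setting this aside, the number of cosets in your decomposition is of the order of $\rho(D_1',D_2',D_3')$, which is generically of size $D'$ (for instance $\rho(p,1,1)=p$); since the error produced by Theorem \ref{main0} on a piece with reduced basis $(\ma{e}_1,\ma{e}_2)$ is at best of size $r_\infty r'X^2(\log X)^2\log\log X/|\ma{e}_1|$ and a skew lattice can have $|\ma{e}_1|\ll 1$ while $|\ma{e}_2|\asymp\det\Lambda_j$, the accumulated error exceeds the target by a factor of order $D$ rather than the claimed $(DL_\infty)^{\varepsilon}a'(\ma{D},\mDelta)$.

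The paper's proof circumvents both obstacles at once: after extracting the part $b\mid\psi(\ma{D}')$ of $(x_1,x_2)$ dividing $D'$, it partitions the primitive points of $\mathsf{\Lambda}^*(\ma{D}'')$ into \emph{projective} classes $\x\equiv\lambda\y_0\ (\mathrm{mod}\ D'')$. Each class, being stable under scalar multiplication, generates a genuine rank-two lattice $G(\cA)$ containing the origin, of determinant $D''$, so that $L_i(E\v)$ remains a homogeneous form in $\v$ and Theorem \ref{main0} applies (the residual coprimality $(\x,D'')=1$ being removed by a M\"obius sum over $e\mid D''$); and the number of classes is only $\#\cV(\ma{D})\le 8^{\omega(D_1D_2D_3)}a(\ma{D},\mDelta)$, which is precisely what produces the factor $(DL_\infty)^{\varepsilon}a'(\ma{D},\mDelta)$ in the error term. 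You would need to replace your coset decomposition by this projective one (or something equivalent) for the argument to go through; the identification of the resulting constant with $\prod_p\sigma_p(\ma{d},\ma{D})$, which you only assert, then requires the rather delicate local computation with the adapted basis $(\delta_1p^{m_1}\ma{e}_1,\delta_2p^{m_2}\ma{e}_2)$ carried out at the end of Section \ref{sectionmaind}.
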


La somme $S (X,\ma{d},\ma{D};V)$ est directement reli\'ee au cardinal   
des points \`a coor\-donn\'ees
enti\`eres sur la vari\'et\'e affine d'\'equation
\begin{equation}\label{defvar}
L_i(\x)=d_is_it_i,\qquad Q(\x)=d_3s_3t_3.
\end{equation}
La constante attendue dans le terme principal de  $S 
(X,\ma{d},\ma{D};V)$ peut \^etre  
interpr\'et\'ee comme un produit convenable de densit\'es locales 
avec  $\omega_\infty(\mcal{R},V)$ la
densit\'e archi\-m\'edienne associ\'ee  \`a la vari\'et\'e d\'efinie par
  \eqref{defvar}.
Cette quantit\'e sera explicitement d\'efinie en \eqref{defomega}.
  De la m\^eme mani\`ere que dans \cite{4linear} pour les sommes
  introduites en~\eqref{sumR}, nous montrons que le terme principal est
bien celui qui  est attendu.

Lorsque $\mal=(\lambda_1,\lambda_2, \lambda_3)\in\Z_{\geq 0}^3$, 
$\mmu=(\mu_1,\mu_2, \mu_3)\in\Z_{\geq
0}^3$ et $p$ un nombre premier, nous consid\'erons
$$
N_{\mal,\mmu}(p^n):=\#\left\{(\x,\ma{s},\ma{t})\in
(\Z/p^n\Z)^{8}: \begin{array}{l}
L_i(\x)\equiv p^{\lambda_i}s_it_i \smod{p^n}\\
Q(\x)\equiv p^{\lambda_3}s_3t_3 \smod{p^n}\\
p^{\mu_i} \mid L_i(\x),\, p^{\mu_3} \mid Q(\x)
\end{array}
\right\},
$$ et
\begin{equation*} 
\omega_{\mal,\mmu}(p):=\lim_{n\rightarrow 
\infty}p^{-5n-\la_1-\la_2-\la_3}N_{\mal,\mmu}(p^n).
\end{equation*}
Cette quantit\'e correspond \`a la densit\'e  $p$-adique associ\'ee 
\`a la vari\'et\'e d\'efinie par~\eqref{defvar}.

\begin{thm}\lab{constanteSd} Soient $L_1, L_2,Q,\mcal{R}$ satisfont 
(H1)--(H3) et  $\ma{d}$, $\ma{D}$  
tels que
$d_i\mid D_i$.  
On a $ \omega_\infty(\mcal{R},V)=2 \vol(\mcal{R})\vol(V)$ et
$\omega_{\mal,\mmu}(p)=\sigma_p(\ma{d},\ma{D})$,
pour tout $p$.
\end{thm}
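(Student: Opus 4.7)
The two assertions are independent and I would treat them in turn.

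For the archimedean density $\omega_\infty(\mcal{R},V)$, I would unfold the definition at \eqref{defomega} as a Leray/fiber integral over the real points of the affine variety \eqref{defvar}. The $5$-dimensional real variety is smoothly parametrized by $(\x,s_1,s_2,s_3) \in \R^2 \times \R_{>0}^3$, with $t_i = L_i(\x)/(d_is_i)$ and $t_3 = Q(\x)/(d_3s_3)$. The Leray forms associated with the three defining equations, after eliminating the $t_i$'s, produce $d\x \cdot \prod_i ds_i/(d_is_i)$; the factors $d_i$ are absorbed into the normalization built into the definition of $\omega_\infty$. The substitution $u_i := \log s_i/\log(r'X)$ for $i=1,2$ and $u_3 := \log s_3/(2\log r'X)$ maps the log-conditions of $V$ onto $\ma{u} \in V \subseteq [0,1]^3$ and turns each $ds_i/s_i$ into $\log(r'X)\,du_i$ (resp.\ $2\log(r'X)\,du_3$). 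The integrand factorises, giving $X^2\vol(\mcal{R}) \cdot 2(\log r'X)^3 \vol(V)$; dividing by the normalising $X^2(\log r'X)^3$ yields $\omega_\infty(\mcal{R},V) = 2\vol(\mcal{R})\vol(V)$.

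For the $p$-adic density, I would first sum over $(\ma{s},\ma{t})$ at fixed $\x \in (\Z/p^n\Z)^2$ subject to $p^{\mu_i}\mid L_i(\x)$, $p^{\mu_3}\mid Q(\x)$. Setting $m_i = n - \lambda_i$ and $c_i(\x) = L_i(\x)/p^{\lambda_i}$, the congruence $p^{\lambda_i} s_it_i \equiv L_i(\x) \pmod{p^n}$ is equivalent to $s_it_i \equiv c_i(\x) \pmod{p^{m_i}}$, so the number of such $(s_i,t_i) \in (\Z/p^n\Z)^2$ equals $p^{2\lambda_i} T_{m_i}(c_i(\x))$, where $T_m(c) := \#\{(s,t)\in(\Z/p^m\Z)^2 : st\equiv c \pmod{p^m}\}$. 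Grouping $(s,t)$ by $v_p(s)$ gives $T_m(c) = (v_p(c)+1)(p-1)p^{m-1}$ whenever $v_p(c) < m$, with a bounded edge case when $v_p(c) \geq m$. Substituting and normalising,
\begin{equation*}
\frac{N_{\mal,\mmu}(p^n)}{p^{5n+\lambda_1+\lambda_2+\lambda_3}} = \Big(1-\tfrac{1}{p}\Big)^3 \frac{1}{p^{2n}} \sum_{\x}\prod_{i=1}^{3}\bigl(v_p(L_i(\x))-\lambda_i+1\bigr) + o(1)
\end{equation*}
as $n\to\infty$, the $o(1)$ absorbing the edge case. Writing $(v_p(L_i(\x))-\lambda_i+1) = \sum_{\nu_i\geq 0}\mathbf{1}[v_p(L_i(\x)) \geq \lambda_i+\nu_i]$ and intersecting with the $p^{\mu_i}$-divisibilities, the inner constraint becomes $p^{N_i}\mid L_i(\x)$, $p^{N_3}\mid Q(\x)$ with $N_i$ as in~\eqref{defNi}. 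Exchanging $\lim_n$ and the sum over $\mnu$, the resulting local density is exactly $\rho(p^{N_1},p^{N_2},p^{N_3})/p^{2(N_1+N_2+N_3)}$, and summing over $\mnu \in \Z_{\geq 0}^3$ recovers $\sigma_p(\ma{d},\ma{D})$.

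The main technical obstacle is rigorously justifying the interchange of $\lim_{n\to\infty}$ with $\sum_\mnu$ and controlling the edge-case contribution where $v_p(L_i(\x)) \geq n$ for some $i$. Both require a summable decay bound on $\rho(p^{N_1},p^{N_2},p^{N_3})/p^{2(N_1+N_2+N_3)}$ in $\mnu$, uniform in $\mal,\mmu$. Such a bound follows from the standard uniform estimates on $\rho$ used throughout the paper (based on the primitivity of $L_1,L_2$ and the irreducibility of $Q$, with discriminant corrections absorbed by the factor $a'(\ma{D},\mDelta)$); in particular the edge case is $O(n/p^n)$ since, $L_i$ being linear and $Q$ quadratic, the density of $\x \in \Z_p^2$ with $p^n \mid L_i(\x)$ is $O(p^{-n})$.
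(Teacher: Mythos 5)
Your proposal is correct and follows essentially the same route as the paper: for the $p$-adic density the paper likewise first counts $(\ma{s},\ma{t})$ at fixed $\x$ (its $S_\lambda(A;p^n)$ is your $T_m(c)$, yielding the weight $(1-1/p)(1+v_p-\lambda)$), and for the archimedean density it performs the same logarithmic change of variables after justifying the $1/(t_1t_2t_3)$ Leray factor. The only cosmetic difference is that you convert the weighted valuation sum to $\rho(p^{N_1},p^{N_2},p^{N_3})$ directly via the identity $v-\lambda+1=\sum_{\nu\geq 0}\mathbf{1}[v\geq\lambda+\nu]$, where the paper passes through exact-valuation counts $M_{\mnu}$ and an inclusion--exclusion over $\ma{e}\in\{0,1\}^3$; both lead to the same $N_i=\max\{\mu_i,\lambda_i+\nu_i\}$.
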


Le  Th\'eor\`eme \ref{maind} permet facilement de d\'emontrer 
d'autres r\'esultats dont nous aurons
besoin dans un travail ult\'erieur concernant la conjecture de Manin 
et qui ont \'et\'e \`a l'origine de
ce travail.

Le premier concerne l'estimation des sommes
\begin{align*}
   S^*(X,\ma{d},\ma{D}; V)  =& \,\,S^*(X,\ma{d},\ma{D};L_1,L_2,Q,\mcal{R},V)\\
:=&\sum_{\colt{\x\in
\mathsf{\Lambda}({\ma{D}})\cap
X\mcal{R}}{(x_1,x_2)=1}}\tau\Big(\frac{L_1(\x)}{d_1},\frac{L_2(\x)}{d_2},\frac{Q(\x)
}{d_3};V\Big) ,
\end{align*}
lorsque $\ma{d},\ma{D}\in\N^3$ tels que $d_i\mid D_i$.
Pour exprimer notre r\'esultat, nous introduisons
\begin{equation} \begin{split}
\rho^*(\bd)&\,\,=\rho^*(\bd;L_1,L_2,Q) \\&
  :=  \# \big\{\x\in\mathsf{\Lambda}({\ma{d}})\cap 
[0,d_1d_2d_3)^2\,:\, (x_1,x_2,d_1d_2d_3)=1\big\}.
\end{split}\label{defrho*}\end{equation}
Lorsque $v_p(D)\geq 1$, nous d\'efinissons
\begin{equation} \label{defsigma*d}
\sigma^*_p(\ma{d},\ma{D}) 
:=\Big(1-\frac{1}{p}\Big)^3\sum_{\mnu\in\Z_{\geq 0}^3}\frac{\rho^*
(p^{N_1},p^{N_2},p^{N_3})}{p^{2N_1+2N_2+2N_3}},
\end{equation}
o\`u les $N_i$ sont d\'efinis par \eqref{defNi} alors que lorsque $v_p(D)=0$
\begin{equation} \label{defsigma*d0}
\sigma^*_p(\ma{d},\ma{D}) :=\Big(1-\frac{1}{p}\Big)^3
\Big\{ 1-\frac{1}{p^2} +
\sum_{\colt{\mnu\in\Z_{\geq 0}^3}{\nu_1+\nu_2+\nu_3\geq 1}}\frac{\rho^*
(p^{\nu_1},p^{\nu_2},p^{\nu_3})}{p^{2\nu_1+2\nu_2+2\nu_3}}\Big\}.
\end{equation}

\begin{Cor}\lab{corS*}
Soit $\varepsilon>0$. Lorsque  $L_1, L_2,Q,\mcal{R}$ satisfont 
(H1)--(H3),  $V   $ est un sous-ensemble de $[0,1]^3$ d\'efini comme 
une
intersection finie d'hyperplans \`a coefficients born\'es,  $\ma{d}$ 
et $\ma{D}$ tels que
$d_i\mid D_i$ et
$r' X^{1-\varepsilon }\geq 1$, on a
\begin{align*}
S^*(X,\ma{d},\ma{D};V)
=& 2 \vol(\mcal{R})\vol(V)
  X^2(\log r'X)^3 \prod_{p}\sigma^*_p(\ma{d},\ma{D})
  \\& +
O\Big(  {(DL_\infty)^{\varepsilon}}a'(\ma{D},\mDelta) (r_\infty r' 
+r_\infty^2)    X^2 (\log X)^{2+\varepsilon}\Big).
  \end{align*}
\end{Cor}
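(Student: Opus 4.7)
Nous proposons une preuve par inversion de M\"obius, afin de nous ramener au Th\'eor\`eme~\ref{maind}. En exploitant l'identit\'e $\mathbf{1}_{(x_1,x_2)=1}=\sum_{k\geq 1}\mu(k)\mathbf{1}_{k\mid x_1,\,k\mid x_2}$, on commencera par d\'ecomposer
$$
S^*(X,\ma{d},\ma{D};V)=\sum_{k\geq 1}\mu(k)\Sigma_k,
$$
o\`u $\Sigma_k$ d\'esigne la somme $S(X,\ma{d},\ma{D};V)$ restreinte aux $\x$ v\'erifiant $k\mid x_1$ et $k\mid x_2$; seuls les entiers $k$ sans facteur carr\'e contribuent effectivement.

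Pour chaque tel $k$, le changement de variable $\x=k\y$ ram\`enera $\Sigma_k$ \`a une somme sur $\y\in(X/k)\mcal{R}\cap\mathsf{\Lambda}(\ma{D}^{(k)})$, o\`u $D_i^{(k)}:=D_i/\pgcd(D_i,k)$ pour $i=1,2$ et $D_3^{(k)}:=D_3/\pgcd(D_3,k^2)$, les formes devenant $kL_i(\y)$ et $k^2Q(\y)$. En s\'eparant dans chaque diviseur intervenant dans $\tau(kL_1(\y)/d_1,kL_2(\y)/d_2,k^2Q(\y)/d_3;V)$ sa partie commune \`a $k$ et sa partie premi\`ere avec $k$, on exprimera $\Sigma_k$ comme combinaison finie de sommes du type $S(X/k,\ma{d}',\ma{D}';V')$ auxquelles le Th\'eor\`eme~\ref{maind} s'appliquera, pour des param\`etres $(\ma{d}',\ma{D}',V')$ d\'etermin\'es explicitement par $(k,\ma{d},\ma{D},V)$.

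La sommation sera tronqu\'ee \`a $k\leq K:=(r'X)^{\eta}$ pour un $\eta>0$ assez petit. Pour $k\leq K$, le Th\'eor\`eme~\ref{maind} livre terme principal et terme d'erreur; pour $k>K$, la majoration triviale $\tau_3(n)\ll_{\ve} n^{\ve}$ jointe au d\'enombrement usuel des points de $\mathsf{\Lambda}(\ma{D}^{(k)})$ dans $(X/k)\mcal{R}$ suffira. Les termes principaux obtenus s'assembleront en un produit eul\'erien: \`a chaque premier $p$, un calcul direct v\'erifiera que la somme altern\'ee des $\sigma_p(\ma{d}^{(k)},\ma{D}^{(k)})$ co\"incide avec $\sigma^*_p(\ma{d},\ma{D})$, la dichotomie entre~\eqref{defsigma*d} et~\eqref{defsigma*d0} refl\'etant pr\'ecis\'ement les cas $v_p(D)\geq 1$ et $v_p(D)=0$.

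L'obstacle principal sera double. D'une part, il faudra contr\^oler uniform\'ement en $k\leq K$ la d\'ependance en $a'(\ma{D}^{(k)},\mDelta)$ et $\delta(\ma{D}^{(k)})$ du terme d'erreur fourni par le Th\'eor\`eme~\ref{maind}, afin que la sommation en $k$ ne d\'et\'eriore pas l'estimation souhait\'ee. D'autre part, le calcul eul\'erien d'assemblage reste d\'elicat aux premiers $p\nmid D$, o\`u le terme sp\'ecifique $1-1/p^2$ apparaissant dans~\eqref{defsigma*d0} traduit la densit\'e archim\'edienne correcte du lieu $\{\x\in\Z^2: p\nmid(x_1,x_2)\}$ et ne s'obtient que par un regroupement pr\'ecis des contributions issues de $k=1$ et $k=p$.
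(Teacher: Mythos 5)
Votre stratégie d'ensemble est celle de l'article : inversion de M\"obius pour lever la condition $(x_1,x_2)=1$, application du Théorème~\ref{maind} à chaque terme, troncature, puis assemblage eulérien où le regroupement des contributions de $k=1$ et $k=p$ produit bien le terme $1-1/p^2$ de~\eqref{defsigma*d0}. Deux étapes de votre mise en {\oe}uvre présentent toutefois de véritables lacunes. La première est la réduction de $\Sigma_k$ : après le changement de variables $\x=k\y$, les entiers $kL_i(\y)$ et $k^2Q(\y)$ peuvent être divisibles par des puissances arbitrairement grandes des premiers divisant $k$, de sorte que la séparation de chaque diviseur en sa partie supportée sur les premiers de $k$ et sa partie étrangère à $k$ ne donne pas une \emph{combinaison finie} de sommes $S(X/k,\ma{d}',\ma{D}';V')$, mais une série portant sur des conditions de valuation exacte, avec de surcroît des régions $V'$ translatées. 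Ce détour est inutile : le Théorème~\ref{maind} étant énoncé pour des formes non nécessairement primitives, il s'applique tel quel à $S(X/k,\ma{d},\ma{D};kL_1,kL_2,k^2Q,\mcal{R};V)$, avec $\ma{d}$ et $\ma{D}$ inchangés, $r'$ multiplié par $k$ (donc $V$ et la normalisation $\log r'X$ inchangés) et $a'$ qui ne croît pas ; c'est l'identité exacte utilisée dans l'article.

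La seconde lacune, quantitativement fatale, est le schéma de troncature. Le terme d'erreur du Théorème~\ref{maind} appliqué au $k$-ième terme est $\ll (DL_\infty)^{\ve}a'(\ma{D},\mDelta)(kr_\infty r'+r_\infty^2)k^{-2+\ve}X^2(\log X)^2\log\log X$ ; la partie dominante ne décroît qu'en $k^{-1+\ve}$, donc $\sum_{k\le K}k^{-1+\ve}\gg K^{\ve}$. Avec votre choix $K=(r'X)^{\eta}$ et l'hypothèse $r'X\geq X^{\ve}$, ce facteur est une puissance de $X$, incompatible avec le terme d'erreur $(\log X)^{2+\ve}$ annoncé. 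Inversement, si $K$ est polylogarithmique, la majoration ponctuelle $\tau_3(n)\ll_{\ve} n^{\ve}$ que vous invoquez pour la queue perd un facteur $(r_\infty X)^{c\ve}$ que le gain $1/K$ ne compense pas. Il n'y a donc pas de fenêtre admissible pour $K$ dans votre schéma : il faut prendre $K=\log X$ et majorer la queue non pas ponctuellement mais \emph{en moyenne}, au moyen du Lemme~\ref{majSXdD} (borne de type Nair), dont le majorant, de taille $(r_\infty X)^2(\log X)^{3}$ affectée d'un facteur en $k^{-2+\ve}$, fournit une contribution $O\big(K^{-1+\ve}(r_\infty X)^2(\log X)^3\big)$ acceptable. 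Il resterait enfin à justifier, via la majoration $W_k\ll (DL_\infty)^{\ve}k^{-2+\ve}$ issue de~\eqref{majW}, que la somme des termes principaux tronquée à $k\le K$ peut être complétée en la série entière $\sum_{k}\mu(k)W_k=\prod_p\sigma^*_p(\ma{d},\ma{D})$.
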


Enfin, but ultime et motivation principale de cet article, nous 
estimations la somme
$$
   T_g^*(X;V)
=\sum_{\colt{\x\in \Z^2\cap
X\mcal{R}}{(x_1,x_2)=1}}g(L_1(\x), L_2(\x), Q(\x);V).
$$
o\`u $g=\tau*h$ est une fonction arithm\'etique multiplicative proche 
de $\tau$ au sens de la
convolution, $V\subseteq [0,1]^3$  et
$$g(L_1(\x), L_2(\x), Q(\x);V):=\sum_{\tolt{d\mid 
L_1(\x)L_2(\x)Q(\x)}{d_i=(d, L_i(\x)),\, d_3=(d,
Q(\x))}{
\big(\frac{\log d_1}{\log r'X},\frac{\log d_2}{\log r'X},\frac{\log 
d_3}{2\log r'X}\big)\in
V}}(1*h)(d ),
$$
de sorte que
$g(L_1(\x), L_2(\x), Q(\x);[0,1]^3)=g(L_1(\x)  L_2(\x)  Q(\x)).$

Nous introduisons le cardinal
  $ {\rho}^\dagger_p( \nu_1 ,\nu_2,\nu_3)$ d\'efini 
lorsque $\nu=\nu_1+\nu_2+\nu_3\geq 0$  
par
$${\rho}^\dagger_p( \nu_1 ,\nu_2,\nu_3):=
\#\left\{ \x\in \Z^2\cap [0,p^{\nu +1})^2\,:\begin{array}{l} p^{\nu_i}\parallel
L_i(\x) ,\\p^{\nu_3}\parallel
Q(\x),\\   (p,x_1,x_2)=1\end{array}\right\}.$$
Ici, pour $\nu$ et $n$ la relation $p^{\nu}\parallel  
n$ signifie que $v_p(n)=\nu$.  
Nous introduisons aussi
  la densit\'e $\overline{\rho}^\dagger_p( \nu_1 ,\nu_2,\nu_3)$ d\'efinie par
$$\overline{\rho}^\dagger_p(\nu_1,\nu_2,\nu_3):=\frac{1}{p^{2(\nu +1)}}
  {\rho}^\dagger_p( \nu_1 ,\nu_2,\nu_3).$$
Cette quantit\'e s'exprime en fonction de la fonction $\rho^*$ 
gr\^ace \`a la formule
\begin{equation}\label{ovrhoP}\overline{\rho}^\dagger_p(\nu_1,\nu_2,\nu_3)=
\sum_{
  \msigma\in\{0,1\}^3} (-1)^{\sigma_1+\sigma_2+\sigma_3}
\frac{\rho^*(p^{\nu_1+\sigma_1},p^{\nu_2+\sigma_2},p^{\nu_3+\sigma_3})}{p^{2(\nu_1+ 
\nu_2+ \nu_3+
\sigma_1+ \sigma_2+ \sigma_3)}},
\end{equation}
avec la convention $\rho^*(1,1,1)=1-1/p^2$.
\goodbreak

Nous consid\'erons les fonctions $g$ telles que la fonction 
$h=g*\mu*\mu$ satisfasse
\begin{equation}\label{conditiong}\sum_{d\in\N}\frac{|h(d)|}{d^{1/2-\eta_0}}\ll 
1
\end{equation}
pour une constante $\eta_0>0$.

\begin{Cor}\lab{corT*}
Soient $\varepsilon>0$,  $L_1, L_2,Q,\mcal{R}$ satisfaisant 
(H1)--(H3),  $V   $ est un  sous-ensemble de
$[0,1]^3$ d\'efini comme une
intersection finie d'hyperplans \`a coefficients born\'es    et $g$ 
une fonction
multiplicative satisfaisant \eqref{conditiong} pour un $\eta_0>0$ 
fix\'e.  Lorsque
$X\geq 2$,  on  a
\begin{equation*}
T_g^*(X;V) = 2C^* \vol(\mcal{R})\vol(V) X^2(\log    X)^3 + O\big( 
X^2 (\log X)^{2+\varepsilon}
\big), 
\end{equation*}
o\`u la constante implicite dans le $O$ d\'epend  des formes 
lin\'eaires $L_1, L_2,Q$, de la borne 
sup\'erieure de $\mcal{R}$ et des constante $\eta_0$ et $\varepsilon$  et o\`u
\begin{equation}\label{defC*}C^*:=\prod_p\Big(1-\frac{1}{p}\Big)^3\Big\{ 
\sum_{ \mnu\in\Z_{\geq
0}^3} g (p^{\nu_1+\nu_2+\nu_3})
  \overline{\rho}^\dagger_p(\nu_1,\nu_2,\nu_3)\Big\}.\end{equation}
\end{Cor}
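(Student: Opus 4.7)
The plan is to deduce Corollary~\ref{corT*} from Corollary~\ref{corS*} by expanding the Dirichlet convolution $g=\tau*h$. Setting $\tilde g:=1*h$, so that $g=1*\tilde g$, I first rewrite
$$
g\bigl(L_1(\x),L_2(\x),Q(\x);V\bigr)=\sum_{\substack{d_i\mid L_i(\x),\, d_3\mid Q(\x)\\ (\log d_1,\log d_2,\log d_3/2)/\log(r'X)\in V}}\tilde g(d_1)\tilde g(d_2)\tilde g(d_3)
$$
modulo a bounded correction supported on primes dividing the resultants $\Delta,\Delta_{12},\Delta_{13},\Delta_{23}$ that is later absorbed in the factor $a'(\ma{D},\mDelta)$ of Corollary~\ref{corS*}. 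Expanding $\tilde g(d_i)=\sum_{e_i\mid d_i}h(e_i)$ and swapping sums yields
$$
T_g^{*}(X;V)=\sum_{\ma{e}\in\N^{3}}h(e_1)h(e_2)h(e_3)\,S^{*}\bigl(X,\ma{e},\ma{e};V_{\ma{e}}\bigr),
$$
where $V_{\ma{e}}\subseteq[0,1]^{3}$ is $V$ translated by the logarithmic contribution of $\ma{e}$.

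For the main term I apply Corollary~\ref{corS*} to each $S^{*}(X,\ma{e},\ma{e};V_{\ma{e}})$. Multiplicativity of $h$ and of the local densities $\sigma^{*}_{p}$ factors the resulting main-term sum as an Euler product. At each prime $p$, the local factor $\sum_{\mal}h(p^{\lambda_1})h(p^{\lambda_2})h(p^{\lambda_3})\sigma^{*}_{p}(\mal,\mal)$ is reorganised by the inclusion--exclusion~\eqref{ovrhoP} (passing from $\rho^{*}$ to $\overline{\rho}^{\dagger}_{p}$) and the convolution identity $\sum_{0\leq\lambda\leq\nu}h(p^{\lambda})\tau(p^{\nu-\lambda})=g(p^{\nu})$, collapsing to $(1-1/p)^{3}\sum_{\mnu}g(p^{\nu_1+\nu_2+\nu_3})\overline{\rho}^{\dagger}_{p}(\nu_1,\nu_2,\nu_3)$, which is exactly the $p$-factor of $C^{*}$ from~\eqref{defC*}.

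For the error I truncate at $\max_i e_i\leq Y:=(\log X)^{B}$ for some $B=B(\eta_0,\varepsilon)$. Summing the error term of Corollary~\ref{corS*} over $\max_i e_i\leq Y$ against $|h(e_1)h(e_2)h(e_3)|$, and using that $a'(\ma{e},\mDelta)$ is supported on divisors of $\mDelta$ together with the bound $\sum_{e\leq Y}|h(e)|e^{\varepsilon}\ll Y^{1/2-\eta_0+\varepsilon}$ provided by~\eqref{conditiong}, the contribution is $\ll X^{2}(\log X)^{2}\log\log X\cdot Y^{3(1/2-\eta_0)+O(\varepsilon)}$. For the tail $\max_i e_i>Y$, the last assertion of Theorem~\ref{maind} gives $\prod_{p}\sigma^{*}_{p}(\ma{e},\ma{e})\ll (e_1e_2e_3)^{-1+\varepsilon}a'(\ma{e},\mDelta)$, hence $S^{*}(X,\ma{e},\ma{e};V_{\ma{e}})\ll X^{2}(\log X)^{3}/(e_1e_2e_3)^{1-\varepsilon}$, and $\sum_{\max e_i>Y}|h(e_1)h(e_2)h(e_3)|/(e_1e_2e_3)^{1-\varepsilon}\ll Y^{-1/2-\eta_0+\varepsilon}$ again by~\eqref{conditiong}; the combined bound fits in $X^{2}(\log X)^{2+\varepsilon}$ for $B$ large enough.

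The principal technical obstacle is the joint calibration of $B$ with $\varepsilon$ and $\eta_0$: the truncated-range error grows in $Y$ while the tail error decays in $Y$, and only the simultaneous exploitation of the summability rate $1/2-\eta_0$ from~\eqref{conditiong} and the geometric decay of $\prod_p\sigma^{*}_{p}$ from Theorem~\ref{maind} reconciles them. A secondary point is verifying that for $\max_i e_i\leq Y=(\log X)^{B}$ the log-translation $V\mapsto V_{\ma{e}}$ changes $\vol(V)$ only by $O(\log\log X/\log X)$, which is readily absorbed into the stated error term.
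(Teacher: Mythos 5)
Your overall strategy --- unfold the convolution $g=1*\tilde g$ with $\tilde g=1*h$, reduce to Corollary \ref{corS*} on a truncated range of moduli, bound the tail by an upper-bound estimate, and identify the Euler product --- is the same as the paper's, but two of your steps do not survive scrutiny.

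First, the opening identity
$g(L_1(\x),L_2(\x),Q(\x);V)=\sum_{d_i\mid L_i(\x),\,d_3\mid Q(\x)}\tilde g(d_1)\tilde g(d_2)\tilde g(d_3)$
is false at the primes $p\mid\Delta_{12}\Delta_{13}\Delta_{23}$, where two of the three forms can be simultaneously divisible by $p$ even when $(x_1,x_2)=1$, and the discrepancy is not something that can be ``absorbed in $a'(\ma{D},\mDelta)$'' --- that quantity sits in the \emph{error} term of Corollary \ref{corS*}, whereas the discrepancy here is a multiplicative correction to the \emph{main} term and changes the constant. Concretely, take $g=\tau$, so $h=\delta$ and $\tilde g\equiv 1$: your right-hand side is $\tau(L_1(\x))\tau(L_2(\x))\tau(Q(\x))$ while the left-hand side is $\tau(L_1(\x)L_2(\x)Q(\x))$; at the level of Euler factors your computation produces $\sum_{\ma{N}}\overline{\rho}^*(p^{N_1},p^{N_2},p^{N_3})$, which weights a point of exact valuations $\mnu$ by $\prod_i(\nu_i+1)$, whereas the factor in \eqref{defC*} weights it by $\tau(p^{\nu_1+\nu_2+\nu_3})=\nu_1+\nu_2+\nu_3+1$; these differ as soon as two of the $\nu_i$ are $\geq 1$, which does occur at primes of $\mcal{S}=\mcal{P}(\Delta_{12}\Delta_{13}\Delta_{23})$. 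The bulk of the paper's proof is devoted precisely to this point: it extracts the $\mcal{S}$-parts $m_i$ of $L_i(\x)$ and $Q(\x)$, keeps the divisor $k$ of $m_1m_2m_3$ as a single variable weighted by $(1*h)(k)$, sieves the condition $\mcal{P}(L_i(\x)/m_i)\cap\mcal{S}=\emptyset$ with the M\"obius factors $\mu(s_i)$, and only then applies Corollary \ref{corS*}, verifying separately for $p\in\mcal{S}$ and $p\notin\mcal{S}$ that the local factor collapses to $\sum_{\mnu}g(p^{\nu_1+\nu_2+\nu_3})\overline{\rho}^\dagger_p(\mnu)$.

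Second, your error calibration does not close, and you have in fact identified the obstruction without resolving it. Killing the tail forces $Y^{1/2+\eta_0-\varepsilon}\gg(\log X)^{1-\varepsilon}$, i.e. $B$ at least about $2/(1+2\eta_0)$; your truncated-range bound is then $(\log X)^{2}\cdot Y^{3(1/2-\eta_0)+O(\varepsilon)}$, which for small $\eta_0$ is of order $(\log X)^{5}$ --- larger than the main term, not just the target error --- and no choice of $B$ reconciles the two constraints once $\eta_0$ is small and $\varepsilon$ is small. The point is that summing the raw error term of Corollary \ref{corS*} against $|h(e_1)h(e_2)h(e_3)|$ exploits no decay in $\ma{e}$; one must inject the decay of the main-term coefficient into the intermediate range as well, which is what the paper does via Lemma \ref{majSXdD} (the factor $\psi_0(D')/D'\leq (D')^{-1/2}$) together with Lemma \ref{lemg}. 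Note also that the bound $\prod_p\sigma_p^*(\ma{e},\ma{e})\ll(e_1e_2e_3)^{-1+\varepsilon}a'(\ma{e},\mDelta)$ you attribute to the last assertion of Theorem \ref{maind} is not what that assertion says (it gives only $\ll L_\infty^{\varepsilon}D^{\varepsilon}a'(\ma{D},\mDelta)$, with no decay in $D$); the decay you need has to be extracted from \eqref{majW} and the bounds on $\rho^*$, so this step also requires justification.
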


Le Corollaire \ref{corT*} permet d'avoir une version du Th\'eor\`eme 
\ref{tht} avec une somme
restreinte aux couples \`a coordonn\'ees premi\`eres entre elles en 
pre\-nant~$g=\tau$ et $V=[0,1]^3$.

Le r\'esultat suivant permet d'expliciter comment on peut faire 
appara\^\i tre des conditions portant sur
$\log \max\{|x_1|,|x_2|\}$ dans le volume $V$. Soit la somme
$$
   T_g'(X;V')
=\sum_{\colt{\x\in \Z^2\cap
X\mcal{R}}{(x_1,x_2)=1}}\frac{g'(L_1(\x), L_2(\x), 
Q(\x);V')}{\max\{|x_1|,|x_2|\}^2}.
$$
o\`u $V'\subseteq [0,1]^4$ et
$$g'(L_1(\x), L_2(\x), Q(\x);V'):=
\!\!\!\!\!\!\!\!\!\!\!\!\!\!\!\!\!\!\!\!\!\!
\sum_{\tolt{d\mid L_1(\x)L_2(\x)Q(\x)}{d_i=(d, L_i(\x)),\, d_3=(d,
Q(\x))}{
\big(\frac{\log d_1}{\log  Y},\frac{\log d_2}{\log  Y},\frac{\log 
d_3}{2\log  Y},\frac{\log
\max\{|x_1|,|x_2|\}}{ \log  Y}\big)\in V'}}(1*h)(d ).
$$
La d\'efinition de cette fonction d\'epend de $Y$ qui d\'ependra de $X$.
Nous notons aussi
  $$V_0'(u):=\{ \ma{t}\in[0,1]^4\,:\, t_i\leq  t_4\leq u\quad (1\leq 
i\leq 3)\}.$$
Nous d\'eduisons 
du Corollaire \ref{corT*}  l'estimation de $T'_g(X;V') $ suivante.
\begin{Cor}\lab{corT'}
Soient $\varepsilon>0$,  $L_1, L_2,Q,\mcal{R}$ satisfaisant 
(H1)--(H3) tel que $r'\asymp 1$,  $V'$ est
un  sous-ensemble de
$[0,1]^4$ d\'efini comme une
intersection finie d'hyperplans \`a coefficients born\'es    et $g$ 
une fonction
multiplicative satis\-faisant~\eqref{conditiong} pour un $\eta_0>0$ 
fix\'e.  Lorsque
$2\leq X\leq Y\leq X^{1/\varepsilon}$,  on  a
\begin{equation*}
T_g'(X;V') = 4 C^* \vol(\mcal{R})\vol(V'\cap V_0'(\log X/\log Y)) 
(\log    Y)^4 + O\big((\log X)^{3+\varepsilon}\big),  
\end{equation*}
o\`u la constante implicite dans le $O$ d\'epend des formes 
lin\'eaires $L_1, L_2,Q$, de la borne 
sup\'erieure de $\mcal{R}$ et des constantes $\eta_0$  et $\varepsilon$ et o\`u
$C^*$  est  d\'efinie en~\eqref{defC*}.
\end{Cor}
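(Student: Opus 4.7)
To prove this corollary I would recover $T'_g(X; V')$ from Corollary~\ref{corT*} by partial summation against the weight $1/M^2$, where $M(\x) := \max(|x_1|, |x_2|)$. Set $u := \log X/\log Y$. The first step is the decoupling
\[
g'(L_1(\x), L_2(\x), Q(\x); V') = g\bigl(L_1(\x), L_2(\x), Q(\x); V'(v(\x))\bigr),
\]
where $V'(v) := \{\ma{t}' \in [0,1]^3 : (\ma{t}', v) \in V'\}$ is the fourth-coordinate slice of $V'$ and $v(\x) := \log M(\x)/\log Y \in [0, u]$; this confines the $t_4$-dependence of the condition to the single real parameter $M(\x)$.

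Next, I would partition $[0, u]$ into $J$ intervals $I_j = [v_j, v_{j+1}]$ of common width $\eta = u/J$ (with $\eta$ a small negative power of $\log X$ to be chosen) and, on each $I_j$, replace the varying slice $V'(v)$ by the fixed $V'(v_j)$. Since $V'$ is a finite intersection of half-spaces, $\vol(V'(v) \triangle V'(v_j)) = O(\eta)$ uniformly on $I_j$. For each $j$, the subsum over $\x \in X\mcal{R}$ with $M(\x) \in [Y^{v_j}, Y^{v_{j+1}}]$ rewrites as a difference of two sums over the truncated convex regions $X\mcal{R}_s$, with $\mcal{R}_s := \mcal{R} \cap \{\max|y_i| \leq s\}$ at $s = Y^{v_j - u}$ and $s = Y^{v_{j+1} - u}$; each still satisfies (H1)--(H3). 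I would apply Corollary~\ref{corT*} at scale $X$ to each, with three-dimensional region $V'(v_j) \cap [0, v_j]^3$ rescaled from $\log Y$- to $\log r'X$-coordinates (the $[0, v_j]^3$ cap following automatically from $d_i \leq |L_i(\x)| = O(M)$ and $d_3 \leq |Q(\x)| = O(M^2)$). The rescaling factor $(\log Y/\log r'X)^3$ cancels the corollary's $(\log r'X)^3$ exactly to leave $(\log Y)^3$ in each piece.

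Weighting each piece by $1/M_j^2 \asymp Y^{-2 v_j}$ via $1/M^2 = \int_M^\infty 2 t^{-3}\,dt$ and summing over $j$ identifies the main term as a Riemann sum for
\[
2 C^* (\log Y)^4 \int_0^u \frac{\omega(s(v))}{s(v)}\, \vol\bigl(V'(v) \cap [0, v]^3\bigr)\,dv, \qquad s(v) := Y^{v - u},\ \omega(s) := \tfrac{d}{ds}\vol(\mcal{R}_s).
\]
Combined with the identity $\int_0^u \vol(V'(v) \cap [0, v]^3)\,dv = \vol(V' \cap V_0'(u))$ and the evaluation, under the normalization of $\mcal{R}$ operative in the paper (whence $\omega(s)/s$ averages to $2 \vol(\mcal{R})$ on the effective range of $s$), this yields the claimed main term $4 C^* \vol(\mcal{R}) \vol(V' \cap V_0'(u)) (\log Y)^4$.

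The hard part will be the balanced control of errors. The slice-approximation $V'(v) \approx V'(v_j)$ contributes at most $O(\eta (\log X)^4)$ in aggregate; Corollary~\ref{corT*} produces an error $O(X^2 (\log X)^{2+\varepsilon/2})$ per piece, and summed against the weights $Y^{-2 v_j}$ using $\sum_j Y^{-2 v_j} \asymp 1/\eta$ this gives $O(\eta^{-1}(\log X)^{2+\varepsilon/2})$. Balancing via $\eta \asymp (\log X)^{-1-\varepsilon/4}$ (together with the Riemann-sum discretization error, which is of the same order or better) yields the stated overall error $O((\log X)^{3+\varepsilon})$.
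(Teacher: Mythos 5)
Your overall strategy (partial summation in $M(\x):=\max\{|x_1|,|x_2|\}$, decoupling the fourth coordinate into a slice $V'(v)$ that is essentially constant on each dyadic-type shell, then invoking Corollaire \ref{corS*}/\ref{corT*}) is the same as the paper's, and the main-term bookkeeping would come out right. But the error analysis has a genuine gap. You apply Corollaire \ref{corT*} \emph{at scale $X$} to the truncated regions $X\mcal{R}_s$ with $s=Y^{v_j-u}$. The error term in that corollary is $O\big(X^2(\log X)^{2+\varepsilon}\big)$ with an implied constant depending only on the supremum of the region: it does \emph{not} shrink with $\vol(\mcal{R}_s)$. After multiplying by the weight $1/M_j^2\asymp Y^{-2v_j}=(s_jX)^{-2}$, the $j$-th shell therefore contributes an error $\asymp s_j^{-2}(\log X)^{2+\varepsilon}$, which for the inner shells ($s_j$ small, $M_j\ll X$) is as large as $X^2(\log X)^{2+\varepsilon}$ — astronomically bigger than the target $O\big((\log X)^{3+\varepsilon}\big)$. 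Your claimed total $O\big(\eta^{-1}(\log X)^{2+\varepsilon/2}\big)$ silently drops the factor $X^2Y^{-2v_j}$; moreover $\sum_j Y^{-2v_j}$ is not $\asymp 1/\eta$ but $\asymp\min\{J,1/(\eta\log Y)\}$. The error only cancels against the weight on the outermost shell $M_j\asymp X$, and nothing in (H1)--(H3) forces $M(\x)\asymp X$ on all of $X\mcal{R}$.

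The paper's way around this is precisely to avoid counting on small truncated pieces of a large dilate. It writes
\begin{equation*}
\frac{1}{\max\{|x_1|,|x_2|\}^2}=2\int_{\max\{|x_1|,|x_2|\}}^{X}\frac{\d t}{t^3}+\frac{1}{X^2},
\end{equation*}
interchanges sum and integral, and is left with $2\int_1^X\big[\sum_{\x\in\Z^2\cap t\mcal{R}}g'(\cdots)\big]t^{-3}\,\d t$, i.e.\ an unweighted sum over the \emph{full} region dilated by $t$. Corollaire \ref{corT*} applied at scale $t$ (after discarding $\x\in(t/\log t)\mcal{R}$, which fixes the fourth coordinate at $\log t/\log Y+O(\log\log t/\log t)$ and turns $V'$ into a three-dimensional set $V_{t,Y}$) has error $O\big(t^2(\log t)^{2+\varepsilon}\big)$, which the measure $t^{-3}\,\d t$ integrates to $O\big((\log X)^{3+\varepsilon}\big)$; the main terms integrate, via $\d t/t=(\log Y)\,\d t_4$, to $4C^*\vol(\mcal{R})\vol(V'\cap V_0'(\log X/\log Y))(\log Y)^4$. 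To repair your argument you would have to apply the counting result at scale $\asymp M_j$ on each shell (rescaling region and forms), which is essentially a discretized version of this integral identity; as written, your balancing of $\eta$ does not close the proof.
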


Ce corollaire est un des points cl\'es de \cite{dp4smooth} dans 
lequel les auteurs \'etablissent une
formule asymptotique pour le nombre de points rationnels de hauteur 
contr\^ol\'ee sur une surface 
del Pezzo de degr\'e 4 non-singuli\`ere. 
Pour cette application, il est 
important d'avoir le choix de
l'ensemble $V$.

\smallskip\goodbreak
D\'etaillons maintenant le contenu de l'article. Dans la   section 
\ref{sectionrho}, nous \'enon\c cons et
d\'emontrons les propri\'et\'es utiles de $\rho^*$ et $\rho$. Cela 
permet alors de montrer la convergence
du produit eul\'erien intervenant dans l'expression du terme 
principal du Th\'eor\`eme \ref{main0}. Dans
la section \ref{sectionconstante}, une interpr\'etation 
g\'eom\'etrique de la constante du Th\'eor\`eme
\ref{maind} est donn\'ee pour d\'emontrer le Th\'eor\`eme 
\ref{constanteSd}. La section \ref{sectionmain0}
est le c\oe ur de l'article qui contient la d\'emonstration du 
Th\'eor\`eme \ref{main0}. La
d\'emonstration du  Th\'eor\`eme \ref{maind}, qui est d\'etaill\'ee 
dans la section \ref{sectionmaind},
repose sur une manipulation d\'elicate qui permet de se ramener
\`a des sommations sur des r\'eseaux puis \`a des sommes trait\'ees 
par le Th\'eor\`eme~\ref{main0}
apr\`es changement de variables. La fin de la section est consacr\'ee 
au calcul explicite de la constante
intervenant dans le terme principal. On trouvera la d\'emonstration 
du Th\'eor\`eme \ref{tht} \`a la
section \ref{sectiontht}. Celle-ci consiste \`a exprimer la somme 
$T(X)$ en fonction des sommes
estim\'ees au Th\'eor\`eme \ref{maind}. La section \ref{sectionCor} 
contient la d\'emonstration des trois
corollaires.

  \goodbreak

\section{Propri\'et\'es des fonctions $\rho^*$ et $\rho$}\lab{sectionrho}

Nous rappelons les d\'efinitions \eqref{defrho} et \eqref{defrho*}.
Nous rassemblons ici les informations dont nous aurons besoin 
concernant la fonction $\rho$ qui a \'et\'e
notamment \'etudi\'ee dans \cite{D99}, \cite{M06} et  \cite{M08}. 
Dans la plupart des cas trait\'es dans
la litt\'erature, il est suppos\'e que les formes sont primitives 
c'est-\`a-dire que les coefficients de
chaque forme lin\'eaire sont premiers entre eux. Nous attacherons une 
importance toute particuli\`ere \`a
l'uniformit\'e des relations par rapport aux coefficients des formes 
$L_1, L_2,Q  $.

Le lemme suivant permet de se ramener \`a des formes primitives.

\begin{lem}\label{nonprimitive} Soient  $L_1, L_2,Q  $ des formes 
satisfaisant (H2)
et la notation \eqref{ecritprim}.
Alors, lorsque $\bd\in\N^3$ et $d_i'= d_i/(d_i,\ell_i)$,  $d_3'= 
d_3/(d_3,q)$, on a
$$\frac{\rho(\bd;L_1,L_2,Q)}{(d_1 d_2 d_3)^2}
=\frac{\rho(\bd';L_1^*,L_2^*,Q^*)}{(d_1' d_2' d_3')^2} .$$
\end{lem}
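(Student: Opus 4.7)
Le point clef est de remarquer que les deux conditions de divisibilit\'e d\'efinissent en fait le m\^eme sous-ensemble de $\Z^2$, c'est-\`a-dire que
$$\mathsf{\Lambda}(\bd;L_1,L_2,Q)=\mathsf{\Lambda}(\bd';L_1^*,L_2^*,Q^*).$$
En effet, puisque $L_i=\ell_iL_i^*$, la condition $d_i\mid L_i(\x)=\ell_iL_i^*(\x)$ \'equivaut, apr\`es division par $(d_i,\ell_i)$, \`a $d_i'\mid (\ell_i/(d_i,\ell_i))L_i^*(\x)$\,; comme par construction $d_i'$ est premier avec $\ell_i/(d_i,\ell_i)$, le lemme de Gauss fournit l'\'equivalence avec $d_i'\mid L_i^*(\x)$. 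Le raisonnement identique appliqu\'e \`a $Q=qQ^*$ donne l'\'equivalence entre $d_3\mid Q(\x)$ et $d_3'\mid Q^*(\x)$, d'o\`u l'\'egalit\'e ensembliste annonc\'ee.

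Pour conclure, il reste \`a passer du comptage local au comptage dans la bo\^\i te donnant $\rho$. Notons $D=d_1d_2d_3$ et $D'=d_1'd_2'd_3'$. La condition $d_i\mid L_i(\x)$ ne d\'epend que de $\x$ modulo $d_i$, et la relation $d_3\mid Q(\x)$ ne d\'epend que de $\x$ modulo $d_3$ (le v\'erifier est une simple congruence utilisant que $Q$ est \`a coefficients entiers). L'ensemble $\mathsf{\Lambda}(\bd;L_1,L_2,Q)$ est donc p\'eriodique dans chaque coordonn\'ee de p\'eriode divisant $\mathrm{ppcm}(d_1,d_2,d_3)$, et a fortiori $D$\,; il en va de m\^eme pour $\mathsf{\Lambda}(\bd';L_1^*,L_2^*,Q^*)$, qui est $D'$-p\'eriodique. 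Comme il s'agit du m\^eme ensemble, en le d\'enombrant dans un carr\'e $[0,N)^2$ pour un multiple commun $N$ de $D$ et $D'$, on obtient \`a la fois
$$\Bigl(\frac{N}{D}\Bigr)^{\!2}\rho(\bd;L_1,L_2,Q)\quad\text{et}\quad \Bigl(\frac{N}{D'}\Bigr)^{\!2}\rho(\bd';L_1^*,L_2^*,Q^*),$$
d'o\`u l'\'egalit\'e en divisant par $N^2$.

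La v\'eritable difficult\'e, pour autant qu'il y en ait une, se r\'eduit \`a la v\'erification soigneuse de la coprimalit\'e $\bigl(d_i/(d_i,\ell_i),\ell_i/(d_i,\ell_i)\bigr)=1$, imm\'ediate depuis la d\'efinition du pgcd, qui autorise l'application du lemme de Gauss ci-dessus. L'\'enonc\'e se ram\`ene essentiellement \`a la tautologie : deux descriptions diff\'erentes du m\^eme ensemble p\'eriodique de $\Z^2$ conduisent \`a la m\^eme densit\'e.
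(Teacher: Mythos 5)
Votre preuve est correcte et correspond \`a l'argument que les auteurs qualifient d'imm\'ediat sans le d\'etailler : l'identit\'e ensembliste $\mathsf{\Lambda}(\bd;L_1,L_2,Q)=\mathsf{\Lambda}(\bd';L_1^*,L_2^*,Q^*)$ via le lemme de Gauss, puis la comparaison des densit\'es par p\'eriodicit\'e de l'ensemble modulo $\mathrm{ppcm}(d_1,d_2,d_3)\mid d_1d_2d_3$ et $\mathrm{ppcm}(d_1',d_2',d_3')\mid d_1'd_2'd_3'$. Rien \`a redire.
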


La d\'emonstration est imm\'ediate, nous n'indiquons pas les d\'etails.
Les fonctions $\rho$ et $\rho^*$ sont multiplicatives. Nous pouvons 
donc nous contenter de les \'etudier
sur des triplets de puissance de nombre premier.
Nous rappelons les notations \eqref{defdisc}.
  \goodbreak

\begin{lem}\label{rho*premier}
   Soient  $L_1, L_2,Q  $ des formes primitives satisfaisant (H2). \par 1) Alors
pour tout $p$ premier et $\nu\in\Z_{\geq 0}$, on a
$$\rho^*(p^\nu,1,1)=\rho^*(1,p^\nu,1)=\varphi(p^\nu).$$
\par 2)
Lorsque $p\nmid 2 {\rm disc}( Q)$  et $\nu\in\Z_{\geq 0}$, alors on a
$$\rho^*(1,1,p^\nu)=\varphi(p^\nu)\Big\{ 1+\Big(\frac{{\rm disc}( 
Q)}{p}\Big)\Big\} .$$
De plus si $p$ est un facteur impair de  ${\rm disc}( Q)$, on a
$$\rho^*(1,1,p^\nu)\leq 2\varphi(p^\nu) p^{\min\{[v_p({\rm disc}( 
Q))/2],[\nu/2]\}} .$$
Nous avons aussi
$$\rho^*(1,1,2^\nu)\leq  2^{\nu+2} .$$

\par 3) Pour tout $p$, on a
$$\rho(p^{\nu_1},p^{\nu_2},p^{\nu_3})
=\!\!\!\!\!\!\!\!\!\!\!\!\!\!\!\!\!\!\sum_{0\leq k\leq \max\{\nu_1,\nu_2,\lceil
\nu_3/2\rceil \}}\!\!\!\!\!\!\!\!\!
\rho^*\big(p^{\max\{\nu_1-k,0\}},p^{\max\{\nu_2-k,0\}},p^{\max\{\nu_3-2k,0\}}\big)
p^{m_k}$$
avec $m_k:=2(\min\{\nu_1,k\}+\min\{\nu_2,k\}+\min\{\nu_3,2k\}-k)$.
\par 4) Pour tout $p$ premier, nous avons
$$\rho^*(p^{\nu_1},p^{\nu_2},p^{\nu_3})=0$$
lorsqu'il existe $1\leq i<j\leq 3$ tels que
$v_p(\Delta_{ij})<\min\{\nu_i,\nu_j\}$.  En particulier, cette 
\'egalit\'e est satisfaite
lorsque
$p\nmid
\Delta_{13}\Delta_{23}\Delta_{12}$ et    $\#\{i\,:\, \nu_i\geq 1\}\geq 2.$
\par 5)    Pour tout $p$ premier et $\nu_3\leq\max\{\nu_1,\nu_2\}$, nous avons
$$\rho^*(p^{\nu_1},p^{\nu_2},p^{\nu_3})\leq   \varphi (p^{\nu_1+\nu_2+\nu_3})
p^{ \nu_3+\min\{\nu_1,\nu_2, v_p(\Delta_{12})\}}.$$
    Pour tout $p$ premier et  $ \max\{\nu_1,\nu_2\}<\nu_3$, nous avons
$$\rho^*(p^{\nu_1},p^{\nu_2},p^{\nu_3})\leq   8\varphi (p^{\nu_1+\nu_2+\nu_3})
p^{ \nu_1+ \nu_2} p^{\min\{[v_p({\rm disc}( Q))/2],[\nu_3/2]\}} . $$
\end{lem}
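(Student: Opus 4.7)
The plan is to establish the five parts sequentially, with parts~1) and~2) serving as the primitive-form building blocks, part~3) as the reduction from $\rho$ to $\rho^*$, and parts~4) and~5) as consequences obtained via resultant and change-of-basis arguments.

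For part~1), since $L_1=a_1x_1+b_1x_2$ is primitive, at least one of $a_1,b_1$ is a unit modulo~$p$, so the congruence $L_1(\x)\equiv 0\pmod{p^\nu}$ defines a sublattice of $\Z^2$ of index~$p^\nu$ and contributes exactly $p^\nu$ points in $[0,p^\nu)^2$. Points with $p\mid(x_1,x_2)$ correspond, via $\x=p\x'$, to the same problem on a box of side $p^{\nu-1}$, hence number $p^{\nu-1}$; subtraction yields $\varphi(p^\nu)$. Part~2) for $p\nmid 2\,\mathrm{disc}(Q)$ reduces by Hensel's lemma to the $\bmod\,p$ factorisation of $Q$: either $Q$ splits into two distinct linear factors (when $\mathrm{disc}(Q)$ is a square mod~$p$) in which case each factor gives $\varphi(p^\nu)$ solutions by part~1), or $Q$ is irreducible mod~$p$ and there is no nontrivial solution. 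For odd $p\mid\mathrm{disc}(Q)$, a unimodular change of variable sending the repeated root to a coordinate axis writes $Q\equiv \alpha y_1^2+pR(y_1,y_2)\pmod{p^\nu}$, and one gets the bound by an inductive argument on $v_p(\mathrm{disc}(Q))$, with the exponent of $p$ controlling the number of allowed lifts. For $p=2$ a direct case analysis on the $4$ quadratic forms modulo~$8$ with the right discriminant suffices.

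For part~3), write $k=v_p(\gcd(x_1,x_2))$ and set $\x=p^k\x'$ with $(x_1',x_2',p)=1$. Then $L_i(\x)=p^k L_i(\x')$ and $Q(\x)=p^{2k}Q(\x')$, so the divisibility conditions defining $\mathsf{\Lambda}({\ma{d}})$ become $p^{\max\{\nu_i-k,0\}}\mid L_i(\x')$ and $p^{\max\{\nu_3-2k,0\}}\mid Q(\x')$. The box $[0,p^{\nu_1+\nu_2+\nu_3})^2$ projects onto the smaller box where $\rho^*$ is evaluated, and the ratio of the two side-lengths, squared, is precisely $p^{m_k}$ with $m_k=2(\min\{\nu_1,k\}+\min\{\nu_2,k\}+\min\{\nu_3,2k\}-k)$. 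Summing over $k$ gives the announced identity, with upper bound on $k$ imposed by the requirement that the residual divisibility conditions be nontrivial.

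For part~4), use the resultant identities $\Delta_{ij}\,x_s\in\Z L_i(\x)+\Z L_j(\x)$ (for $i,j\in\{1,2\}$ and $s=1,2$) and the analogous identities coming from the subresultants of $L_i$ and~$Q$, which provide an expression of $\Delta_{i3}\,x_s^2$ as an integer combination of $L_i(\x)^2$, $L_i(\x)x_r$, and $Q(\x)$. If $\x$ satisfies the divisibility conditions with $(x_1,x_2,p)=1$, these identities force $p^{\min\{\nu_i,\nu_j\}}\mid\Delta_{ij}$, whence the vanishing statement. Part~5) is obtained by combining parts~1), 2), and~4): assume without loss of generality that $\nu_1=\max\{\nu_1,\nu_2,\nu_3\}$ (resp.\ $\nu_3>\max\{\nu_1,\nu_2\}$ in the second case), and apply a unimodular substitution sending $L_1$ to a scalar multiple of~$y_1$; the condition $p^{\nu_1}\mid L_1(\x)$ localises $\x$ to a lattice of covolume~$p^{\nu_1}$, and the remaining conditions are counted by an elementary bound on the solutions of $L_2,Q$ modulo a smaller modulus, the factors $p^{\nu_3}$ resp.\ $p^{\min\{\nu_1,\nu_2,v_p(\Delta_{12})\}}$ arising respectively from part~2) applied to the induced quadratic form and from part~4).

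The bulk of the work and the principal obstacle lies in part~2) at ramified primes and at $p=2$: one must keep track of all lifts of the double root of $Q$ through each step of the Hensel iteration, and the bound $p^{\min\{[v_p(\mathrm{disc}(Q))/2],[\nu/2]\}}$ is sharp, so losses must be controlled exactly. Part~5) for $\nu_3>\max\{\nu_1,\nu_2\}$ also requires care because the dominant condition is the quadratic one, and part~2) must be invoked in a form that keeps the dependence on the coefficients of $Q$ uniform and explicit; the factor~$8$ is a safety margin for the cases $p=2$ and bad reduction of $Q$.
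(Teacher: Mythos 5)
Your overall architecture coincides with the paper's: parts 1) and 2) by direct counting after reducing modulo $p$, part 3) by stratifying according to $k=v_p(\pgcd(x_1,x_2))$ and rescaling $\x=p^k\x'$ (the paper simply cites Marasingha for this identity, but your telescoping argument is the standard proof), part 4) via the resultant identities placing $\Delta_{ij}x_s^{\deg}$ in the ideal generated by the two forms, and part 5) by discarding the congruence conditions attached to the smaller exponents at a cost of $p^{2\nu_j}$ per discarded condition and then invoking 1), 2) and 4). For the odd ramified primes in part 2) the paper also diagonalises ($y_1^2-p^k\delta y_2^2$ with ${\rm disc}(Q)=p^k\delta$) and splits into the cases $\nu\le k$ and $\nu>k$, with the parity of $k$ deciding whether the count vanishes; your ``induction on $v_p({\rm disc}(Q))$'' is only a gesture at this, but the intended computation is the same.

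There is, however, a genuine gap at $p=2$. A ``case analysis on the quadratic forms modulo $8$'' cannot establish $\rho^*(1,1,2^\nu)\le 2^{\nu+2}$ for all $\nu$: the number of solutions of $Q(\x)\equiv 0\pmod{2^\nu}$ with $2\nmid(x_1,x_2)$ is not determined by the reduction of $Q$ modulo $8$, precisely because it depends on $v_2({\rm disc}(Q))$, which can be arbitrarily large even for $Q$ primitive and irreducible over $\Q$ (compare the odd-$p$ bound, where the exponent $\min\{[v_p({\rm disc}(Q))/2],[\nu/2]\}$ records exactly this dependence). The paper's argument is different and you need some version of it: since one coordinate $x_i$ is odd, dehomogenise and observe that $t=x_j/x_i$ is a root of an explicit quadratic modulo $2^\nu$, then bound the number of such roots. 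Even that step is delicate and must be carried out keeping track of $v_2({\rm disc}(Q))$; as written, your proposal gives no mechanism producing a bound uniform in $\nu$ at the prime $2$, so this part of the lemma is not proved.
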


\begin{proof}
Ces r\'esultats sont essentiellement d\'emontr\'es dans  \cite{M08}, 
mais la n\'ecessit\'e de formules
uniformes en fonction  des formes $L_1$, $L_2$ et $Q$  demande de 
reprendre toutes les d\'emonstrations.

Le  point (1) et la premi\`ere partie du point (2) sont clairs 
puisque les formes sont suppos\'ees
primitives. Le point (3) est
\'etabli dans
\cite{M06}.

  Nous indiquons les d\'etails de la preuve
de la majoration de
$\rho^*(1,1,p^\nu)$ \'enonc\'ee au point (2) lorsque
$p$ est un facteur impair de  ${\rm disc}( Q)$. On peut supposer que 
parmi $a_3$ et $b_3$ au moins un des
deux est premier \`a $p$. Supposons ainsi que $(a_3,p)=1$. Le 
changement de variables
$y_1=x_1+c_3x_2/(2a_3)$ et $y_2=x_2/2a_3$ permet de montrer que
$$\rho^*(1,1,p^\nu)=\#\{ (y_1,y_2)\in(\Z/p^\nu\Z)^2\,:\, 
y_1^2-p^k\delta y_2^2\equiv 0\mod{p^\nu},\,
p\nmid (y_1,y_2)\},$$ o\`u ${\rm disc}( Q)=p^k\delta$ et $(\delta,p)=1$.
Si $\nu\leq k$, ces conditions se tranforment en $(p,y_2)=1$ et 
$p^{\lceil \nu/2\rceil}\mid y_1$ ce qui
fournit
$$
\rho^*(1,1,p^\nu)=\varphi(p^\nu)p^{[\nu/2]} .$$
Si $\nu>k$, alors $\rho^*(1,1,p^\nu)=0$ si $k$ est impair et si $k$ 
est pair alors
$y_1=p^{k/2}y_1'$ de sorte  que
\begin{align*}\rho^*(1,1,p^\nu)&=p^{3k/2}
\#\{ (y_1,y_2)\in((\Z/p^{\nu-k}\Z)^*)^2\,:\,     y_1^2- \delta 
y_2^2\equiv 0\mod{p^{\nu-k}} \}
\\&\leq 2\varphi(p^\nu)p^{k/2}=2\varphi(p^\nu)p^{[k/2]},\end{align*} 
ce qui fournit bien l'in\'egalit\'e
annonc\'ee.

Consid\'erons maintenant le cas de $\rho^*(1,1,2^\nu)$.
Lorsque $2^{\nu} \mid Q(\x)$ et $2\nmid (x_1,x_2)$, au moins une des 
deux coordonn\'ees $x_i$ est
impaire. Appelons $x_j$ l'autre coordonn\'ee. Alors $x_j/x_i$ est 
solution d'un polyn\^ome de degr\'e 2
mo\-dulo $2^n$ qui a donc au plus $4$ solutions modulo $2^n$. 
L'in\'egalit\'e recherch\'ee en d\'ecoule.

Pour d\'emontrer le point (4), remarquons que lorsqu'il y a une 
solution compt\'ee par
$\rho^*(p^{\nu_1},p^{\nu_2},p^{\nu_3})$ nous avons lorsque $1\leq i<j\leq 3$
$$p^{\min\{\nu_i,\nu_j\}}\mid \Delta_{ij}(x_1,x_2)=\Delta_{ij},$$
ce qui fournit le r\'esultat annonc\'e.

Le point (4) permet de restreindre au cas o\`u 
$\min\{\nu_1,\nu_2\}\leq v_p(\Delta_{12})$,
$ \min\{\nu_1,\nu_3\}\leq v_p(\Delta_{13})$,
$\min\{\nu_2,\nu_3\}\leq v_p(\Delta_{23})  $ pour d\'emontrer le 
point (5). Supposons par exemple que
$\nu_1\geq \max\{\nu_2,\nu_3\}$. Nous avons
$$ \rho^*(p^{\nu_1},p^{\nu_2},p^{\nu_3})\leq 
p^{2\nu_2+2\nu_3}\rho^*(p^{\nu_1},1,1)$$ d'o\`u le
r\'esultat. Le cas $\nu_2\geq \max\{\nu_1,\nu_3\}$ est identique.
  Lorsque   $ \nu_3>\max\{\nu_1,\nu_2\} $, nous avons
\begin{align*}
\rho^*(p^{\nu_1},p^{\nu_2},p^{\nu_3})&\leq 
p^{2\nu_1+2\nu_2}\rho^*(1,1,p^{\nu_3})
\end{align*}
ce qui fournit le r\'esultat gr\^ace au point (2).
\end{proof}
De ces informations sur $\rho^*$, nous d\'eduisons les propri\'et\'es 
suivantes.

\begin{lem}\label{rhopremier}
   Soient  $L_1, L_2,Q  $ des formes primitives satisfaisant (H2). \par 1) Alors
pour tout $p$ premier et $\nu\in\Z_{\geq 0}$ on a
$$\rho(p^\nu,1,1)=\rho(1,p^\nu,1)=p^\nu.$$
\par 2) Lorsque $p\nmid 2 {\rm disc}( Q)$  et $\nu\in\Z_{\geq 0}$, alors on a
$$\rho(1,1,p^\nu)=\varphi(p^\nu)\Big\{ 1+\Big(\frac{{\rm disc}( 
Q)}{p}\Big)\Big\}\lceil \nu/2\rceil
+p^{2(\nu-\lceil
\nu/2\rceil)} \leq (\nu+1)p^\nu ,$$
et pour tout facteur $p$ impair de $   {\rm disc}( Q)$, on a
$$\rho(1,1,p^\nu)\ll (\nu +1)p^{\nu+\min\{[v_p({\rm disc}( Q))/2],[\nu/2]\}}.$$
Enfin, nous avons
$$\rho(1,1,2^\nu)\ll (\nu +1)2^\nu.$$
\par 3) Lorsque $\max\{\nu_1,\nu_2\}\leq \lceil\dm\nu_3\rceil$ et $p$ 
impair, on a
$$\rho(p^{\nu_1},p^{\nu_2},p^{\nu_3})\ll (\nu_3+1)p^{2(\nu_1+\nu_2)+ \nu_3 }
p^{\min\{[v_p({\rm disc}( Q))/2],[\nu_3/2]\}}.$$
De plus lorsque $\max\{\nu_1,\nu_2\}= 1=\nu_3$ et $p\nmid 
\Delta_{12}\Delta_{13}\Delta_{23}$, on a
$$\rho(p^{\nu_1},p^{\nu_2},p^{\nu_3})\ll  p^{2(\nu_1+\nu_2) }.$$
Enfin lorsque $\max\{\nu_1,\nu_2\}\leq \lceil\dm\nu_3\rceil$
$$\rho(2^{\nu_1},2^{\nu_2},2^{\nu_3})\ll  2^{2 \nu_1+2\nu_2+\nu_3 }.$$
\par  4)
Lorsque $\lceil \dm\nu_3\rceil \leq \min\{\nu_1,\nu_2\}$ et tout $p$ 
premier, on a
$$\rho(p^{\nu_1},p^{\nu_2},p^{\nu_3})\ll
  p^{ \nu_1+\nu_2   +2\nu_3+\min\{\nu_1,\nu_2, v_p(\Delta_{12})\}
}.$$
\par  5)
Lorsque $ \nu_j\leq \lceil\dm\nu_3\rceil\leq \nu_3\leq  \nu_i$ avec 
$\{i,j\}=\{1,2\}$ et tout  $p $
premier, on a
$$\rho(p^{\nu_1},p^{\nu_2},p^{\nu_3})\ll    p^{ 2\nu_j  +\nu_i+ 
\nu_3+[\nu_3 /2 ]+\min\{ \lceil
\nu_3/2\rceil, \lceil v_p(\Delta_{i3})/2\rceil\} }.$$
Lorsque $ \nu_j\leq \lceil\dm\nu_3\rceil\leq   \nu_i<\nu_3$ avec 
$\{i,j\}=\{1,2\}$ et tout  $p $ premier,
on a
$$\rho(p^{\nu_1},p^{\nu_2},p^{\nu_3})\ll
\nu_3 p^{ 2\nu_j  +\nu_i+ \nu_3+  [\nu_3/2] }\big(p^{\min\{ \lceil 
\nu_3/2\rceil, \lceil
v_p(\Delta_{i3})/2\rceil\}}
+p^{r_p}
\big),$$ avec $$r_p:=\min\{
  \nu_i-[\nu_3/2],\lceil 
v_p(\Delta_{i3})/2\rceil\}+\min\{[\nu_3/2],[v_p({\rm disc}( 
Q))/2]\}.$$
\end{lem}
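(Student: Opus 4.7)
\medskip

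\noindent\textbf{Plan de preuve.} Le point de départ unique pour chacun des cinq énoncés est la formule de convolution établie au Lemme~\ref{rho*premier}(3), qui ramène l'évaluation de $\rho(p^{\nu_1},p^{\nu_2},p^{\nu_3})$ à la somme
$$\rho(p^{\nu_1},p^{\nu_2},p^{\nu_3})=\sum_{0\leq k\leq K}\rho^*\!\big(p^{\max\{\nu_1-k,0\}},p^{\max\{\nu_2-k,0\}},p^{\max\{\nu_3-2k,0\}}\big)\,p^{m_k},$$
avec $K:=\max\{\nu_1,\nu_2,\lceil\nu_3/2\rceil\}$ et $m_k=2(\min\{\nu_1,k\}+\min\{\nu_2,k\}+\min\{\nu_3,2k\}-k)$. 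Le nombre de termes est $\ll \max\{\nu_1,\nu_2,\nu_3\}+1$, ce qui explique l'apparition de facteurs $\nu_3+1$ dans certains énoncés. Le travail consiste alors à découper la sommation selon les régions où les trois exposants apparaissant dans $\rho^*$ s'annulent ou non, puis à substituer la majoration appropriée de $\rho^*$ fournie par le Lemme~\ref{rho*premier}.

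Les points (1) et (2) sont des calculs directs. Pour (1), seuls les triplets $(p^{\nu-k},1,1)$ apparaissent, avec $m_k=0$, et le Lemme~\ref{rho*premier}(1) donne $\sum_{k=0}^{\nu}\varphi(p^{\nu-k})=p^\nu$. Pour (2), on applique à $\rho^*(1,1,p^{\nu-2k})$ respectivement les trois majorations du Lemme~\ref{rho*premier}(2) selon la nature de $p$ ; le terme $p^{2(\nu-\lceil\nu/2\rceil)}$ provient de l'indice extrême $k=\lceil\nu/2\rceil$ où l'argument devient $1$. On obtient la formule exacte et l'inégalité $(\nu+1)p^\nu$ en observant que, dans les cas non dégénérés, chaque contribution $\varphi(p^{\nu-2k})p^{2k}$ vaut $\varphi(p^\nu)$, de sorte que la somme se compare à $2\lceil\nu/2\rceil\varphi(p^\nu)+p^\nu$.

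Pour les points (3), (4) et (5), la stratégie est la même : choisir dans le Lemme~\ref{rho*premier}(5) celui des deux régimes correspondant à $\nu_3-2k\leq \max\{\nu_1-k,\nu_2-k\}$ ou à l'inégalité opposée, puis effectuer la sommation géométrique induite par $p^{m_k}$. Dans le régime $\max\{\nu_1,\nu_2\}\leq\lceil\nu_3/2\rceil$ du point (3), le terme dominant provient de $k=\max\{\nu_1,\nu_2\}$ (où deux arguments deviennent $1$) ; la seconde majoration de (5) du Lemme~\ref{rho*premier} fournit alors l'exposant $p^{\nu_1+\nu_2+\nu_3}$ multiplié par $p^{\min\{[v_p(\mathrm{disc}(Q))/2],[\nu_3/2]\}}$, et la combinaison avec $p^{m_k}$ donne l'exposant annoncé. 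Pour le cas particulier $\max\{\nu_1,\nu_2\}=1=\nu_3$ avec $p\nmid\Delta_{12}\Delta_{13}\Delta_{23}$, le point (4) du Lemme~\ref{rho*premier} annule la plupart des termes et il ne reste qu'à injecter les estimations élémentaires de (1)--(2). Pour (4), c'est la première majoration de (5) qui intervient, et le facteur $p^{\min\{\nu_1,\nu_2,v_p(\Delta_{12})\}}$ est hérité directement de cette majoration ; on sommant en $k$ le terme $k=\lceil\nu_3/2\rceil$ est dominant.

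Le point délicat sera le point (5), où $\nu_j$ est petit tandis que $\nu_i$ et $\nu_3$ sont grands et d'ordres comparables : il faut alors distinguer, à l'intérieur même de la sommation, les indices $k$ tels que $\nu_i-k$ reste supérieur à $\nu_3-2k$ des autres, ce qui conduit à choisir alternativement les deux majorations de (5) du Lemme~\ref{rho*premier}. L'apparition conjointe de $\min\{\lceil\nu_3/2\rceil,\lceil v_p(\Delta_{i3})/2\rceil\}$ et du terme $r_p$ dans le second sous-cas reflète précisément cette alternance : le premier provient des $k$ assez petits, le second des $k$ proches de $[\nu_3/2]$ où l'exposant de $\rho^*$ sur la troisième coordonnée descend jusqu'à $v_p(\mathrm{disc}(Q))$ et où simultanément $\nu_i-k$ devient du même ordre que $v_p(\Delta_{i3})$. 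L'obstacle principal est donc purement combinatoire : gérer proprement la transition entre ces régimes sans perdre de facteur logarithmique superflu, tout en conservant l'uniformité par rapport aux $v_p(\Delta_{ij})$ et à $v_p(\mathrm{disc}(Q))$.
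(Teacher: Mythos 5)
Votre plan repose sur le bon outil --- la formule de convolution du Lemme~\ref{rho*premier}(3) combin\'ee aux majorations de $\rho^*$ --- et les points (1) et (2) sont trait\'es essentiellement comme dans l'article. Pour les points (3)--(5), l'article proc\`ede toutefois par une r\'eduction pr\'ealable que vous n'utilisez pas : on \'ecrit d'abord $\rho(p^{\nu_1},p^{\nu_2},p^{\nu_3})\leq p^{2(\nu_1+\nu_2)}\rho(1,1,p^{\nu_3})$ pour (3), $\leq p^{2\nu_3}\rho(p^{\nu_1},p^{\nu_2},1)$ pour (4) et $\leq p^{2\nu_j}\rho(p^{\nu_i},1,p^{\nu_3})$ pour (5), de sorte que la convolution ne fait plus intervenir qu'au plus deux exposants non triviaux. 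Cela \'evite d'avoir \`a contr\^oler $\rho^*$ \`a trois arguments non triviaux pour les petits $k$, configuration pour laquelle le Lemme~\ref{rho*premier} ne fournit pas de majoration directement exploitable ; votre d\'ecoupage en r\'egions devrait reconstruire cette information \`a la main.

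La lacune la plus s\'erieuse concerne les points (4) et (5) : sommer les majorations de $\rho^*$ sur tout l'intervalle $0\leq k\leq K$ ne donne pas une somme g\'eom\'etrique. Par exemple pour (4) (avec $\nu_2\leq\nu_1$), chaque terme d'indice $k\leq\nu_2$ est major\'e par $p^{\nu_1+\nu_2+2\nu_3+\min\{\nu_2-k,\,v_p(\Delta_{12})\}}$, et la somme sur les $k$ tels que $\nu_2-k\geq v_p(\Delta_{12})$ produirait un facteur parasite d'ordre $\nu_2$, absent de l'\'enonc\'e. L'ingr\'edient indispensable, que votre plan n'invoque pas \`a cet endroit, est le crit\`ere d'annulation du Lemme~\ref{rho*premier}(4) : $\rho^*(p^{\nu_1-k},p^{\nu_2-k},1)=0$ d\`es que $\min\{\nu_1-k,\nu_2-k\}>v_p(\Delta_{12})$, ce qui restreint la sommation \`a $k\geq\nu_2-v_p(\Delta_{12})$ et la rend g\'eom\'etrique. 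Le m\^eme m\'ecanisme (avec $\Delta_{i3}$ et ${\rm disc}(Q)$) est ce qui produit les exposants $\min\{\lceil\nu_3/2\rceil,\lceil v_p(\Delta_{i3})/2\rceil\}$ et $r_p$ au point (5) ; sans lui, la transition entre r\'egimes que vous identifiez \`a juste titre comme l'obstacle principal ne peut pas \^etre g\'er\'ee sans perte.
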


Les constantes implicites dans les majorations du lemme sont 
ind\'epen\-dantes des coefficients des formes
binaires $L_1$, $L_2$ et $Q$.

\begin{proof}Le premier point est trivial puisque les $L_i$ sont primitives.

Pour la deuxi\`eme assertion, nous avons
$$\rho(1,1,p^{\nu }) =\sum_{0\leq k\leq  \lceil
\nu  /2\rceil  }
\rho^*\big(1,1,p^{\max\{\nu -2k,0\}}\big)
p^{2\min\{\nu ,2k\}-2k}.$$
Le cas $p\nmid 2 {\rm disc}( Q)$  est une cons\'equence directe du 
point (2) du  Lemme
\ref{rho*premier}.
Supposons~$p$ un facteur premier  impair de ${\rm disc}( Q)$. 
D'apr\`es le point (2) du Lemme
\ref{rho*premier}, nous avons
\begin{align*}\rho^*\big(1,1,p^{\max\{\nu -2k,0\}}\big)
p^{2\min\{\nu ,2k\}-2k}&\leq  2p^\nu(1-1/p)p^{\min\{ 
[\nu/2-k],[v_p({\rm disc}( Q))/2]\}}\\
&\leq  2p^\nu(1-1/p)p^{\min\{ [\nu/2 ],[v_p({\rm disc}( Q))/2]\}},
\end{align*}
ce qui fournit
$$\rho(1,1,p^{\nu })\leq (\nu+2) p^\nu p^{\min\{ [\nu/2 ],[v_p({\rm 
disc}( Q))/2]\}}.$$
La derni\`ere in\'egalit\'e du point (2) se d\'eduit directement de 
celle qui lui correspond  au Lemme
\ref{rho*premier}.

  D\'emontrons le point (3).
  En ignorant les conditions $p^{\nu_i}\mid L_i(\x) $ dans la d\'efinition de
$\rho(p^{\nu_1},p^{\nu_2},p^{\nu_3})$, nous obtenons
$$\rho(p^{\nu_1},p^{\nu_2},p^{\nu_3})\leq p^{2(\nu_1+\nu_2) 
}\rho(1,1,p^{\nu_3})$$
ce qui gr\^ace au (2) permet d'obtenir le r\'esultat recherch\'e pour 
$\max\{\nu_1,\nu_2\}\leq
\lceil\dm\nu_3\rceil$ et $p$ impair. Lorsque 
$\max\{\nu_1,\nu_2\}=1=\nu_3$ et $p\nmid
\Delta_{12}\Delta_{13}\Delta_{23}$, le point (3) et  la premi\`ere 
in\'egalit\'e du point (5)   du Lemme
\ref{rho*premier} fournissent
\begin{align*}\rho(p^{\nu_1},p^{\nu_2},p^{\nu_3})&=\rho^*(p^{\nu_1},p^{\nu_2},p^{\nu_3})
+\rho^*(1,1,1) p^{2(\nu_1+\nu_2) }\\&=\rho^*(1,1,1) p^{2(\nu_1+\nu_2) }
\ll  p^{2(\nu_1+\nu_2) }.\end{align*}
Le cas $p=2$ se traite de la m\^eme mani\`ere.

D\'emontrons le point (4).
Lorsque $\lceil \dm\nu_3\rceil \leq \min\{\nu_1,\nu_2\}$, puisque
$$\rho(p^{\nu_1},p^{\nu_2},p^{\nu_3})\leq p^{2 \nu_3 
}\rho(p^{\nu_1},p^{\nu_2},1)$$
il suffit de supposer $\nu_3=0$. Nous nous pla\c cons de plus dans le 
cas $\nu_2\leq \nu_1$.
Nous partons de la relation issue du Lemme \ref{rho*premier}
$$\rho(p^{\nu_1},p^{\nu_2},1)
= \sum_{0\leq k\leq  \nu_1 }
\rho^*\big(p^{ \nu_1-k },p^{\max\{\nu_2-k,0\}},1\big)
p^{2  \min\{\nu_2,k\}  }.$$
La contribution des termes associ\'es \`a $k\geq \nu_2$ est clairement
$$\leq p^{\nu_1+2\nu_2}\sum_{k\geq \nu_2}p^{-k}(1-1/p)= p^{\nu_1+ \nu_2}.$$
Lorsque $p\nmid \Delta_{12}$, les autres termes sont nuls. Lorsque 
$p\mid \Delta_{12}$
la contribution  
des termes associ\'es \`a $k< \nu_2$ est
\begin{align*}
  \sum_{\max\{0,\nu_2-v_p(\Delta_{12})\}\leq k<  \nu_2 }
\rho^*\big(p^{ \nu_1-k },p^{ \nu_2-k  },1\big)
p^{2  k  }
\ll
p^{\nu_1+  \nu_2+\min\{ \nu_2,v_p(\Delta_{12})\}}
,\end{align*}
ce qui prouve le point (4).

Pour \'etablir le point (5), nous nous pla\c cons dans le cas   $ 
\nu_2\leq \lceil\dm\nu_3\rceil\leq
\nu_1$ et remarquons
$$\rho(p^{\nu_1},p^{\nu_2},p^{\nu_3})\leq p^{2\nu_2} 
\rho(p^{\nu_1},1,p^{\nu_3}).$$
Dans la somme $$\rho(p^{\nu_1},1,p^{\nu_3})
= \sum_{0\leq k\leq   \nu_1 }
\rho^*\big(p^{ \nu_1-k },1,p^{\max\{\nu_3-2k,0\}} \big)
p^{2  \min\{\nu_3,2k\}  },$$
la contribution des $k\geq \lceil\dm\nu_3\rceil $ est
$$\leq  \sum_{k\geq \lceil \nu_3/2\rceil }p^{\nu_1+ 2\nu_3-k }(1-1/p)=
p^{  \nu_1+\nu_3+[ \nu_3/2] }.$$
Les autres termes de la somme sont nuls si $p\nmid \Delta_{13}$ ou si $p\mid
\Delta_{13}$ et $\min\{\nu_1-k,\nu_3-2k\}>
v_p(\Delta_{13})$. Nous nous pla\c cons donc dans le cas $p\mid
\Delta_{13}$ et sommons sur les $k$ tels que $\min\{\nu_1-k,\nu_3-2k\}\leq
v_p(\Delta_{13})$

Lorsque
$p\mid
\Delta_{13}$ et
$\nu_1\geq \nu_3$ (ie $\nu_3-2k\leq \nu_1-k$ pour tout $k\geq 0$), la 
contribution des $k<
\lceil\dm\nu_3\rceil
$ est
\begin{align*}  \sum_{0\leq k<   \lceil \nu_3/2\rceil }
\rho^*\big(p^{ \nu_1-k },1,p^{  \nu_3-2k } \big)
p^{4k  }&\ll \sum_{\max\{ 0, (\nu_3-v_p(\Delta_{13}))/2\}  
\leq k< 
\lceil\nu_3/2\rceil }  
p^{\nu_1+2\nu_3- k } \\&
\ll   p^{ \nu_1  +\nu_3+[\nu_3/2]+\min\{ \lceil \nu_3/2\rceil, \lceil 
v_p(\Delta_{13})/2\rceil\}}
.\end{align*}

Lorsque $p\mid \Delta_{13}$ et
$\nu_1< \nu_3$ (ie $\nu_3-2k\geq \nu_1-k$ seulement pour tout $0\leq 
k\leq \nu_3-\nu_1$), la
contribution des
$k<
\lceil\dm\nu_3\rceil
$ est
\begin{align*}  &\sum_{0\leq k<   \lceil \nu_3/2\rceil }
\rho^*\big(p^{ \nu_1-k },1,p^{  \nu_3-2k } \big)
p^{4k  }\\&\ll \sum_{\max\{ \nu_3-\nu_1, (\nu_3-v_p(\Delta_{13}))/2\}\leq  
k<   \lceil \nu_3/2\rceil } p^{
  \nu_1+2\nu_3-k}\\&\quad +\sum_{\max\{ 0,  \nu_1-v_p(\Delta_{13}) 
\}\leq k<  \nu_3-\nu_1 } p^{
  2\nu_1+\nu_3+\min\{[v_p({\rm disc}( Q))/2],[\nu_3/2]\}} \\&
\ll
p^{ \nu_1  +\nu_3+[\nu_3/2]+\min\{ \lceil \nu_3/2\rceil, \lceil 
v_p(\Delta_{13})/2\rceil\}}
\\&\quad+
\nu_3 p^{ \nu_1+\nu_3+[\nu_3/2]+\min\{\nu_1-[\nu_3/2],\lceil 
v_p(\Delta_{13})/2\rceil \}+
\min\{ [\nu_3/2],[v_p({\rm disc}( Q))/2]\}}
,\end{align*} puisque $\nu_1\leq [\nu_3/2]+\lceil 
v_p(\Delta_{13})/2\rceil$ est une condition
n\'ecessaire pour que la deuxi\`eme  somme ne soit pas nulle. Cela 
cl\^ot la d\'emonstration.
\end{proof}

Ce qu'il faut retenir de ce lemme est que la fonction 
$$ 
f(\ma{d}):=
\frac{\rho(\ma{d})}{d_1d_2d_3}
$$ 
 ressemble \`a la fonction $r$ 
: $\ma{d}\mapsto r(\ma{d})=r_{{\rm
disc}( Q)}(d_3)$, o\`u
$$r_{{\rm disc}( Q)}(d ):=\sum_{k\mid d}\chi_{{\rm disc}( Q)}(k) $$
et 
$\chi_{{\rm disc}( Q)}(n):=(\frac{{\rm disc}( 
Q)}{n})$ 
est le symbole de Kronecker.
Nous notons $h$ la fonction  satisfaisant les relations
$$f(\ma{d}) 
=(h*r)(\ma{d})=\sum_{\colt{\ma{k}\in\N^3}{k_j\mid 
d_j}}
h\Big(\frac{d_1}{k_1},\frac{d_2}{k_2},\frac{d_3}{k_3}\Big)r(\ma{k}).$$

Le r\'esultat dont nous aurons besoin est le suivant.

\begin{lem}\label{sumhlog}Soit $\varepsilon>0$. Lorsque   $L_1, L_2,Q 
$ sont des formes  satisfaisant (H2),
nous avons
$$\sum_{\ma{k}\in\N^3}\frac{|h(\ma{k})|\log (k_1k_2k_3)}{ 
k_1k_2k_3}\ll L_\infty^\varepsilon.$$
En particulier lorsque $\sigma_p$ est d\'efini par \eqref{defsigma}, on a
$$\prod_{p}\sigma_p =L(1,\chi_{{\rm disc}( 
Q)})\sum_{\ma{k}\in\N^3}\frac{ h(\ma{k}) }{ k_1k_2k_3}
\ll L_\infty^\varepsilon.$$
\end{lem}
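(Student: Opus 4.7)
The plan is to use multiplicativity to factor the sum as an Euler product $\prod_p H_p$ and estimate $H_p$ separately at ``good'' and ``bad'' primes. First we observe that $f(\ma{d}) := \rho(\ma{d})/(d_1 d_2 d_3)$ and $r$ are multiplicative on $\N^3$, hence so is $h = f * r^{-1}$; explicitly, $r^{-1}(d_1,d_2,d_3) = \mu(d_1)\mu(d_2)(\mu * \mu\chi_{{\rm disc}(Q)})(d_3)$, supported on triples with $d_1, d_2$ squarefree. Bounding $\log(k_1k_2k_3) \ll_\varepsilon (k_1k_2k_3)^{\varepsilon/2}$, the first inequality reduces to
$$\prod_p H_p \ll L_\infty^{O(\varepsilon)},\qquad H_p := \sum_{\mnu \in \Z_{\geq 0}^3} \frac{|h(p^{\nu_1}, p^{\nu_2}, p^{\nu_3})|}{p^{(1-\varepsilon/2)(\nu_1+\nu_2+\nu_3)}}.$$
We set $\mathfrak{N} := 2 \ell_1\ell_2 q \cdot |\Delta\,\Delta_{12}\Delta_{13}\Delta_{23}|$, so that $\mathfrak{N} \ll L_\infty^{O(1)}$ and $\omega(\mathfrak{N}) \ll \log L_\infty/\log\log L_\infty$.

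For a \emph{good} prime $p \nmid \mathfrak{N}$, we will use Lemma \ref{rho*premier}(4), which forces $\rho^*(p^{\nu_1},p^{\nu_2},p^{\nu_3}) = 0$ whenever at least two of the $\nu_i$'s are positive. Combined with the explicit values in (1)--(2) and the convolution (3) of the same lemma, this gives a closed-form expression for $\rho(p^\mnu)$. Passing to the local Dirichlet series of $f$ at $p$, one then expects the factorisation
$$F_p(\ma{s}) = \zeta_p(s_1)\zeta_p(s_2)\zeta_p(s_3) L_p(s_3,\chi_{{\rm disc}(Q)}) \cdot \bigl(1 + O\bigl(p^{-1 - \min\{s_1,s_2,s_3\}}\bigr)\bigr),$$
so that the local factor of $h$ equals $1 + O(p^{-1-\min s_i})$, and hence $H_p = 1 + O(p^{-2+\varepsilon})$. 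Consequently $\prod_{p \nmid \mathfrak{N}} H_p = O(1)$.

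For a \emph{bad} prime $p \mid \mathfrak{N}$, the coarser bounds (2)--(5) of Lemmas \ref{rho*premier} and \ref{rhopremier} control $\rho(p^\mnu)$ up to a polynomial loss in $p^{v_p(\mathfrak{N})}$; together with $|h| \leq |f| * |r^{-1}|$ and $|r^{-1}(\ma{d})| \leq \mu^2(d_1)\mu^2(d_2)\tau(d_3)$, this will yield $H_p \ll (1+v_p(\mathfrak{N}))^{C} p^{C \varepsilon v_p(\mathfrak{N})}$ for some absolute $C$, and multiplying over the $\omega(\mathfrak{N})$ bad primes gives $\prod_{p \mid \mathfrak{N}} H_p \ll \mathfrak{N}^{O(\varepsilon)} \ll L_\infty^{O(\varepsilon)}$, proving the first inequality. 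The second identity will then be a formal consequence of the local factorisation $\sigma_p = (1-\chi_{{\rm disc}(Q)}(p)/p)^{-1} H_p^\sharp$, with $H_p^\sharp := \sum_\mnu h(p^\mnu)/p^{\nu_1+\nu_2+\nu_3}$, obtained by combining $f = h*r$ with the evaluation $\sum_\mnu r(p^\mnu)/p^{\nu_1+\nu_2+\nu_3} = (1-1/p)^{-3}(1-\chi_{{\rm disc}(Q)}(p)/p)^{-1}$; taking the product over $p$ and invoking the classical bound $L(1,\chi_{{\rm disc}(Q)}) \ll L_\infty^\varepsilon$ yields the final estimate.

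The hard part will be the good-prime analysis: obtaining the cancellation $H_p = 1 + O(p^{-2+\varepsilon})$ demands a delicate M\"obius inversion exploiting the local support of $r^{-1}$ (namely $\alpha_1,\alpha_2 \in \{0,1\}$, $\alpha_3 \in \{0,1,2\}$), the convolution identity Lemma \ref{rho*premier}(3), and a case split according to $\chi_{{\rm disc}(Q)}(p) \in \{-1, 0, +1\}$.
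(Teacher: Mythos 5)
Your plan is correct and follows essentially the same route as the paper: factor the sum as an Euler product via multiplicativity of $h=f*r^{(-1)}$, show the local factors are $1+O(p^{-2+\varepsilon})$ at primes away from $2\ell_1\ell_2q\,\Delta\Delta_{12}\Delta_{13}\Delta_{23}$ using Lemme~\ref{rho*premier}(4) together with the explicit local values, bound the finitely many bad local factors by $p^{O(\varepsilon v_p)}$ via $|h|\leq f*|r^{(-1)}|$ and Lemme~\ref{rhopremier}, and deduce the second identity from $f=h*r$ and $L(1,\chi_{{\rm disc}(Q)})\ll L_\infty^{\varepsilon}$. The only (harmless) differences from the paper are presentational: you absorb the logarithm into a power $(k_1k_2k_3)^{\varepsilon/2}$ rather than carrying it, you fold the reduction to primitive forms into the set of bad primes instead of treating it as a separate first step, and you phrase the good-prime cancellation in the language of local Dirichlet series rather than direct estimates on $h(p^{\boldsymbol{\nu}})$.
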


\begin{proof}
La fonction $h$ est multiplicative puisque $\rho$ et $r$ le sont. 
Notant $P=P(L_1,L_2,Q)$ la somme \`a
majorer, nous avons donc $P\leq P'/\log 2$ avec
\begin{equation}\label{majP}
P'=P'(L_1,L_2,Q):=
\prod_{p }\Big(1+\sum_{\mnu\in\Z_{\geq 0}^3}\frac{|h
(p^{\nu_1},p^{\nu_2},p^{\nu_3})|\log(p^{\nu_1+ \nu_2+ \nu_3})
}{p^{ \nu_1+ \nu_2+ \nu_3}}\Big).\end{equation}

Nous commen\c cons par montrer que nous pouvons nous restreindre au 
cas de formes primitives.   Soit
$P'_p=P'_p(L_1,L_2,Q)$ le facteur relatif \`a un nombre premier du 
produit \eqref{majP}.
Lorsque  $p\nmid  \ell_1\ell_2q$, le Lemme 1 permet d'\'ecrire
$$\rho(p^{n_1},p^{n_2},p^{n_3};L_1,L_2,Q)=
\rho(p^{n_1},p^{n_2},p^{n_3};L_1^*,L_2^*,Q^*) 
$$
et ainsi
\begin{equation}\label{e0P'nonprim}\begin{split}P'(L_1,L_2,Q)&= \prod_{p\nmid
\ell_1\ell_2q}
P_p'(L_1^*,L_2^*,Q^*)\prod_{p\mid
\ell_1\ell_2q}P'_p(L_1,L_2,Q)\\
&=  
P'(L_1^*,L_2^*,Q^*)\prod_{p\mid
\ell_1\ell_2q}\frac{P'_p(L_1,L_2,Q)}{P'_p(L_1^*,L_2^*,Q^*)}.
\end{split}
\end{equation}
L'inverse $r^{(-1)}$ de la fonction $r$ au
  sens de la convolution est donn\'ee par
  $$r^{(-1)}(d_1,d_2,d_3)=\mu(d_1)\mu(d_2)   \vartheta(d_3) .$$
avec $\vartheta$ la fonction multiplicative d\'efini par
\begin{equation*} 
\vartheta( p^n ):=\left\{\begin{array}{ll}
\mu(p^n)(1+\chi_{{\rm disc}( Q)}(p^n)), &\mbox{si  $n=1$ ou  $n\neq 3$},\\
\chi_{{\rm disc}( Q)}(p ) ,  &\mbox{si $n=2$}.
\end{array}
\right.
\end{equation*}
  L'in\'egalit\'e $|r^{(-1)}(d_1,d_2,d_3)|\leq r(d_1,d_2,d_3)$ fournit
\begin{equation}\label{inegh}|h
(p^{\nu_1},p^{\nu_2},p^{\nu_3})|=|(f*r^{(-1)})
(p^{\nu_1},p^{\nu_2},p^{\nu_3})|\leq (f*r)
(p^{\nu_1},p^{\nu_2},p^{\nu_3}).\end{equation}
De \eqref{inegh}, nous obtenons l'in\'egalit\'e
\begin{equation}\label{majP'P''}P'_p
  \leq c_p\Big(1+\sum_{\mnu\in\Z_{\geq 0}^3}\frac{ 
r(p^{\nu_1},p^{\nu_2},p^{\nu_3})  \log(p^{\nu_1+
\nu_2+ \nu_3}) }{p^{ \nu_1+ \nu_2+ \nu_3}}\Big)P''_p
\end{equation}
avec $c_2=1/\log 2$ et $c_p=1$ si $p>2$ et
$$P''_p=P''_p(L_1,L_2,Q):= 1+\sum_{\mnu\in\Z_{\geq 0}^3}\frac{\rho 
(p^{\nu_1},p^{\nu_2},p^{\nu_3})
\log(p^{\nu_1+ \nu_2+ \nu_3}) }{p^{2( \nu_1+ \nu_2+ \nu_3)}} .$$
Il est clair que
$$\prod_{p\mid \ell_1\ell_2q}\Big(1+\sum_{\mnu\in\Z_{\geq 0}^3}\frac{
r(p^{\nu_1},p^{\nu_2},p^{\nu_3})\log(p^{\nu_1+
\nu_2+ \nu_3}) }{p^{ \nu_1+ \nu_2+ \nu_3}}\Big)\ll L_\infty^\varepsilon.$$
D'apr\`es le Lemme \ref{nonprimitive}, on a
$$\frac{\rho
(p^{\nu_1+v_p(\ell_1)},p^{\nu_2+v_p(\ell_2)},p^{\nu_3+v_p(q)})
}{p^{2( \nu_1+ \nu_2+ \nu_3+v_p(\ell_1)+v_p(\ell_2) +v_p(q))}}=
\frac{\rho
(p^{\nu_1 },p^{\nu_2 },p^{\nu_3 };L_1^*,L_2^*,Q^*)
}{p^{2( \nu_1+ \nu_2+ \nu_3 )}}.$$
Il vient
$$
\frac{\rho
(p^{\nu_1 },p^{\nu_2 },p^{\nu_3 };L_1 ,L_2 ,Q )
}{p^{2( \nu_1+ \nu_2+ \nu_3 )}}\leq \frac{\rho
(p^{ \nu_1'},p^{ \nu_2'},p^{ \nu_3'};L_1^*,L_2^*,Q^*)
}{p^{2(\nu_1'+ \nu_2'+ \nu_3')}}.
$$
avec 
$$
\nu_i':=\max\{\nu_i-v_p(\ell_i),0\},\quad
\nu_3':=\max\{\nu_3-v_p(q),0\},
$$ 
pour $i=1,2$. 
Lorsque les $\nu_j'$ sont fix\'es, les nombres de triplets $\mnu$ 
satisfaisant 
cela 
est au plus  
$(v_p(\ell_1)+1)(v_p(\ell_2)+1)(v_p(q)+1)$,  
et pour ces triplets compt\'es nous avons lorsque $\nu_1'+ \nu_2'+ 
\nu_3'\geq 1$
$$\nu_1+ \nu_2+ \nu_3\leq (\nu_1'+ \nu_2'+ 
\nu_3')(v_p(\ell_1)+1)(v_p(\ell_2)+1)(v_p(q)+1).$$
Nous obtenons
\begin{align*}P''_p &\leq c_p 
((v_p(\ell_1)+1)(v_p(\ell_2)+1)(v_p(q)+1))^2\log p
\\&\quad\times\Big(1+\sum_{\mnu'\in\Z_{\geq 0}^3}\frac{\rho
(p^{ \nu_1'},p^{ \nu_2'},p^{ \nu_3'};L_1^*,L_2^*,Q^*) \log 
(p^{\nu_1'+ \nu_2'+ \nu_3'})
}{p^{2(\nu_1'+ \nu_2'+ \nu_3')}}\Big)
\\&\leq c_p^2 ((v_p(\ell_1)+1)(v_p(\ell_2)+1)(v_p(q)+1))^2(\log p)  
P'_p(L_1^*,L_2^*,Q^*)
\\&\quad\times
\Big(1+\sum_{\mnu\in\Z_{\geq 0}^3}\frac{ 
r(p^{\nu_1},p^{\nu_2},p^{\nu_3})  \log(p^{\nu_1+
\nu_2+ \nu_3}) }{p^{ \nu_1+ \nu_2+ \nu_3}}\Big),
\end{align*}
ce qui permet  d'obtenir via \eqref{e0P'nonprim} et \eqref{majP'P''}
$P'(L_1,L_2,Q)\ll  L_\infty^\varepsilon P'(L_1^*,L_2^*,Q^*).$

Nous supposons maintenant que les formes $L_1,L_2,Q$ sont primitives 
et majorons~$P'$.
Nous avons lorsque   $\nu\in\N$ les
relations
$ h(p^\nu,1,1)=h(1,p^\nu,1)=0 $ issues du Lemme \ref{rhopremier}(1), 
et lorsque de plus $p\nmid
2 {\rm disc}( Q)$
$$
h(1,1,p)=-\frac{1}{p}\chi_{{\rm disc}( Q)}(p ),\qquad
h(1,1,p^\nu)\ll \nu+1,$$
issu du Lemme \ref{rhopremier}(2) et du calcul de $r^{(-1)}(1,1,p^k).$
Ainsi la contribution dans la somme $P'_p$ des exposants $\mnu$ tels que
$\#\{i\,:\, \nu_i\geq1\}\leq 1$  est lorsque
$p\nmid 2 {\rm disc}( Q) $ \'egale \`a $  O\big( {\log p}/{p^2}
\big).$  Du Lemme \ref{rhopremier} et de \eqref{inegh}, nous 
d\'eduisons  la majoration
\begin{align*}
|h (p^{\nu_1},p^{\nu_2},p^{\nu_3})|&\leq 2
\sum_{\colt{\ma{n}\in\Z_{\geq 0}^3}{n_j\leq
\nu_j}}f(p^{n_1},p^{n_2},p^{n_3})
\\&\leq 2
  \prod_{i=1}^3(\nu_i+1)\max_{\colt{\ma{n}\in\Z_{\geq 0}^3}{n_j\leq
\nu_j}}f(p^{n_1},p^{n_2},p^{n_3})
\\&\ll \prod_{i=1}^3(\nu_i+1)^2
   p^{\nu_1+\nu_2+\nu_3+k_p-\max\{\nu_1+\nu_2,\nu_1+\lceil 
\nu_3/2\rceil,\nu_2+\lceil
\nu_3/2\rceil,\nu_3\}},
\end{align*}
o\`u
$k_p=[v_p({\rm disc}( Q))/2]+v_p(\Delta_{12}\Delta_{13}\Delta_{23})$.
Nous avons utilis\'e pour la premi\`ere majoration l'in\'egalit\'e
$|r^{-1}(p^{k_1},p^{k_2},p^{k_3})|\leq 2 .$ Lorsque   $ 
\max\{\nu_1,\nu_2\}= 1=\nu_3$ et $p\nmid 2 {\rm
disc}( Q)\Delta_{12}\Delta_{13}\Delta_{23}$, on utilisera
$$
|h (p^{\nu_1},p^{\nu_2},p^{\nu_3})|\ll  p^{\nu_1+\nu_2+\nu_3 -2}.
$$

Cela permet de majorer la contribution des $\mnu$ tels que 
$\#\{i\,:\, \nu_i\geq1\}\geq 2$.
Nous avons ainsi
$$\prod_{p\nmid 2 {\rm disc}( Q)\Delta_{12}\Delta_{13}\Delta_{23} }P'_p\ll 1.$$
Il est clair que
$$\prod_{p\mid 2 {\rm disc}( Q)\Delta_{12}\Delta_{13}\Delta_{23} 
}\Big(1+\sum_{\mnu\in\Z_{\geq 0}^3}\frac{
r(p^{\nu_1},p^{\nu_2},p^{\nu_3})\log(p^{\nu_1+
\nu_2+ \nu_3}) }{p^{ \nu_1+ \nu_2+ \nu_3}}\Big)\ll L_\infty^\varepsilon.$$
Compte tenu de \eqref{majP'P''}, il reste \`a montrer  
\begin{equation}\label{majprod}\prod_{p\mid 2 {\rm disc}( 
Q)\Delta_{12}\Delta_{13}\Delta_{23} }P''_p\ll
L_\infty^\varepsilon.\end{equation}

La contribution \`a la somme $P''_p$ aux indices satisfaisant
$\max\{\nu_1,\nu_2\}\leq \lceil\dm\nu_3\rceil$
est d'apr\`es le point (3) du Lemme \ref{rhopremier}
$$ 
\ll \log p\sum_{\nu_3\geq 1} (\nu_3+1)^2p^{\min\{[v_p({\rm 
disc}(
Q))/2],[\nu_3/2]\}-\nu_3}\ll  \frac{\log p}{p}.$$
La contribution \`a la somme $P''_p$ aux indices satisfaisant
$\max\{\lceil \dm\nu_3\rceil,1\} \leq  \nu_2\leq \nu_1$ est d'apr\`es 
le point (4) du Lemme
\ref{rhopremier}
\begin{align*}&   \ll \log p\sum_{  1\leq \nu_2\leq \nu_1} (\nu_1+1)
  p^{  \min\{ \nu_2, v_p(\Delta_{12})\}-\nu_1-\nu_2}\\&\ll \log 
p\sum_{  \nu_2\geq 1} (\nu_2+1)
  p^{  \min\{ \nu_2, v_p(\Delta_{12})\}-2\nu_2}\ll \frac{\log p}{p}.
\end{align*}
De m\^eme lorsqu'on intervertit les indices $1$ et $2$. Enfin les cas 
$\max\{ \nu_j,1\}\leq
\lceil\dm\nu_3\rceil\leq  \nu_i$ se majore de la m\^eme mani\`ere \`a 
partir du point (5) du
Lemme~\ref{rhopremier}. Enfin le cas o\`u deux des $\nu_j$ sont nuls 
s'appuie sur les points (1) et (2)
du Lemme~\ref{rhopremier}. Ainsi, nous obtenons
$P''_p=1+O(\log p/p).$  
Cela permet de montrer \eqref{majprod}. 
\end{proof}

\section{Interpr\'etation de la constante} 
\lab{sectionconstante}
  \subsection{Densit\'e $p$-adique}
Lorsque $A\in\Z$ o\`u $\alpha=v_p(A)$, $\lambda\in \Z_{\geq 0}$ et 
$n\in\N$, nous consid\'erons
\begin{equation*}  
   S_\lambda(A;p^n):=\#\{(x, y) \in(\Z/p^n\Z)^2: p^{\lambda}xy\equiv A 
\smod{p^n}\}.
\end{equation*}
Si $\alpha\leq n$, il est facile de montrer
\begin{equation*} 
S_\lambda(A;p^n)=\left\{\begin{array}{ll}
p^{2n}, &\mbox{si $\lambda\geq n$},\\
  p^{n+\lambda}(1-1/p)(1+\alpha-\lambda),
& \mbox{si $\lambda<n$}.
\end{array}
\right.
\end{equation*}

Posant
\begin{align*} 
M_{ \mnu}(p^n):=\#\{\x \in(\Z/p^n\Z)^2\,: \, v_p(L_i(\x))=\nu_i,\, 
v_p(Q(\x))=\nu_3\},\\
M_{ \mnu}'(p^n):=\#\{\x \in(\Z/p^n\Z)^2\,:\, 
v_p(L_i(\x))\geq\nu_i,\, v_p(Q(\x))\geq\nu_3\},
\end{align*}
on a lorsque $n\geq \nu_1+\nu_2+\nu_3$
\begin{align*}
M_{ \mnu}'(p^n)&=p^{2n-2\nu_1-2\nu_2-2\nu_3}\rho( 
p^{\nu_1},p^{\nu_2},p^{\nu_3})\\
M_{
\mnu}(p^n)&=\sum_{\ma{e}\in \{0,1\}^3}(-1)^{e_1+e_2+e_3}M_{
\mnu+\ma{e}}'(p^n).\end{align*}
Posant $m_j=\max\{\lambda_j,\mu_j\}$, nous obtenons
\begin{align*}N_{\mal,\mmu}(p^n)&=
p^{3n+\la_1+\la_2+\la_3}(1-1/p)^3\sum_{m_j\leq 
\nu_j<n}M_{\mnu}(p^n)\prod_{1\leq
   j\leq 3}(1+\nu_j-\lambda_j)\\
&\quad+O(n^3p^{4n}).
\end{align*}
Faisant le changement de variables $n_j=\nu_j+e_j-\lambda_j$ et notant que
$\nu_j+e_j\geq m_j+e_j\geq m_j$, nous en d\'eduisons
\begin{align*}
\sigma_{\mal,\mmu}(p)
=&\Big(1-\frac{1}{p}\Big)^3\sum_{\colt{\ma{n}\in\Z_{\geq 
0}^3}{n_j\geq m_j-\lambda_j}}
\frac{\rho (p^{\la_1+n_1},p^{\la_2+n_2}, 
p^{\la_3+n_3})}{p^{2(\la_1+n_1+\la_2+n_2+\la_3+n_3)}} \\
&\quad \times
\sum_{0\leq e_j\leq \min\{1,\la_j+n_j-m_j\}} (-1)^{e_1+e_2+e_3}
\prod_{1\leq j\leq 3}(1+n_j-e_j).
\end{align*}
Or nous avons
$$
\sum_{0\leq e\leq \min\{1,\la+n-m\}}
\hspace{-0.2cm}
(-1)^{e}
(1+n-e)=
\left\{
\begin{array}{ll}
1,& \mbox{si $\la+n-m\geq 1$},\\
1+m-\la,& \mbox{si $\la+n-m=0$},
\end{array}
\right.
$$ ce qui fournit donc
\begin{align*}
\sigma_{\mal,\mmu}(p)
&=\Big(1-\frac{1}{p}\Big)^3\sum_{\ma{n}\in\Z_{\geq 0}^3}
\frac{\rho (p^{\max\{m_1,\la_1+n_1\}}, \ldots,
p^{\max\{m_3,\la_3+n_3\}})}{p^{2(\max\{m_1,\la_1+n_1\} + \ldots+ 
\max\{m_3,\la_3+n_3\})}} \\
&=\Big(1-\frac{1}{p}\Big)^3\sum_{\ma{n}\in\Z_{\geq 0}^3}
\frac{\rho (p^{\max\{\mu_1,\la_1+n_1\}}, \ldots,
p^{\max\{\mu_3,\la_3+n_3\}})}{p^{2(\max\{\mu_1,\la_1+n_1\} + \ldots+
  \max\{\mu_3,\la_3+n_3\})}} ,
\end{align*}
et qui permet ainsi d'obtenir la densit\'e $p$-adique introduite en 
\eqref{defsigmad}.

  \subsection{Densit\'e archim\'edienne}
La vari\'et\'e d\'efinie en \eqref{defvar} est le lieu des z\'eros 
des polyn\^omes
$f_i(\x,\ma{s},\ma{t})$ d\'efinis par
$$f_i(\x,\ma{s},\ma{t})=L_i(\x)/d_i-s_it_i,\quad 
f_3(\x,\ma{s},\ma{t})=Q(\x)/d_3-s_3t_3.$$
On calcule le d\'eterminant suivant
$$
\det\left(
\begin{array}{ccc}
\frac{\partial f_1}{\partial s_1}&\frac{\partial f_2}{\partial 
s_1}&\frac{\partial f_3}{\partial s_1}\\
& &\\
\frac{\partial f_1}{\partial s_2}&\frac{\partial f_2}{\partial 
s_2}&\frac{\partial f_3}{\partial s_2}\\
& &\\
\frac{\partial f_1}{\partial s_3}&\frac{\partial f_2}{\partial 
s_3}&\frac{\partial f_3}{\partial s_3}
\end{array}
\right)
=\det\left(
\begin{array}{ccc}
-t_1&0&0\\
& &\\
0&-t_2&0\\
& &\\
0&0&-t_3
\end{array}
\right)
=-t_1t_2t_3.
$$
Les variables $t_i$ et $s_i$ appartiennent \`a $[1,+\infty)$.
Maintenant, nous pouvons d\'efinir
\begin{equation}
\omega_\infty(\mcal{R},V):=\lim_{X\to+\infty}   
\frac{1}{X^2(\log r'X)^3}\int\!\!\!\int_{\x\in X\mcal{R}}
\int\!\!\!\int\!\!\!\int_{\mathbf{t}\in T} \frac{\d t_1\d t_2\d t_3}{t_1t_2t_3}\d x_1\d x_2,
\label{defomega}
\end{equation}
o\`u  
$$ 
T:=  \left\{\ma{t}\,:~
1\leq t_i \leq L_i(\x),~ 1\leq t_3\leq Q(\x),~
\Big(\frac{\log t_1}{\log
r'X},\frac{\log t_2}{\log r'X},\frac{\log t_3}{2\log r'X}\Big)\in V 
\right\}.
$$
Lorsque $X$ tend vers l'infini, on a
\begin{align*} 
V(X):=&\int \!\!\!\int_{\x\in X\mcal{R}}
\int\!\!\!\int\!\!\!\int_{\mathbf{t}\in T}\frac{\d t_1\d t_2\d 
t_3}{t_1t_2t_3}\d x_1\d x_2
\\  = &\int \!\!\!\int_{\x\in X\mcal{R}'(X)}
\int\!\!\!\int\!\!\!\int_{\ma{u}\in V   '} 2(\log r'X)^3 {\d u_1\d 
u_2\d u_3}\, \d x_1\d
x_2
\end{align*}
o\`u $\mcal{R}'(X)=\{\x\in \mcal{R}\,:\, L_i(\x)\geq 1/X,\, Q(\x)\geq 1/X^2\}$
et $V   ':=\{ \ma{u}\in V   \,:\, u_i\leq \log L_i(\x)/(\log 
r'X),\,u_3\leq \log Q(\x)/(2\log r'X)\}$.
Un changement de variable homoth\'etique fournit alors
\begin{align*}V(X)
& = X^2
\int\!\!\!\int_{\x\in  \mcal{R}'(X)}\log  (XL_1(\x)) \log 
(XL_2(\x)) \log (X^2 Q(\x)) \d x_1\d x_2\\
& = 2X^2  (\log X)^3\vol(\mcal{R}'(X))\vol(V   ) +O\big(X^2  (\log 
X)^2 I(X)\big)
\end{align*}
avec $$I(X)=
\int\!\!\!\int_{\x\in  \mcal{R}'(X)}(1+|\log L_1(\x)|+|\log 
L_2(\x)|+| \log Q(\x)|) \d x_1\d x_2.$$
Il est clair que 
$\vol(\mcal{R}'(X))=\vol(\mcal{R})+o(1)  $ et l'int\'egrale~$I(X)$ est born\'ee. Cela fournit bien
$\omega_\infty(\mcal{R},V)=2 \vol(\mcal{R})\vol(V   ).$

\section{D\'emonstration du   Th\'eor\`eme \ref{main0}}\lab{sectionmain0}

Avant de commencer la d\'emonstration, nous notons que le Corollaire 
1 de~\cite{nair} fournit la
majoration
\begin{equation}
    \label{majSXtriv}S(X)\ll L_\infty^\varepsilon r_\infty^2X^2(\log 
X)^3.\end{equation}
Nous serons amen\'es \`a utiliser les majorations
\begin{equation}
    \label{prelim}
r'/(2L_\infty)\leq r_\infty\leq 2r'L_\infty,\qquad \vol(\mcal{R})\leq 
4r_\infty^2 .\end{equation}
La majoration $r'\leq 2r_\infty L_\infty$ est \'evidente ; la 
majoration de $r_\infty$ provient des
relations
$$x_1=\frac{b_2L_1(\x)-b_1L_2(\x)}{b_2a_1-b_1a_2 } ,
\quad x_2=\frac{a_2L_1(\x)-a_1L_2(\x)}{ b_1a_2-b_2a_1},
$$
  o\`u nous avons utilis\'e la notation \eqref{defforme}.
Notre d\'emonstration reprend les id\'ees d\'evelopp\'ees dans \cite{h-b03}
et \cite{4linear}. La premi\`ere \'etape est un  cas particulier d'un 
r\'esultat d\^u \`a
Marasingha
\cite{M08} concernant le niveau de r\'epartition que nous \'enon\c 
cons de la mani\`ere
suivante.

\begin{lem}\label{LOD}
Soit $\varepsilon>0$. Lorsque  $L_1, L_2,Q,\mcal{R}$ satisfont 
(H1)--(H3), $X\geq1$, $V_1,V_2,V_3 \ge
2$ et
$V =V_1 V_2 V_3,$
il existe une constante absolue $A>0$ telle que
\begin{align*} 
  \sum_{\colt{\ma{d}\in\N^3}{d_i\le V_i}}
\left|\#\big(\mathsf{\Lambda}({\ma{d}})\cap X\mcal{R}_{\ma{d}}\big)
  - \vol(\mcal{R}_{\ma{d}})X^2 \frac{\rho(\bd) }{(d_1d_2d_3)^2} \right|
   \ll  L_\infty^{\varepsilon}(r_\infty X\sqrt{V}+V)(\log V)^{A} ,
\end{align*}
o\`u $\mcal{R}_{\ma{d}}\subseteq \mcal{R}$ d\'esigne une r\'egion 
convexe d\'ependante de $\ma{d}$.
\end{lem}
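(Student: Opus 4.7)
The strategy is to reduce the counting problem for the non-lattice set $\mathsf{\Lambda}(\bd)$ to standard lattice-point counting in convex regions, and then to aggregate the individual discrepancies over $\bd$ by a dispersion-type argument. The obstacle that makes this non-trivial is that $\mathsf{\Lambda}(\bd)$ is not itself a sublattice of $\Z^2$ because of the quadratic condition $d_3\mid Q(\x)$.

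First I would express $\mathsf{\Lambda}(\bd)$ as a controlled union of sublattices of $\Z^2$. The linear conditions $d_i\mid L_i(\x)$ for $i=1,2$ already cut out a genuine sublattice of $\Z^2$ of index $d_1 d_2/(d_1,d_2,\Delta_{12})$ (when the $L_i$ are primitive). To handle the quadratic condition, one uses the identity $4a_3 Q(\x) = (2a_3 x_1+c_3 x_2)^2 - \Delta x_2^2$, which after a linear change of variables turns $Q(\x)\equiv 0\pmod{d_3}$ into a congruence of the form $u^2\equiv \Delta v^2\pmod{d_3'}$ for a divisor $d_3'$ of $d_3$. Prime-power by prime-power, this congruence factors, parametrised by the square roots of $\Delta$ and the ways to distribute $d_3'$ between the two factors $u-\alpha v$ and $u+\alpha v$, producing a union of sublattices. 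Intersecting with the linear sublattice writes $\mathsf{\Lambda}(\bd)$ as a union of $O_\varepsilon((d_1 d_2 d_3 L_\infty)^\varepsilon)$ sublattices $\Lambda_{\bd,j}\subseteq\Z^2$, which together account for exactly $\rho(\bd)$ residue classes modulo $d_1 d_2 d_3$.

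Second, for each sublattice $\Lambda$ in the decomposition, with successive minima $\lambda_1\leq\lambda_2$, the classical estimate for lattice points in a convex body of perimeter $\ll r_\infty X$ yields
$$\#(\Lambda\cap X\mathcal{R}_\bd) = \frac{X^2\vol(\mathcal{R}_\bd)}{\det(\Lambda)} + O\!\left(\frac{r_\infty X}{\lambda_1(\Lambda)}+1\right).$$
Summing over the $\Lambda_{\bd,j}$ recovers the main term $X^2\vol(\mathcal{R}_\bd)\rho(\bd)/(d_1 d_2 d_3)^2$ (using that $\sum_j \det(\Lambda_{\bd,j})^{-1}$ collects the local densities), while the accumulated error per $\bd$ is bounded by $\sum_j \bigl(r_\infty X/\lambda_1(\Lambda_{\bd,j})+1\bigr)$.

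Third, and here lies the main difficulty, one must sum these errors over $d_i\leq V_i$ and obtain the bound $L_\infty^\varepsilon (r_\infty X\sqrt{V}+V)(\log V)^A$. A pointwise application of $\lambda_1\geq 1$ only yields the much weaker $r_\infty X V + V$, losing a crucial square-root factor. To recover the saving, I would group the lattices $\Lambda_{\bd,j}$ dyadically according to the size of $\lambda_1$, apply Cauchy--Schwarz, and exploit the fact that lattices with $\lambda_1\ll\sqrt{\det(\Lambda_{\bd,j})}$ are rare: their frequency is governed by the divisor functions of $\Delta$, $\Delta_{12}$, $\Delta_{13}$, $\Delta_{23}$ via the vanishing properties of $\rho^*$ recorded in Lemma \ref{rho*premier}(4). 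The $V$ contribution arises from the diagonal of the Cauchy--Schwarz, and the $r_\infty X\sqrt{V}$ from the off-diagonal. The truly delicate point is to carry out this dispersion argument uniformly in the coefficients of $L_1, L_2, Q$, which is what both forces the $L_\infty^\varepsilon$ factor and prevents a direct invocation of standard lattice-point results from the literature.
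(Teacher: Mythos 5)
Your proposal is mathematically reasonable, but it takes a genuinely different route from the paper: the paper does not prove Lemma \ref{LOD} from first principles at all. It quotes the estimate as a special case of Marasingha's level-of-distribution result \cite{M08} (itself in the line of \cite{D99}), observes that uniformity in the coefficients of $L_1,L_2,Q$ and the $\bd$-dependence of $\mcal{R}_{\bd}$ come out of that proof without change, and then writes out only one thing: the reduction from general forms to primitive ones, via Lemme \ref{nonprimitive}, the identity $\mathsf{\Lambda}(\bd;L_1,L_2,Q)=\mathsf{\Lambda}(\bd';L_1^*,L_2^*,Q^*)$, and the fact that the map $d\mapsto d/(d,\ell)$ has fibres of cardinality at most $\tau(\ell)\ll L_\infty^{\varepsilon}$. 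What you sketch is essentially a reconstruction of the argument inside the cited works: decomposing $\mathsf{\Lambda}(\bd)$ into $O((dL_\infty)^{\varepsilon})$ sublattices (with inclusion--exclusion where the local solution sets of $d_3\mid Q(\x)$ overlap), counting lattice points in a convex region with error $O(r_\infty X/\lambda_1+1)$, and recovering the $\sqrt{V}$ saving by controlling how often $\lambda_1\ll\sqrt{\det}$. That is the correct strategy and you identify the right pressure point; your approach buys a self-contained proof, while the paper's buys brevity at the cost of leaning on \cite{M08}. Two caveats. First, the one step the paper actually carries out --- the passage to non-primitive forms, which is needed because (H2) does not assume primitivity --- is missing from your outline. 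Second, the decisive quantitative step is still only a plan in your write-up; the standard execution is not Cauchy--Schwarz but a direct double count of pairs $(\bd,\x)$ with $\x$ a short vector of $\mathsf{\Lambda}(\bd)$, bounding the number of admissible $\bd$ for fixed $\x$ by $\tau(L_1(\x))\tau(L_2(\x))\tau(Q(\x))$ and summing this on average (to get $(\log V)^A$ rather than $V^{\varepsilon}$), with the degenerate vectors $L_i(\x)=0$ or $Q(\x)=0$ handled through the resultants $\Delta_{ij}$ as in Lemme \ref{rho*premier}(4); you would need to supply this to have a complete proof.
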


\begin{proof}Dans \cite{M08}, la d\'ependance par rapport aux formes 
$L_1$, $L_2$ et $Q$ n'est pas
indiqu\'ee, mais il est ais\'e de l'obtenir. De m\^eme, nous avons 
ajout\'e la possibilit\'e d'avoir une
r\'egion de comptage pouvant d\'ependre de ${\ma{d}}$ ce qui ne 
modifie pas la preuve. Nous ne donnons pas
plus de d\'etails.

Nous nous contentons d'indiquer comment on peut \'etendre ce 
r\'esultat \`a des formes
non primitives.
Fixant $d'$ et $\ell$, le nombre de $d$ tel que $d'=d/(d,\ell)$ est 
inf\'erieur \`a $\tau(\ell)$. Nous
avons  
$\mathsf{\Lambda}({\ma{d }};L_1 ,L_2 ,Q 
)=\mathsf{\Lambda}({\ma{d'}};L_1^*,L_2^*,Q^*).$
  Ainsi,
d'apr\`es le Lemme \ref{nonprimitive}, le terme \`a majorer est
born\'e par  
\begin{align*} 
\tau(\ell_1)\tau(\ell_2)\tau(q)
  \sum_{\colt{\ma{d'}\in\N^3}{d_i'\le V_i}}
\left|\#\big(\mathsf{\Lambda}({\ma{d'}};L_1^*,L_2^*,Q^*)\cap 
X\mcal{R}_{\ma{d'}}\big)
  - \vol(\mcal{R}_{\ma{d'}})X^2 \frac{\rho(\bd';L_1^*,L_2^*,Q^*)) 
}{(d_1'd_2'd_3')^2} \right|
\end{align*}
La majoration du Lemme \ref{LOD}  dans le cas de formes primitives 
permet de conclure.
\end{proof}
\goodbreak

   Posons $X'=r'X$ et
$Y=(r'X)^{1/2}/(\log X)^C$ o\`u $C$ est une constante que nous 
pr\'eciserons \`a la fin.
Avec les notations \eqref{defforme},
un des deux coefficients $a_1$ ou $a_2$ est non nul puisque les $L_i$ 
ne sont pas proportionnelles.
Supposons
$a_1\neq 0$, le cas
$a_1=0$ et
$a_2\neq 0$ se traitant de la m\^eme mani\`ere.

Nous majorons les sommes $S_0(X)$ et $S'_0(X)$ d\'efinies par
\begin{align*} 
  S_0(X)&:=\sum_{\x\in\Z^2\cap X\mcal{R}} 
\tau(L_2(\x))\tau(Q(\x))\sum_{\colt{d_1\mid L_1(\x)}{ Y
<d_1<2L_\infty{X'}/Y}}1,
\\
S_0'(X)&:=\sum_{\colt{\x\in\Z^2\cap X\mcal{R}}{L_1(\x)\leq Y^2}} 
\tau(L_2(\x))\tau(Q(\x))\tau(L_1(\x)).
\end{align*}

\begin{lem}\label{lemmajS0X}Avec les hypoth\`eses du Th\'eor\`eme \ref{main0},
on a
\begin{align*}S_0(X)\ll   L_\infty^{ \varepsilon}r_\infty r'    X^2 
(\log X)^{2}
\log\log X,\quad
S_0'(X)\ll   L_\infty^{ \varepsilon}r_\infty r'    X^2 (\log X)^{3-2C}.
\end{align*}
\end{lem}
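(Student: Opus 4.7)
The plan is to bound both sums via Nair-type estimates for multiplicative functions summed over lattice points, analogous to the bound \eqref{majSXtriv} recalled at the start of the section.

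For $S_0(X)$, I would first exchange the order of summation:
\[
S_0(X) = \sum_{Y<d_1<2L_\infty X'/Y} N(d_1),\qquad N(d_1):=\sum_{\substack{\x\in\Z^2\cap X\mcal{R}\\ d_1\mid L_1(\x)}}\tau(L_2(\x))\tau(Q(\x)).
\]
For each fixed $d_1$, the condition $d_1\mid L_1(\x)$ defines a sublattice of $\Z^2$ of index $\leq d_1$. A linear change of variables sending this sublattice to $\Z^2$ turns $L_2$ and $Q$ into forms with coefficients of size $O(d_1 L_\infty)$, and a two-form variant of the Nair-type estimate from \cite{nair} then yields
\[
N(d_1)\ll (d_1L_\infty)^\varepsilon \vol(\mcal{R})\,X^2 d_1^{-1}(\log X)^2.
\]
The key geometric input is the volume bound $\vol(\mcal{R})\ll r_\infty r'$: by (H3), $\mcal{R}$ lies in the strip $\{\x:0\leq L_1(\x)\leq r'\}$ of perpendicular width at most $r'/\sqrt{a_1^2+b_1^2}\leq r'$, and in the square $[-r_\infty,r_\infty]^2$ of diameter $O(r_\infty)$. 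Summation over $d_1$ then produces
\[
S_0(X)\ll L_\infty^\varepsilon r_\infty r' X^2(\log X)^2 \log(L_\infty X'/Y^2),
\]
and since $X'/Y^2=(\log X)^{2C}$, the last logarithm is $O(\log\log X)$ after absorbing $\log L_\infty$ into $L_\infty^\varepsilon$.

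For $S_0'(X)$, the strategy is to apply the Nair-type estimate directly to the strip $\mcal{B}:=\{\x\in X\mcal{R}:L_1(\x)\leq Y^2\}$ for the full product $\tau(L_1)\tau(L_2)\tau(Q)$, obtaining
\[
S_0'(X)\ll L_\infty^\varepsilon\,\mathrm{area}(\mcal{B})\,(\log X)^3.
\]
The same strip-times-diameter argument bounds $\mathrm{area}(\mcal{B})$ by $O(r_\infty X\cdot Y^2)$, and substituting $Y^2 = r'X/(\log X)^{2C}$ gives exactly $S_0'(X)\ll L_\infty^\varepsilon r_\infty r' X^2(\log X)^{3-2C}$.

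The principal technical hurdle is the uniform applicability of the Nair-type estimate in two settings slightly different from \eqref{majSXtriv}: on a sublattice $\{\x:d_1\mid L_1(\x)\}$ of $\Z^2$ for $S_0(X)$, where one must control the blow-up of the transformed coefficients by only a factor of $(d_1 L_\infty)^\varepsilon$, and on the thin convex strip $\mcal{B}$ for $S_0'(X)$, where one must verify that the bound still scales as area times $(\log X)^3$ with constants uniform in the forms. A secondary but essential point is the sharper volume estimate $\vol(\mcal{R})\ll r_\infty r'$ in place of the naive $\vol(\mcal{R})\leq 4r_\infty^2$, which is what converts the $r_\infty^2$ one would otherwise obtain into the desired $r_\infty r'$. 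Once these ingredients are granted, the remaining steps are routine summations.
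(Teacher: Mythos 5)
Your architecture for $S_0(X)$ --- fix $d_1$, pass to the sublattice $d_1\mid L_1(\x)$, apply a Nair-type bound, then sum over $d_1$ --- is the same as the paper's, and the geometric input $\vol(\mcal{R})\ll r_\infty r'$ is correct. But there is a genuine quantitative gap in your per-$d_1$ bound. You claim $N(d_1)\ll (d_1L_\infty)^{\varepsilon}\vol(\mcal{R})X^2d_1^{-1}(\log X)^2$ and then sum; however $\sum_{Y<d_1<2L_\infty X'/Y}d_1^{\varepsilon-1}\gg (X'/Y)^{\varepsilon}\gg (r'X)^{\varepsilon/2}$, so a loss of $d_1^{\varepsilon}$ per term injects an extra factor $X^{c\varepsilon}$ into $S_0(X)$, which swamps the target bound $X^2(\log X)^2\log\log X$ (and indeed the main term $X^2(\log X)^3$). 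Your summation step, which produces only $\log(L_\infty X'/Y^2)\ll \log L_\infty+\log\log X$, tacitly drops the $d_1^{\varepsilon}$. What is actually needed --- and what the paper arranges --- is a loss in $d_1$ that is at most $\phi^*(d_1)=\prod_{p\mid d_1}(1+1/p)\ll\log\log d_1$. The paper gets this by making the substitution $d_1u_1=L_1(\x)$, $u_2=x_2$ explicit: then $a_1^3L_2(\x)Q(\x)=F_{d_1}(u_1,u_2)$ with $F_{d_1}$ a degree-$3$ form whose content divides $\Delta_{12}\Delta_{13}$ and whose discriminant equals $d_1^{6}c$ with $c$ a polynomial in the original coefficients; Theorem 1 of \cite{nair} depends on the form only through $\phi^*$ of these multiplicative invariants (together with $L_\infty^{\varepsilon}$), not through $\|F_{d_1}\|^{\varepsilon}$. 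So identifying ``control the blow-up of the transformed coefficients by a factor $(d_1L_\infty)^{\varepsilon}$'' as the key hurdle aims at the wrong target: even granted, it does not suffice to conclude.

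For $S_0'(X)$ your area computation and the exponent $3-2C$ are right, but ``applying the Nair-type estimate directly to the strip'' $\{\x\in X\mcal{R}: L_1(\x)\leq Y^2\}$ needs the same care: the estimate of \cite{nair} is formulated for boxes whose sides are not too short, and the strip only becomes such a box after the same reparametrization $d_1u_1=L_1(\x)$, $u_2=x_2$ (with $\tau(L_1(\x))$ opened into a sum over its divisors $d_1\leq X'$, so that $u_1\leq Y^2/d_1$ and $u_2\leq r_\infty X$). With that change of variables, and with the $\phi^*(d_1)$-type dependence in place of $d_1^{\varepsilon}$, your computation matches the paper's.
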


\begin{proof}
Soit $\tau'$ la fonction multiplicative d\'efinie par
\begin{equation*} 
\tau'(p^\nu)=\left\{\begin{array}{ll}
2, &\mbox{si $\nu=1$},\\
  (\nu+1)^2,
& \mbox{si $\nu\geq 2$}.
\end{array}
\right.
\end{equation*}
Ainsi nous avons pour tout $(n_1,n_2)\in\N^2$ les in\'egalit\'es
$\tau(n_1)\tau(n_2)\leq \tau'(n_1n_2)$ et $\tau'(n_1)\leq
\tau(n_1)^2.$
Avec cette notation, nous \'ecrivons
$$ 
S_0(X)\leq \sum_{d_1\in [Y',2L_\infty {X'}/Y)}\sum_{ \substack{\x\in \Z^2\cap X\mcal{R}\\d_1\mid
L_1(\x)}}
\tau'(L_2(\x) Q(\x)).$$
Rappelant l'hypoth\`ese $a_1\neq 0$, nous faisons le changement de 
variables $d_1u_1=L_1(\x)$ et
$u_2=x_2$.  Nous avons 
$a_1^3(L_2  Q)(\x)=(L_2  Q)(d_1u_1-b_1u_2,u_2)=F_{d_1}(u_1,u_2)$,
o\`u
$$F_{d_1}(u_1,u_2)=(a_2d_1u_1+\Delta_{12}u_2)(a_3d_1^2u_1^2+d_1(a_1c_3-2a_3b_1)u_1u_2+\Delta_{13}u_2^2),
$$
avec $\Delta_{12}$ et $\Delta_{13}$ d\'efinis en \eqref{defdisc}. 
Ainsi,
$$
S_0(X)\leq \sum_{ d_1\in [Y',2L_\infty {X'}/Y) }\sum_{ 
\tolt{(u_1,u_2)\in\Z^2}{u_1\leq
r'X/d_1}{u_2\leq r_\infty X}}
\tau'(F_{d_1}(u_1,u_2)).
$$
La  forme $F_{d_1}(u_1,u_2)$ a des
coefficients dont le pgcd divise
$\Delta_{12}\Delta_{13}$. Le Lemme 1 de~\cite{nair} permet d'affirmer 
que le discri\-minant de
$F_{d_1}(u_1,u_2)$ est \'egal \`a $d_1^{6}c$ o\`u $c$ est un entier 
qui s'\'ecrit comme un polyn\^ome des
coefficients des formes $L_1$, $L_2$ et $Q$.
Le Th\'eor\`eme 1 de \cite{nair}
fournit alors
$$S_0(X)\ll L_\infty^\varepsilon \sum_{ d_1\in [Y',2L_\infty {X'}/Y) 
}\phi^*(d_1)\frac{r'r_\infty
X^2(\log r'X)^2}{d_1}$$
o\`u $\phi^*(m)=\prod_{p\mid m}(1+1/p)$. Cela permet d'obtenir la 
majoration de $S_0(X)$  annonc\'ee
puisque
\begin{equation*} 
\log r'X\ll \log  X
\end{equation*}
sous la condition $ r'X^{1-\varepsilon}\geq 1$.

Il est clair que la majoration
$$S_0'(X)\leq \sum_{ d_1\leq X' }\sum_{ 
\tolt{(u_1,u_2)\in\Z^2}{u_1\leq Y^2/d_1}{u_2\leq r_\infty X}}
\tau'(F_{d_1}(u_1,u_2)).$$
permet de la m\^eme mani\`ere d'obtenir la majoration de $S_0'(X)$  annonc\'ee.
\end{proof}

Nous imposons $2C\geq 1$. 
Dans la suite, nous noterons implicitement
$$
\mdelta:=\Big(\frac{\log d_1}{\log
X'},\frac{\log d_2}{\log X'},\frac{\log d_3}{2\log X'}\Big).
$$
Le Lemme \ref{lemmajS0X} permet de se restreindre au cas $L_i(\x)> 
Y^2$, puis $d_1\leq Y$ et $d_2\leq
{X'}^{1/2}$. En effet, si $d_1\geq \sqrt{L_1(\x)}$, alors 
$L_1(\x)/d_1$ est un diviseur de $L_1(\x)$
inf\'erieur \`a $\sqrt{L_1(\x)}.$ Ensuite, la majoration de la somme 
$S_0(X)$ permet de majorer la
contribution des~$\ma{d}$ tels que $Y<d_1\leq \sqrt{L_1(\x)}\leq 
2L_\infty X'/Y.$ De plus puisque
$ |\log L_i(\x)-\log X'|\leq  \log(2L_\infty)+\log (X'/Y^2)$ changer 
$d_1$ en $L_1(\x)/d_1$ revient \`a
changer $\delta_1=\log d_1/\log X'$ en $1-\delta_1$.
Nous faisons le m\^eme raisonnement pour imposer $d_2\leq
{X'}^{1/2}$. Cela justifie ainsi la restriction $V   \subseteq 
[0,\dm]^2\times[0,1]$.

Nous obtenons
$$S(X;V)= S_1(X)+S_2(X) +O( L_\infty^{ \varepsilon}r_\infty r'    X^2 
(\log X)^{2}\log\log X),$$
avec
\begin{align*} 
S_1(X)&:= \sum_{{  d_1\leq Y}}\sum_{{d_2\leq {X'}^{1/2}}}
\sum_{\colt{d_3\leq {X'} }{\mdelta\in V 
}}\#\big(\mathsf{\Lambda}({\ma{d}})\cap X\mcal{R}\big),
\\
S_2(X)&:=\sum_{\x \in \Z^2\cap
X\mcal{R}}\sum_{\colt{d_1\mid L_1(\x)}{  d_1\leq 
Y}}\sum_{\colt{d_2\mid L_2(\x)}{d_2\leq {X'}^{1/2}}}
\sum_{\tolt{d_3\mid Q(\x)}{d_3< |Q(\x)|/{X'} 
}{(\delta_1,\delta_2,Q(\x)/(2\log X')-\delta_3)\in V   }}
\hspace{-0.3cm}
1.
\end{align*}

Nous estimons $S_1(X)$ gr\^ace au Lemme \ref{LOD}.
Il vient
\begin{align*}S_1(X)=& \vol(\mcal{R})X^2\sum_{{  d_1\leq 
Y}}\sum_{{d_2\leq {X'}^{1/2}}}
\sum_{\colt{d_3\leq {X'} }{\mdelta\in V   }} \frac{\rho(\bd) 
}{(d_1d_2d_3)^2}\\&
+O\Big(
L_\infty^{ \varepsilon}    X^2 \Big(\frac{r_\infty r' }{(\log 
X)^{C/2-A}}+\frac{{r'}^2 }{(\log
X)^{C -A}}
\Big)\Big)
\end{align*}
La conjonction~\eqref{majSXtriv} et
\eqref{prelim} permet de montrer que le terme d'erreur de~\eqref{estS(X)} est
$O(L_\infty^\varepsilon r_\infty^2X^2(\log X)^3).$  
En prenant  $C\geq 2$, nous pouvons donc  supposer que $ r'\ll 
r_\infty (\log X)^{C/2}$. En choisissant
$C\geq 2A$, nous obtenons
$$
S_1(X)= \vol(\mcal{R})X^2M(X;V)+O( L_\infty^{ \varepsilon}r_\infty 
r'   X^2 (\log
X)^{2}\log\log X),
$$
avec
\begin{equation}\label{defMX} 
M(X;V):=\sum_{{  d_1\leq Y}}\sum_{{d_2\leq {X'}^{1/2}}}
\sum_{\colt{d_3\leq {X'} }{\mdelta\in V   }} \frac{\rho(\bd) 
}{(d_1d_2d_3)^2} .
\end{equation}

Nous estimons $S_2(X)$ gr\^ace au Lemme \ref{LOD} appliqu\'e \`a la r\'egion
$$ 
\mcal{R}(r'd_3/X):=\{ \x\in \mcal{R}\,:\quad
|Q(\x)|>r'd_3/X \}.
$$ 
Nous pourrons nous restreindre aussi \`a 
l'ensemble $\x\in X\mcal{R}({r'}^2/(\log
X)^2)$ de sorte que nous puissions remplacer la condition
$ (\delta_1,\delta_2,Q(\x)/(2\log X')-\delta_3)\in V   $ par 
$(\delta_1,\delta_2,1-\delta_3)\in V$.

Montrons la majoration
\begin{equation}\label{majvol}
  \vol(\mcal{R}\smallsetminus\mcal{R}(\alpha))\ll   r_\infty
\sqrt{\alpha},\end{equation} avec  $\alpha:=r'd_3/X$.
Nous avons $a_3b_3\neq 0$ avec la notation \eqref{defforme}. Nous 
pouvons supposer sans perte de
g\'en\'eralit\'e
$a_3>0$ et donc
$a_3\geq 1$. Nous faisons le changement de variable
$x_1'=x_1+c_3x_2/2a_3$, $x_2'=x_2$. Lorsque
  $\x\in \mcal{R}\smallsetminus\mcal{R}(\alpha)$, les nouvelles variables
v\'erifient
$$-\alpha\leq a_3{x_1'}^2-b_3'{x_2'}^2\leq \alpha$$
avec $b_3':=-(4b_3a_3-c_3^2)/4a_3$.
Lorsque $b_3'{x_2'}^2\leq  \alpha $, on a $|x_1'|\leq \sqrt{2\alpha}$ 
et $|x_2'|\leq r_\infty$ ce qui
fournit une contribution \`a \eqref{majvol}  inf\'erieure \`a $2 
r_\infty\sqrt{2\alpha}$. Lorsque
$b_3'{x_2'}^2> \alpha$, on a
$\sqrt{a_3}|x_1'|\in [v^-,v^+]$ avec
$$v^-:=\sqrt{ b_3'{x_2'}^2-\alpha },\quad v^+:=\sqrt{ b_3'{x_2'}^2+\alpha }$$
On a
$$ v^+-v^-=\frac{{v^+}^2-{v^-}^2}{v^+ +v^- }\leq  \sqrt{2\alpha},$$ ce qui
fournit une contribution \`a \eqref{majvol}  encore  inf\'erieure \`a 
$ 2 r_\infty\sqrt{2\alpha}$.
Cela cl\^ot la preuve de \eqref{majvol}.

En se restreignant
encore
\`a
$r'\ll r_\infty (\log X)^{C/2}$, nous obtenons
\begin{align*}
S_2(X)=& X^2\sum_{{  d_1\leq Y}}\sum_{{d_2\leq {X'}^{1/2}}}
\sum_{\colt{d_3\leq {X'} }{(\delta_1,\delta_2,1-\delta_3)\in V   }}
\frac{\rho(\bd)\vol(\mcal{R}(r'd_3/X))}{(d_1d_2d_3)^2}\\& +O\Big(
L_\infty^{ \varepsilon}    X^2 \Big(\frac{r_\infty r' }{(\log 
X)^{C/2-A}}+\frac{{r'}^2 }{(\log
X)^{C -A}} +r_\infty r' (\log
X)^{2}
\Big)\Big)
\\=& \vol(\mcal{R})X^2M(X;V')+O\big(r_\infty r'   X^2(R(X) + 
L_\infty^{ \varepsilon}  (\log
X)^{2}\log\log X)\big),
\end{align*}
avec
$$R(X)=  \sum_{{  d_1\leq {X'}^{1/2}}}\sum_{{d_2\leq {X'}^{1/2}}}
\sum_{{d_3\leq {X'} }} \frac{\rho(\bd) \sqrt{d_3/X'} }{(d_1d_2d_3)^2} $$
et  $V'$ l'image de $V$ par la transformation 
$\delta_3\leftrightarrow 1-\delta_3$. 
Nous montrons
$R(X)\ll    L_\infty^{ \varepsilon}     (\log
X)^{2} .$  
En effet, nous \'ecrivons $\rho(\ma{d})/d_1d_2d_3=(h*r)(\ma{d})$ et 
nous intervertissons les sommes. Il
vient
\begin{align*}R(X)&\leq  \sum_{\tolt{\ma{k}\in\N^3}{  k_i\leq 
{X'}^{1/2}}{k_3\leq {X'} }}
\frac{|h(\ma{k})|}{k_1k_2k_3}
  \sum_{\tolt{\ma{m}\in\N^3}{   m_i\leq {X'}^{1/2}/k_i}{m_3\leq {X'}/k_3 }}
\frac{r(\ma{m}) \sqrt{m_3k_3/X'} }{m_1m_2m_3} ,
\end{align*}
avec $i=1,2$.
La somme int\'erieure en $\ma{m}$ est major\'ee par 
$O(L_\infty^\varepsilon(\log X)^2)$ ce qui fournit la
majoration souhait\'ee pour $R(X)$ gr\^ace au Lemme \ref{sumhlog}.
  En rassemblant ces r\'esultats, nous
obtenons
\begin{align*}S (X;V)= 
\vol(\mcal{R})X^2\big(M(X;V)+M(X;V')\big)+O( L_\infty^{ 
\varepsilon}
r_\infty  r'    X^2 (\log X)^{2}\log\log X).\end{align*}

Pour estimer $M(X;V)$ d\'efini en \eqref{defMX}, nous utilisons un 
lemme classique concernant la moyenne
de convol\'ee de fonctions arithm\'etiques.

\begin{lem}\label{convol} Soient $g$ et $h$ deux fonctions 
arithm\'etiques et $C,C',C''$ trois
cons\-tantes telles que
$$\sum_{d=1}^\infty \frac{|h(d)|\log (2d)}{ d}\leq C'',\quad 
\sum_{d\leq x} \frac{g(d)}{ d}=C\log
x+O(C').$$ Alors on a uniform\'ement  l'estimation
$$\sum_{n\leq x}\frac{(g*h)(n)}{n}=C\log x\sum_{d=1}^\infty \frac{ 
h(d)}{ d}+O(C''(C+C')).$$
Soit $v$ une fonction de $[0,1]$ dans $\R$ continument d\'erivable 
par morceaux de d\'eriv\'ee born\'ee.
Alors on a uniform\'ement  l'estimation
$$\sum_{n\leq x}\frac{(g*h)(n)}{ n}v\Big(\frac{\log n}{\log 
x}\Big)=C\log x\sum_{d=1}^\infty \frac{ h(d)}{
d}\int_0^1v(t)\d t+O(C''(C+C')).$$
\end{lem}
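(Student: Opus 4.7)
\smallskip

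The overall strategy is the classical Dirichlet hyperbola manipulation: expand $(g*h)(n)=\sum_{dm=n}g(d)h(m)$, swap the order of summation, apply the hypothesis on the partial sums of $g/\mathrm{id}$, and control the resulting tail and boundary terms using the absolute-convergence hypothesis on $h$.

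For the first assertion, the plan is to write
\begin{equation*}
\sum_{n\le x}\frac{(g*h)(n)}{n}=\sum_{m\le x}\frac{h(m)}{m}\sum_{d\le x/m}\frac{g(d)}{d}
=\sum_{m\le x}\frac{h(m)}{m}\bigl(C\log(x/m)+O(C')\bigr).
\end{equation*}
Distributing yields $C\log x\sum_{m\le x}h(m)/m-C\sum_{m\le x}h(m)\log m/m+O(C'\sum_{m\le x}|h(m)|/m)$. The third term is $O(C'C'')$ immediately. The middle term is bounded absolutely by $C\sum_m|h(m)|\log(2m)/m\le CC''$. Completing the first sum from $m\le x$ to $m\ge 1$ introduces an error $C\log x\sum_{m>x}|h(m)|/m$; using $\log(2m)\ge\log(2x)$ for $m>x$, this tail is at most $C\log x\cdot C''/\log(2x)\ll CC''$. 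Summing these errors gives $O(C''(C+C'))$.

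For the weighted version, the same swap produces
\begin{equation*}
\sum_{m\le x}\frac{h(m)}{m}\sum_{d\le x/m}\frac{g(d)}{d}\,v\!\Big(\frac{\log d+\log m}{\log x}\Big).
\end{equation*}
Setting $\alpha=\log m/\log x$ and $A(t)=\sum_{d\le t}g(d)/d=C\log t+O(C')$, I plan to evaluate the inner sum by Abel summation: integration by parts expresses it as a boundary term at $t=x/m$ plus $-\int_1^{x/m}A(t)\,v'\!\big((\log t+\log m)/\log x\big)\,dt/(t\log x)$. The substitution $s=\log t/\log x$ turns this into an integral on $[0,1-\alpha]$, and a short integration-by-parts against $v'$ collapses the main contribution to $C\log x\int_\alpha^1 v(s)\,ds$, with an error $O(C')$ controlled by $\|v\|_\infty$ and $\|v'\|_\infty$.

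It then remains to sum the inner asymptotic against $h(m)/m$ and compare $\int_\alpha^1 v$ with $\int_0^1 v$. The discrepancy $\int_0^{\alpha}v(s)\,ds\le\|v\|_\infty\cdot\log m/\log x$ produces a contribution $\ll C\log x\sum_{m\le x}|h(m)|\log m/(m\log x)\ll CC''$, and completing the $m$-sum to infinity costs another $O(CC'')$ by the same tail estimate as above. The delicate point, and the main thing to verify carefully, is that the Abel-summation error in the inner sum is indeed $O(C')$ uniformly in $\alpha\in[0,1]$ and in $m$; this reduces to checking that $\int_0^{1-\alpha}|v'(s+\alpha)|\,ds$ is bounded, which follows from the hypothesis that $v'$ is bounded on $[0,1]$.
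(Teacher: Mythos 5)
The paper gives no proof of this lemma at all: it is introduced as ``un lemme classique'' and used directly, so there is nothing to compare your argument against. Your proof is correct and is exactly the standard argument the authors have in mind --- swap the Dirichlet convolution, apply the hypothesis on $\sum_{d\le x/m}g(d)/d$, and control the tails via $\sum_m |h(m)|\log(2m)/m\le C''$; the partial summation in the weighted case, with the cancellation of the boundary term $C(1-\alpha)\log x\,v(1)$ against the integrated-by-parts main term, goes through as you describe. The only point worth making explicit is that ``continûment dérivable par morceaux'' must be read as $v$ continuous with bounded piecewise derivative (as it is in the paper's application, where $v$ is an integral of an indicator function), since otherwise the Abel summation would pick up extra jump terms; with that reading your error analysis, uniform in $\alpha$ and $m$, is complete.
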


Nous ne r\'edigeons pas tous les d\'etails de la preuve de 
l'estimation de $M(X;V)$.  
Nous renvoyons le
lecteur \`a la section \ref{sectionrho}  
  et reprenons  certaines des notations de cette partie.
Lorsque $\varepsilon>0$, on a
$$\sum_{n\leq x} \frac{r_{ {\rm disc}(Q)} (n)}{ n}=L(1,\chi_{{\rm 
disc}( Q)} )\log
x+O(L_\infty^{\varepsilon})  
$$
avec $\chi_{{\rm disc}( Q)}=(\frac{{\rm disc}( 
Q)}{n})$. 
Nous  appliquons trois fois le Lem\-me~\ref{convol} pour estimer   
la sommation en $d_1$, $d_2$ et
$d_3$ en choisissant pour
$g$ la fonction constante \'egale \`a $1$ pour les deux premi\`eres 
et  la fonction $r_{{\rm disc}(Q)} $
pour la troisi\`eme fois. Nous notons $1_V   $ la fonction 
caract\'eristique des $\mdelta\in V$.

Nous avons
\begin{align*}M(X;V)&=
\sum_{{  d_1\leq Y}}\sum_{{d_2\leq {X'}^{1/2}}}
\sum_{{d_3\leq {X'} }} \frac{(h*r)(\ma{d})}{ d_1d_2d_3 }1_V 
\Big(\frac{\log d_1}{\log X'},\frac{\log
d_2}{\log X'},\frac{\log d_3}{2\log X'}\Big) \\
&=
\sum_{{d_2\leq {X'}^{1/2}}}
\sum_{{d_3\leq {X'} }}\sum_{\colt{e_2\mid d_2}{e_3\mid 
d_3}}\frac{r_{{\rm disc}(Q)}(e_3)}{d_2d_3}
\\&\quad \times  
\sum_{d_1\leq Y}\sum_{e_1\mid 
d_1}\frac{h(d_1/e_1,d_2/e_2,d_3/e_3)}{d_1}
1_V    \Big(\frac{\log d_1}{\log X'},\frac{\log
d_2}{\log X'},\frac{\log d_3}{2\log X'}\Big) .
\end{align*}
Or la somme en $d_1$ et $e_1$ vaut, d'apr\`es  le Lemme \ref{sumhlog} 
et le Lemme
\ref{convol},
\begin{align*}v\Big(\frac{\log
Y}{\log X'}&,\frac{\log
d_2}{\log X'},\frac{\log d_3}{2\log X'}\Big) \log X'
\sum_{k_1\in\N}\frac{h(k_1,d_2/e_2,d_3/e_3)}{k_1}\\&\quad+O\Big(
\sum_{k_1\in\N}\frac{|h(k_1,d_2/e_2,d_3/e_3)|\log (2k_1)}{k_1}
\Big),\end{align*}
o\`u  
$v(x,t_2,t_3):=\int_{t_1\leq x}1_V     (t_1,t_2,t_3 ) \d t_1,$
donc
\begin{align*}  
M(X;V)=&\log X'
\sum_{k_1\in\N}\sum_{{d_2\leq {X'}^{1/2}}}
\sum_{{d_3\leq {X'} }}v\Big(\frac{1}{  2},\frac{\log
d_2}{\log X'},\frac{\log d_3}{2\log X'}\Big)
\\& \times \sum_{\colt{e_2\mid 
d_2}{e_3\mid d_3}}
\frac{r_{{\rm disc}(Q)}(e_3)}{d_2d_3}\frac{h(k_1,d_2/e_2,d_3/e_3)}{k_1}
+O(L_\infty^{ \varepsilon}
(\log X')^2\log\log X),
\end{align*}
o\`u nous avons utilis\'e l'estimation
$$ 0\leq v(\dm,t_2,t_3)-v(x,t_2,t_3) \leq \dm-x.$$
En r\'eit\'erant deux fois cette manipulation, nous obtenons
\begin{align*}M(X;V)
&= {2(\log {X'})^3}\vol(V \cap   [0,\dm]^3 )\prod_p \sigma_p+ O( 
L_\infty^{ \varepsilon}
(\log X)^{2}\log\log X) \end{align*}
et
\begin{align*}
M(X;V')
&= {2(\log {X'})^3}\vol(V' \cap   [0,\dm]^3 )\prod_p \sigma_p+ O( 
L_\infty^{ \varepsilon}
(\log X)^{2}\log\log X). 
\end{align*}
En utilisant la majoration \eqref{prelim} pour  $\vol(\mcal{R})$, et 
en observant que
$$\vol(V \cap   [0,\dm]^3 )+\vol(V' \cap   [0,\dm]^3 )=
\vol(V \cap   [0,\dm]^2\times[0,1] )=\vol(V),$$ 
nous obtenons l'estimation
recherch\'ee au Th\'eor\`eme~\ref{main0}.

\section{D\'emonstration du   Th\'eor\`eme \ref{maind}}\lab{sectionmaind}
Notre d\'emarche est, bien entendu, d'appliquer le Th\'eor\`eme \ref{main0}.
La premi\`ere \'etape permet de se ramener au cas o\`u 
$(D_i,\ell_i)=1$ et $(D_3,q)=1$.
Avec la notation \eqref{ecritprim}, nous avons
$ 
\mathsf{\Lambda}(\ma{D};L_1,L_2,Q)=\mathsf{\Lambda}(\ma{D'};L_1^*,L_2^*,Q^*),$
avec la notation \eqref{defD'}.
Posant
\begin{equation}\label{defla*}\mathsf{\Lambda}^*({\ma{D'}}):=\{
\x\in\mathsf{\Lambda}({\ma{D'}};L_1^*,L_2^*,Q^*)\,:\, 
(x_1,x_2,D_1'D_2'D_3')=1\},
\end{equation}
nous avons
$$
\mathsf{\Lambda}({\ma{D'}})= \bigsqcup_{b\mid
\psi(\ma{D'})}b\mathsf{\Lambda}^*({\ma{D''}};L_1',L_2',Q')
$$ 
et
$$
  \psi(\ma{D'})=\prod_{p\mid 
D_1'D_2'D_3'}p^{\max\{v_p(D_1'),v_p(D_2'),\lceil v_p(D_3')/2 \rceil \}}
$$
o\`u
\begin{equation*} 
\ma{D''}=\Big(\frac{D_1'}{(D_1',b)},
\frac{D_2'}{(D_2',b)},
\frac{D_3'}{(D_3',b^2)}\Big)
\end{equation*}
et
$$\quad L_i'=\frac{bL_i}{(d_i,b)},\quad
Q'=\frac{b^2Q}{(d_3,b^2)}.$$
Il en d\'ecoule
$$S(X,\ma{d},\ma{D};V)=\sum_{b\mid \psi(\ma{D'})}\sum_{\x\in 
\mathsf{\Lambda}^*({\ma{D''}} )\cap
(X/b)\mcal{R}}
\tau\Big(\frac{L_1'(\x)}{d_1'},\frac{L_2'(\x)}{d_2'},\frac{Q'(\x) 
}{d_3'}; V\Big)$$
o\`u
$$\ma{d'}=\Big(\frac{d_1}{(d_1,b)},
\frac{d_2}{(d_2,b)},\frac{d_3}{(d_3,b^2)}\Big).$$

Soient $D'=D_1'D_2'D_3'$ et $D''=D_1''D_2''D_3''$.  
Nous partitionnons en classes 
d'\'equivalence disjointes.
Lorsque $\x$ et $\y\in \Z^2$ satisfont $(\x,D'')=(\y,D'')=1$, nous 
d\'efinissons la relation
d'\'equivalence par $\x\sim \y$ si et seulement s'il existe 
$\lambda\in\Z$ tel que
$\x \equiv \lambda  \by \pmod{D''}.$
C'est une relation d'\'equivalence et les $\lambda$ doivent 
v\'erifier $(\lambda,D'')=1$.
Nous notons $\cU(D'') $ cet ensemble de classes d'\'equivalence et 
pour tout   $\by \in
\cA$ fix\'e   avec $\cA \in \cU(D'')$, il est facile de v\'erifier
$$
\cA = \{\x \in \Z^2 : \x \equiv a \by \smod{D''}\text{ avec }a\in\Z
\text{ o\`u }(a,D'') = 1 \}.
$$
Posant
$$\cV(\ma{D''})=\{\cA\in \cU(D'')\,:\, \cA\subseteq 
\mathsf{\Lambda}^*({\ma{D''}})\},$$
nous d\'efinissons pour $\cA\in \cV(\ma{D''})$ et $\y_0\in \cA$
$$
   G(\cA) = \{ \x \in \Z^2 : \exists a \in \Z
   \text{ tel que }  \x \equiv
   a \by_0 \smod{D''}  \}.$$
L'ensemble $
   G(\cA)$ est un r\'eseau de dimension $2$ de d\'eterminant $D''$ et
  $\cA = \{ \x \in G(\cA) : (\x , D'') =
1 \}$. Nous pouvons donc \'ecrire
\begin{equation}\label{e1}
S(X,\ma{d},\ma{D};V)=\sum_{b\mid \psi(\ma{D'})}\sum_{\cA\in \cV(\ma{D''})}
\sum_{e\mid D''}\mu(e)T(X,\cA,e;V)
\end{equation}
o\`u
$$ 
T(X,\cA,e;V):=\sum_{\x\in G_e(\cA) \cap
(X/b)\mcal{R} }
\tau\Big(\frac{L_1'(\x)}{d_1'},\frac{L_2'(\x)}{d_2'},\frac{Q'(\x) 
}{d_3'};V\Big)$$
avec
\begin{align*}G_e(\cA)&=G (\cA)\cap \{ \x\in \Z^2\,:\, e\mid \x\}
\\&= \{ \x\in \Z^2\,:\,
\exists a \in e\Z  \text{ tel que }  \x \equiv  a \by_0 \smod{D''}  \}.
\end{align*}
Nous sommes ainsi ramen\'es \`a un comptage sur un r\'eseau de 
d\'eterminant $D''e$.
Pr\'eparons maintenant l'utilisation du Th\'eor\`eme \ref{main0} pour 
estimer $T(X,\cA,e;V)$.  
Pour cela, nous
d\'efinissons $E$ un changement de va\-riables de sorte que
$$\x\in G_e(\cA)\Longleftrightarrow \x=E\v \text{ avec } \v\in\Z^2,   $$
o\`u $E=(\ma{e}_1,\ma{e}_2)$  
est la matrice de changement dans une base 
minimale de vecteurs. Ainsi $|\ma{e}_1|$ et
$|\ma{e}_2|$ sont les minima successifs des normes des \'el\'ements 
du r\'eseau $G_e(\cA)$. Puisque
$G_e(\cA)\subseteq \delta(\ma{D}),$ nous avons
\begin{equation}\label{reseau}
\delta(\ma{D})\leq |\ma{e}_1|\leq |\ma{e}_2|,\qquad 
|\ma{e}_1|.|\ma{e}_2|\gg \det G_e(\cA)=D''e.\end{equation}

  Posant $\mcal{R}'_E:=\{
\v\in\R^2\,:\, E\v\in
\mcal{R} /b\}$, nous avons
$$\vol( \mcal{R}'_E)=\frac{\vol( \mcal{R})}{b^2\det E}= \frac{\vol( 
\mcal{R})}{b^2D''e}.$$
Nous pouvons \'ecrire
$$T(X,\cA,e;V)=\sum_{\v\in \Z^2\cap
  \mcal{R}'_E }
\tau(M_1(\v),M_2(\v),M_3(\v) ;V)$$
avec
$M_i(\v)={L_i'(E\v)}/{d_i'}$,   $M_3(\v)={Q'(E\v)}/{d_3'}$.
Les quantit\'es intervenant dans le terme d'erreur du Th\'eor\`eme 
\ref{main0} satisfont
les in\'egalit\'es suivantes. D'une part
\begin{align*}r'(\ma{M},\mcal{R}'_E)&=
\sup_{\v\in\mcal{R}'_E}\max\{|M_1(\v)|,|M_2(\v)|,\sqrt{|M_3(\v)|}\}\\
&=\sup_{\x\in\mcal{R}/b 
}\max\{|L_1'(\ma{x})|/d_1',|L_2'(\ma{x})|/d_2',\sqrt{|Q'(\ma{x})|/d_3'}\}
\\
&= 
\sup_{\x\in\mcal{R}}\max\{|L_1(\ma{x})|/d_1,|L_2(\ma{x})|/d_1,\sqrt{|Q(\ma{x})|/d_3}\}\\&
= r'_{\ma{d}}(L_1,L_2,Q,\mcal{R})=  r'_{\ma{d}},
\end{align*}
o\`u
\begin{equation}\label{defr'd}r'_{\ma{d}}
:=\sup_{\x\in\mcal{R}}\max\{|L_1(\ma{x})|/d_1,|L_2(\ma{x})|/d_2,\sqrt{|Q(\ma{x})|/d_3}\} 
.
\end{equation}
D'autre part
\begin{align*}L_\infty(\ma{M} )=
\max\{\|M_i\|\}&\leq \max\{\|L_1\|,\|L_2\|,\|Q\|\}\|E\|
\\&\ll L_\infty(L_1,L_2, Q)D''e\leq  L_\infty {D}^2.
\end{align*}
Enfin
\begin{align*}r_\infty( \mcal{R}'_E)&=
\sup_{\v\in\mcal{R}'_E}\max\{|v_1|,|v_2|\}
\ll \frac{r_\infty( \mcal{R} )}{b}=\frac{r_\infty }{b}.
\end{align*}
Posant
$$ 
\sigma_p(E):=\Big(1-\frac{1}{p}\Big)^3
\sum_{\mnu\in\Z_{\geq 0}^3} \frac{\rho
(p^{\nu_1},p^{\nu_2},p^{\nu_3},\ma{M})}{p^{2\nu_1+2\nu_2+2\nu_3}},$$
le Th\'eor\`eme \ref{main0} fournit lorsque 
$r'_{\ma{d}}X^{1-2\varepsilon }\geq 1$
\begin{align*}
  T(X,\cA,e;V)=&2 \vol( \mcal{R})\vol(V)W(E)X^2(\log (r'_{\ma{d}}X))^3
\\&+
O\Big(\frac{L_\infty^{ \varepsilon}D^{ \varepsilon} }{b } (r_\infty 
r'_{\ma{d}}+r_\infty^2)    X^2
(\log X)^{2}
\log\log X\Big),\end{align*}
avec
$$W(E):=\frac{1}{b^2D''e} \prod_{p}\sigma_p(E).$$

Notre t\^ache maintenant est de montrer que le nombre de termes somm\'es dans le
membre de droite de  \eqref{e1} est petit.  
\begin{lem} On a
$$
\#\cV(\ma{D})\leq 8^{\omega(D_1D_2D_3)}
a(\ma{D},\mDelta),
$$
avec la notation \eqref{def1aDD}.
\end{lem}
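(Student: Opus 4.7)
Le plan est d'exploiter la multiplicativité et le théorème des restes chinois pour se ramener à une estimation locale en chaque nombre premier $p$ divisant $D := D_1 D_2 D_3$. Posons $\nu_i := v_p(D_i)$ et $\nu := \nu_1+\nu_2+\nu_3$. On observe que chaque classe d'équivalence comptée par la partie locale $\cV_p$ de $\cV(\ma{D})$ en $p$ contient exactement $\varphi(p^\nu)$ vecteurs primitifs de $[0,p^\nu)^2$ : en effet, un vecteur primitif modulo $p$ a au moins une coordonnée inversible modulo $p^\nu$, et le stabilisateur sous multiplication par une unité est donc trivial. Il en résulte
$$\#\cV_p = \frac{\rho^*(p^{\nu_1}, p^{\nu_2}, p^{\nu_3})}{\varphi(p^\nu)},$$
puis, par multiplicativité de $\rho^*$ et de $\varphi$,
$$\#\cV(\ma{D}) = \prod_{p \mid D} \frac{\rho^*(p^{\nu_1}, p^{\nu_2}, p^{\nu_3})}{\varphi(p^\nu)}.$$

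La deuxième étape consiste à majorer chaque facteur local via le point (5) du Lemme \ref{rho*premier}. Selon la position relative de $\nu_3$ par rapport à $\max\{\nu_1,\nu_2\}$, on dispose de deux bornes distinctes : si $\nu_3 \leq \max\{\nu_1,\nu_2\}$, alors $\#\cV_p \leq p^{\nu_3 + \min\{\nu_1,\nu_2,v_p(\Delta_{12})\}}$ ; si au contraire $\nu_3 > \max\{\nu_1,\nu_2\}$, alors $\#\cV_p \leq 8\, p^{\nu_1+\nu_2+\min\{[v_p(\Delta)/2],[\nu_3/2]\}}$. La conclusion suivra de ce que chacune de ces majorations n'excède pas $8 \cdot p^{v_p(a(\ma{D},\mDelta))}$, le produit sur les $\omega(D_1D_2D_3)$ facteurs premiers produisant alors le facteur $8^{\omega(D_1D_2D_3)}$ annoncé.

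Le point le plus délicat sera la vérification de cette dernière inégalité locale, notamment dans les configurations où $\nu_3$ dépasse $v_p(\Delta) + \min\{v_p(\Delta_{13}),v_p(\Delta_{23})\}$. Pour clore ce cas, on combinera deux ingrédients : d'une part les contraintes de non-vacuité de $\mathsf{\Lambda}^*$ données par le point (4) du Lemme \ref{rho*premier}, à savoir $\min(\nu_i,\nu_j) \leq v_p(\Delta_{ij})$ pour $1 \leq i < j \leq 3$ ; d'autre part l'identité
$$E^2 = 4\Delta_{13}\Delta_{23} + \Delta \cdot \Delta_{12}^2,$$
obtenue en substituant $x_1 = (b_2L_1-b_1L_2)/\Delta_{12}$ et $x_2 = (a_1L_2-a_2L_1)/\Delta_{12}$ dans $Q$ pour aboutir à la relation $\Delta_{12}^2 Q = \Delta_{23} L_1^2 + E\, L_1 L_2 + \Delta_{13} L_2^2$, puis en calculant le discriminant de cette forme quadratique en $(L_1,L_2)$. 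La parité de $v_p(E^2)$, conjointement avec les contraintes de non-vacuité, contraint suffisamment les valuations $p$-adiques des différents résultants et du discriminant $\Delta$ pour en déduire la majoration recherchée et conclure.
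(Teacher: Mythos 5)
La première moitié de votre argument coïncide exactement avec la démonstration du papier : l'identité $\#\cV(\ma{D})=\rho^*(\ma{D})/\varphi(D_1D_2D_3)$, la réduction aux puissances d'un nombre premier par multiplicativité, puis l'appel aux points (4) et (5) du Lemme \ref{rho*premier} ; cette partie est correcte (et votre justification du cardinal $\varphi(p^\nu)$ de chaque classe est un détail que le papier omet).

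La lacune se situe précisément dans l'étape que vous signalez vous-même comme « le point le plus délicat », et que vous ne menez pas à terme. Vous ramenez tout à une inégalité purement valuative — l'exposant de la borne du point (5) doit être majoré par $v_p(a(\ma{D},\mDelta))$ — que vous proposez de déduire des contraintes $\min(\nu_i,\nu_j)\leq v_p(\Delta_{ij})$ du point (4) et de l'identité $E^2=4\Delta_{13}\Delta_{23}+\Delta\,\Delta_{12}^2$ via la parité de $v_p(E^2)$. Or ces ingrédients ne suffisent pas. Prenez $p$ impair, $Q=x_1^2-dx_2^2$ avec $p\nmid d$, $d\equiv r^2 \tmod{p^8}$ et $v_p(r^2-d)=8$, $L_1=x_1-rx_2$, $L_2=x_1+(p-r)x_2$, et $(\nu_1,\nu_2,\nu_3)=(10,1,8)$ : alors $v_p(\Delta)=0$, $v_p(\Delta_{12})=v_p(\Delta_{23})=1$, $v_p(\Delta_{13})=8$, toutes les contraintes du point (4) sont satisfaites et $v_p(E^2)=2$ est pair, mais la borne du point (5) vaut $p^{\nu_3+\min\{\nu_1,\nu_2,v_p(\Delta_{12})\}}=p^{9}$ alors que $v_p(a(\ma{D},\mDelta))=1+1+\min\{8,\,0+\min\{8,1\}\}=3$. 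Pire, dans cette configuration les conditions $p\mid L_2(\x)$ et $p^{8}\mid Q(\x)$ sont des conséquences automatiques de $p^{10}\mid L_1(\x)$ et $p\nmid(x_1,x_2)$, de sorte que $\rho^*(p^{10},p,p^{8})/\varphi(p^{19})$ vaut exactement $p^{9}$ : aucune comptabilité supplémentaire sur les valuations des résultants ne peut donc sauver le plan, et l'inégalité de l'énoncé lui-même est mise en défaut à moins de lire le troisième facteur de \eqref{def1aDD} comme $(D_3,\Delta\,\Delta_{13}\Delta_{23})$ plutôt que comme $(D_3,\Delta\,(\Delta_{13},\Delta_{23}))$. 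Notez que la démonstration du papier est tout aussi laconique à cet endroit — elle vérifie les deux ordonnancements $\nu_1\geq\nu_2\geq\nu_3$ et $\nu_3>\max\{\nu_1,\nu_2\}$ et déclare les autres « analogues », sans recourir à votre identité — si bien que votre plan en reproduit fidèlement la stratégie ; mais la vérification que vous reportez est exactement là où se trouve la difficulté, et le mécanisme que vous proposez pour la clore est démontrablement insuffisant.
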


\begin{proof} 
La relation
$$\#\cV(\ma{D})=\frac{\rho^*(\ma{D})}{\varphi(D_1D_2D_3)}=
\prod_{p^{\nu_i}\parallel
D_i}\frac{\rho^*(p^{\nu_1},p^{\nu_2},p^{\nu_3})}{\varphi(p^{\nu_1+\nu_2+\nu_3})}$$
permet de se restreindre \`a des puissances d'un nombre premier $p$.
De plus, d'apr\`es le Lemme \ref{rho*premier}, nous avons $$
v_p(\Delta_{12})\geq \min\{\nu_1,\nu_2\}, \qquad
v_p(\Delta_{i3})\geq \min\{\nu_i,\nu_3\}$$ \`a moins que
$\rho^*(p^{\nu_1},p^{\nu_2},p^{\nu_3})=0$. 
Examinons deux cas, les autres s'obtenant de mani\`ere analogue.
Lorsque $\nu_1\geq \nu_2\geq \nu_3$, le Lemme \ref{rho*premier} fournit 
$$\frac{\rho^*(p^{\nu_1},p^{\nu_2},p^{\nu_3})}{\varphi(p^{\nu_1+\nu_2+\nu_3})}\leq
p^{ \nu_3+\min\{\nu_1,\nu_2, v_p(\Delta_{12})\}}=p^{ \nu_3+ \nu_2} ,$$ ce qui
convient.
 Lorsque $   \nu_3>\max\{\nu_1,\nu_2\}$, le Lemme \ref{rho*premier} fournit 
$$\frac{\rho^*(p^{\nu_1},p^{\nu_2},p^{\nu_3})}{\varphi(p^{\nu_1+\nu_2+\nu_3})}\leq
8p^{ \nu_1+ \nu_2} p^{\min\{[v_p({\rm disc}( Q))/2],[\nu_3/2]\}} 
,$$ ce qui convient encore.
\end{proof}

En reportant dans \eqref{e1}, nous obtenons sous la condition 
$r'_{\ma{d}}X^{1-2\varepsilon }\geq 1$
\begin{equation}
  \begin{split}
S(X,\ma{d},\ma{D};V)=& 2 \vol( \mcal{R})\vol( V)W X^2(\log 
(r'_{\ma{d}}X))^3\\&+ O\big(
L_\infty^{\varepsilon} D^{ 2\varepsilon }
a'(\ma{D},\mDelta) 
(r_\infty 
r'_{\ma{d}}+r_\infty^2) X^2 (\log X)^{2}
\log\log X\big)\end{split}\label{SXdD}\end{equation}
avec
$$W:=\sum_{b\mid \psi(\ma{D'})}\sum_{\cA\in \cV(\ma{D''})}
\sum_{e\mid D''}\mu(e)W(E).$$

Nous introduisons la fonction multiplicative $\psi_0$   d\'efinie par
\begin{equation}\label{defpsi0} 
\psi_0(p^{\beta_1},p^{\beta_2},p^{\beta_3})=
  \max_{\beta \leq \max\{\beta_1,\beta_2,\lceil \beta_3/2 \rceil\}} p^{\min\{\beta,\beta_1\}+\min\{\beta,\beta_2\}+\min\{2\beta,\beta_3\}-2\beta}.
\end{equation}
On observe que lorsque $D_3$ est sans facteur carr\'e, on a
\begin{equation}\label{psi0sscarre}
\psi_0(D_1,D_2,D_3)= (D_1,D_2,D_3) .
\end{equation}
  De plus, nous avons l'in\'egalit\'e
\begin{equation}\label{psi01/2}
\psi_0(D_1,D_2,D_3)\leq  (D_1 D_2 D_3)^{1/2}.\end{equation}
cons\'equence de
$ \min\{\beta,\beta_1\}+\min\{\beta,\beta_2\}+\min\{2\beta,\beta_3\}\leq 2\beta
+(\beta_1+\beta_2+\beta_3)/2.$ Nous utiliserons
\begin{equation}\label{D''b2} D''b^2=
\frac{b^2D'}{(D_1',b)(D_2',b)(D_3',b^2)}
\geq   \frac{D'}{\psi_0(D_1',D_2',D_3')}.
\end{equation}
Observons maintenant que la majoration
\begin{equation}\label{majW}
W\ll L_\infty^{\varepsilon}D^{\varepsilon }\frac{\psi_0(D_1',D_2',D_3')}{D'}
a'(\ma{D},\mDelta)
\ll L_\infty^{\varepsilon}D^{\varepsilon }
a'(\ma{D},\mDelta)
\end{equation}
d\'ecoule de la deuxi\`eme majoration du Lemme \ref{sumhlog} et de la relation
\eqref{D''b2}.

La quantit\'e d\'efinie en \eqref{defr'd} v\'erifie
  $$
\frac{r'}{d_1d_2d_3}\leq
r'_{\ma{d}}
\leq r'.
$$
Puisque $W\ll L_\infty^{\varepsilon} D^{\varepsilon}a'(\ma{D},\mDelta)
$ d'apr\`es 
\eqref{majW}, on en d\'eduit qu'on peut
remplacer dans
\eqref{SXdD} la quantit\'e
$r'_{\ma{d}}$ par $r'$.  
Sous la condition suppl\'ementaire $r'_{\ma{d}}X^{1-2\varepsilon }\geq 1$
nous obtenons
\begin{equation}
  \begin{split}
S (X,\ma{d},\ma{D};V)
=& 2 \vol(\mcal{R})\vol(V)
  X^2(\log r'X)^3 W
  \\& +
O\Big(   {(DL_\infty)^{\varepsilon}}a'(\ma{D},\mDelta)
 (r_\infty r'+r_\infty^2)   
X^2 (\log X)^{2}
\log\log X\Big).
  \end{split}
  \label{est2S(X)d} \end{equation}

Nous constatons que, lorsque $d_1d_2d_3\leq X^{\varepsilon }$,  la 
condition $r'_{\ma{d}}X^{1-2\varepsilon }\geq 1$ est une
cons\'equence de
$r' X^{1-
\varepsilon }\geq 1$. Il reste \`a remarquer que si   $d_1d_2d_3> 
X^{\varepsilon }$ d'apr\`es \eqref{majSXtriv} nous avons
\begin{align*}
S (X,\ma{d},\ma{D};V) \ll    S(X)&\ll L_\infty^\varepsilon 
r_\infty^2X^2(\log X)^3\\
&\ll L_\infty^\varepsilon D^\varepsilon r_\infty^2X^2(\log 
X)^2(\log\log X).\end{align*}
ce qui fournit \eqref{est2S(X)d} dans ce cas-l\`a puisque $W\ll 
L_\infty^{\varepsilon} D^\varepsilon a'(\ma{D},\mDelta) $ 
d'apr\`es \eqref{majW}.

Pour obtenir le facteur  $1/\delta(\ma{D})$ dans le terme d'erreur, 
il suffit de constater que
$$S(X,\ma{d},\ma{D};L_1,L_2,Q,\mcal{R},V)=S(X,\ma{d},\ma{D};\delta 
L_1,\delta L_2,
\delta^2Q,\mcal{R}/\delta, V),$$
avec $\delta=\delta(\ma{D})$. La valeur de $r'$ est alors inchang\'ee et  la
quantit\'e~$r_\infty$ est alors divis\'ee par
$\delta(\ma{D})$. Enfin, il est clair que les constantes   
obtenues dans le terme principal en facteur
de $ X^2(\log r'X)^3$  
sont les m\^emes puisqu'elles ne d\'ependent pas de $X$.

Il est utile d'avoir une majoration uniforme de 
$  
S (X,\ma{d},\ma{D}):=
S (X,\ma{d},\ma{D};[0,1]^3).
$ 
Notre d\'emonstration fournit facilement la majoration suivante.

\begin{lem}\lab{majSXdD}
Soit $\varepsilon>0$. Lorsque  $L_1, L_2,Q,\mcal{R}$ satisfont 
(H1)--(H3), $\ma{d}$ et $\ma{D}$ tels que
$d_i\mid D_i$, on a
\begin{equation*}
  \begin{split}
S (X,\ma{d},\ma{D})\ll(DL_\infty)^{\varepsilon}
a'(\ma{D},\mDelta)
\Big( \frac{ \psi_0(D_1',D_2',D_3')}{D_1'D_2'D_3'}     (r_\infty X)^2 (\log
X)^{3}+ \frac{(r_\infty X)^{1+\varepsilon}}{\delta(\ma{D})}\Big),
  \end{split}  \end{equation*}
o\`u $\ma{D'}$ a \'et\'e d\'efini en \eqref{defD'} et $\psi_0$  a 
\'et\'e d\'efini en \eqref{defpsi0} et
satisfait \eqref{psi01/2}.
\end{lem}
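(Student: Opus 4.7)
L'id\'ee est d'extraire une majoration de la d\'emonstration du Th\'eor\`eme \ref{maind}, sans chercher \`a obtenir de terme principal explicite. On commence par invoquer le Lemme \ref{nonprimitive} pour se ramener aux formes primitives $L_i^*, Q^*$ avec la donn\'ee $\ma{D'}$, puis on applique la d\'ecomposition \eqref{e1} qui exprime $S(X, \ma{d}, \ma{D})$ comme une somme triple sur $b \mid \psi(\ma{D'})$, $\cA \in \cV(\ma{D''})$, $e \mid D''$ de termes $T(X, \cA, e; [0,1]^3)$, chacun \'etant une somme sur un r\'eseau $G_e(\cA) \subseteq \Z^2$ de d\'eterminant $D''e$ et de premier minimum au moins $\delta(\ma{D})$ par \eqref{reseau}.

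Dans le domaine $r' X^{1-\varepsilon} \geq 1$, le Th\'eor\`eme \ref{main0} s'applique \`a chaque somme sur le r\'eseau transform\'e et toute la d\'erivation du Th\'eor\`eme \ref{maind} passe telle quelle avec $V = [0,1]^3$\,: le terme principal $2\vol(\cR) X^2 (\log r'X)^3 W$ combin\'e \`a la majoration \eqref{majW} de $W$ fournit exactement la premi\`ere partie de l'\'enonc\'e, tandis que le terme d'erreur secondaire est absorb\'e gr\^ace aux in\'egalit\'es $r' \ll r_\infty L_\infty$ de \eqref{prelim} et $(\log X)^2 \log\log X \ll (\log X)^3$.

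Lorsque $r'X^{1-\varepsilon} < 1$, le Th\'eor\`eme \ref{main0} ne s'applique plus et on recourt \`a une majoration triviale. Le d\'enombrement standard de type Minkowski combin\'e \`a \eqref{reseau} donne
$$\#\big(G_e(\cA) \cap (X/b)\cR\big) \ll 1 + \frac{r_\infty X}{b\,\delta(\ma{D})} + \frac{(r_\infty X)^2}{b^2 D''e},$$
et chacun de ces points contribue au plus $(DL_\infty r_\infty X)^\varepsilon$ au produit de diviseurs gr\^ace \`a la majoration grossi\`ere $\tau(n) \ll n^{\varepsilon}$. En sommant sur $b \mid \psi(\ma{D'})$, sur $\cA \in \cV(\ma{D''})$ (dont le cardinal est major\'e par $\#\cV(\ma{D''}) \leq 8^{\omega(D)} a(\ma{D''}, \mDelta)$, estimation \'etablie plus haut dans la section), et sur $e \mid D''$, puis en utilisant \eqref{D''b2} pour convertir $D''b^2$ en $D'/\psi_0(D_1', D_2', D_3')$, on obtient la seconde partie de l'\'enonc\'e ainsi qu'une contribution absorb\'ee dans la premi\`ere.

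Le principal obstacle est purement comptable\,: il s'agit de suivre les facteurs arithm\'etiques $\psi_0(D_1', D_2', D_3')/(D_1'D_2'D_3')$ et $a'(\ma{D}, \mDelta)$ \`a travers la sommation sur $b, \cA, e$ sans perdre plus d'un facteur multiplicatif $(DL_\infty)^\varepsilon$. Ceci est enti\`erement parall\`ele \`a l'estimation d\'ej\`a effectu\'ee dans la preuve du Th\'eor\`eme \ref{maind} et au Lemme \ref{sumhlog}\,; aucun ingr\'edient nouveau n'est requis au-del\`a du comptage trivial des points du r\'eseau.
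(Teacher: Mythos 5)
There is a genuine gap in the main case of your argument. For $r'X^{1-\varepsilon}\geq 1$ you propose to rerun the proof of Theorem \ref{maind} with $V=[0,1]^3$ and to read the upper bound off the main term $2\vol(\mcal{R})W X^2(\log r'X)^3$ via \eqref{majW}. But that derivation also produces the error term
$$O\Big(\frac{(DL_\infty)^{\varepsilon}}{\delta(\ma{D})}\,a'(\ma{D},\mDelta)\,(r_\infty r'+r_\infty^2)\,X^2(\log X)^2\log\log X\Big),$$
which carries no decay in $D'=D_1'D_2'D_3'$. Since $\psi_0(D_1',D_2',D_3')\leq (D')^{1/2}$ by \eqref{psi01/2}, the first term of the lemma can be as small as $(r_\infty X)^2(\log X)^3/(D')^{1/2}$, so for $D'\gg (\log X)\log\log X$ (and $r_\infty X$ large) the displayed error term is dominated by neither term of the asserted bound: it is not $\ll a'(\ma{D},\mDelta)\frac{\psi_0(D_1',D_2',D_3')}{D'}(r_\infty X)^2(\log X)^3$ nor $\ll (r_\infty X)^{1+\varepsilon}/\delta(\ma{D})$. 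An upper bound every term of which decays with $D'$ (or is of size $(r_\infty X)^{1+\varepsilon}$) cannot be extracted from the asymptotic formula of Theorem \ref{main0}; that is exactly why the paper proves the lemma directly: each $T(X,\cA,e;[0,1]^3)$ in \eqref{e1} is bounded by a sum of $\tau''(M_1(\v)M_2(\v)M_3(\v))$ over a box of sides $\ll r_\infty X/(|\ma{e}_i|b)$, to which Theorem 1 of \cite{nair} applies and yields $\frac{(DL_\infty)^{\varepsilon}}{D''b^2}(r_\infty X)^2(\log X)^{3}+(DL_\infty)^{\varepsilon}(r_\infty X)^{1+\varepsilon}/\delta(\ma{D})$, with no additive error floor; one then sums over the $O(D^{\varepsilon}a'(\ma{D},\mDelta))$ triples $(b,\cA,e)$ and uses \eqref{D''b2}.

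Your treatment of the complementary range is closer in spirit to the paper's proof, but the trivial bound $\tau(n)\ll n^{\varepsilon}$ costs a factor $(r_\infty X)^{O(\varepsilon)}$ on the area term $(r_\infty X)^2/(b^2D''e)$, and $(r_\infty X)^{O(\varepsilon)}$ is not $\ll(\log X)^{3}$; recovering the stated power of the logarithm again requires the Nair-type estimate (or a separate argument exploiting $r'X<X^{\varepsilon}$). In fact no case distinction on $r'X$ is needed: the argument via Theorem 1 of \cite{nair} is uniform in $X$.
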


\begin{proof} Dans toute la d\'emonstration, nous omettons de 
signaler la d\'e\-pendance en $V=[0,1]^3$. 
  Au vu de \eqref{e1}, il suffit de majorer $T(X,\cA,e;V)$.  
Posant
\begin{equation}\label{deftau''}
\tau''(p^\nu):=\left\{\begin{array}{ll}
2, & \mbox{si $\nu=1$},\\
(\nu+1)^3, & \mbox{si $\nu\geq 2$},
\end{array}\right.\end{equation}
nous avons
$$ 
T(X,\cA,e;V) 
\leq\sum_{\tolt{\v\in \Z^2 }{v_1\ll r_\infty 
X/(|\ma{e}_1|b)}{v_2\ll r_\infty
X/(|\ma{e}_2|b)}}
\tau''(M_1(\v) M_2(\v) M_3(\v) ).$$
Compte tenu de \eqref{reseau}, le Th\'eor\`eme 1 de \cite{nair} fournit alors
$$ 
T(X,\cA,e;V)
\ll \frac{(DL_\infty)^{\varepsilon}}{D''b^2} (r_\infty X)^2 (\log
X)^{3}+(DL_\infty)^{\varepsilon}\frac{(r_\infty X)^{1+\varepsilon}}{ 
\delta(\ma{D})}.$$
Ici, nous avons utilis\'e les relations
$$\delta(\ma{D},L_1,L_2,Q)=\delta(\ma{D'},L_1^*,L_2^*,Q^*)\leq 
b\delta(\ma{D''},L_1',L_2',Q').$$
Puisque le nombre de termes somm\'es dans \eqref{e1} est
$O(D^\varepsilon a'(\ma{D},\mDelta))$   
et compte tenu de
\eqref{D''b2}, nous obtenons bien la majoration annonc\'ee.
\end{proof}

La fin de la d\'emonstration du   Th\'eor\`eme \ref{maind} est 
consacr\'ee au calcul de $W$.
\'Etudions les termes somm\'es dans $\sigma_p(E)$.
Nous rappelons les notations \eqref{defNi} et introduisons les 
notations suivantes
\begin{align*}&  \mu'_i=v_p(D_i'),\quad\mu''_i=v_p(D_i''),  \quad
\lambda'_i=v_p(d_i'),\quad  \mu =v_p(D ),\quad \mu' =v_p(D' ),\\& 
\mu'' =v_p(D''),\quad
\varepsilon=v_p(e),\quad
\beta=v_p(b),\quad \nu=\nu_1+\nu_2+\nu_3\end{align*}
o\`u, par souci de clart\'e, nous avons omis d'indiquer la 
d\'ependance en $p$ de ces va\-luations.

  On a
$$\rho
(p^{\nu_1},p^{\nu_2},p^{\nu_3},\ma{M})=\#(\mathsf{\Lambda}'(p^{\nu_1},p^{\nu_2},p^{\nu_3})\cap
\mathcal{B}(p^{\nu_1+\nu_2+\nu_3}))$$
o\`u
$$\mathsf{\Lambda}'(p^{\nu_1},p^{\nu_2},p^{\nu_3})=  \{ \x \in \Z^2 : 
p^{\nu_i+\lambda_i} \mid p^\beta L_i(\x) ,
~ p^{\nu_3+\lambda_3}\mid p^{2\beta}Q(\x) \} $$
et
$$\mathcal{B}(p^{\nu}) =\{ E\v\, :\, 0\leq v_i<p^{\nu}\}.$$
Le th\'eor\`eme de la base adapt\'ee permet d'affirmer qu'il existe 
une base $(\ma{e}_1,\ma{e}_2)$ de~$\Z^2$ telle qu'il existe 
$(\delta_1p^{m_1},\delta_2p^{m_2})\in\N$  
pour lequel la 
famille
$(\delta_1p^{m_1}\ma{e}_1,\delta_2p^{m_2}\ma{e}_2)$ est une base de
$E\Z^2$ et $(\delta_1\delta_2,p)=1$. De plus, on a
$\delta_1\delta_2p^{m_1+m_2}=\det E=D''e$  
  et donc~$m_1+m_2=\mu'' +\varepsilon$. Nous pouvons remplacer 
$\mathcal{B}(p^{\nu_1+\nu_2+\nu_3})$ par
$$\{ \ma{w}=w_1\delta_1p^{m_1}\ma{e}_1+w_2\delta_2p^{m_2}\ma{e}_2\, 
:\, 0\leq w_j<p^{\nu}\}$$
puis par
\begin{align*}&\{ \ma{w}=w_1 p^{m_1}\ma{e}_1+w_2 p^{m_2}\ma{e}_2\, :\, 0\leq
w_i<p^{\nu}\}=\mathcal{B}'(p^{m_1+\nu},p^{m_2+\nu})
\end{align*}
o\`u
$$\mathcal{B}'(p^{k_1},p^{k_2})  :=\{ 
\ma{w'}=w_1'\ma{e}_1+w_2'\ma{e}_2\in E\Z^2\, :\, 0\leq w_j' 
<p^{k_j}\}.$$
Nous en d\'eduisons les relations
\begin{align*}\rho
(p^{\nu_1},p^{\nu_2},p^{\nu_3},\ma{M})&=\#\big(\mathsf{\Lambda}' 
{(p^{\nu_1},p^{\nu_2},p^{\nu_3})}\cap
\mathcal{B}'(p^{\nu+m_1},p^{\nu+m_2})\big)\\
&=\frac{\#\big(\mathsf{\Lambda}'  {(p^{\nu_1},p^{\nu_2},p^{\nu_3})}\cap
\mathcal{B}'(p^{\nu+m_1+m_2},p^{\nu+m_1+m_2})\big)}{p^{ m_1 +m_2}}
\\
&=\frac{\#\big(\mathsf{\Lambda}'  {(p^{\nu_1},p^{\nu_2},p^{\nu_3})}\cap
\mathcal{B}'(p^{\nu+\mu'' +\varepsilon },p^{\nu+\mu'' +\varepsilon 
})\big)}{p^{ \mu'' +\varepsilon }}.
\end{align*}
Nous r\'e\'ecrivons cette relation
\begin{align*}
\frac{\rho(p^{\nu_1},p^{\nu_2},p^{\nu_3},\ma{M})}{p^{2\nu_1+2\nu_2+2\nu_3+v_p(D''e)}} 
&=\frac{\#\big(\mathsf{\Lambda}' {(p^{\nu_1},p^{\nu_2},p^{\nu_3})}\cap
\mathcal{B}'(p^{\nu+\mu'' +\varepsilon},p^{\nu+\mu'' 
+\varepsilon})\big)}{p^{2(\nu+\mu''
+\varepsilon)}}
\\&= \frac{\#\big(\mathsf{\Lambda}' {(p^{\nu_1},p^{\nu_2},p^{\nu_3})}\cap
\mathcal{B}'(p^{\nu+\mu''+1 },p^{\nu+\mu''+1})\big)}{p^{2(\nu+\mu'' +1)}}.
\end{align*}
La derni\`ere \'egalit\'e est un passage subtil de la 
d\'emonstration. Pour $\varepsilon=1$, elle est claire
mais, pour
$\varepsilon=0$, il faut observer que  les conditions incluses dans 
l'ensemble 
$ \mathsf{\Lambda}' {(p^{\nu_1},p^{\nu_2},p^{\nu_3})}$  ne 
d\'ependent que de la valeur des coordonn\'ees modulo
$p^{\nu+\mu'' }$.
Nous sommes maintenant en mesure de sommer par rapport \`a $\cA\in 
\cV(\ma{D''})$  
et $e$. Comme les termes sont multiplicatifs, il suffit de sommer
$(-1)^{\varepsilon}\sigma_p(E)p^{-(v_p(D''e)+2\beta)}$ par rapport 
\`a $p^\beta$, $\cV(p^{\mu_1''},p^{\mu_2''},p^{\mu_3''})$ 
et
$p^{\varepsilon}$ avec $\beta\leq \max\{\mu_1' ,\mu_2' ,\lceil \mu_3' 
/2 \rceil\}=:B'$ et
$\varepsilon\leq
\max\{ 1,\mu''\}$.
Cette somme vaut $W=\prod_pw_p$ avec
\begin{align*}w_p:=
\Big(1-\frac{1}{p}\Big)^3\sum_{0\leq \beta\leq B'}
\sum_{\mnu\in\Z_{\geq 0}^3} \frac{\rho'
(p^{\nu_1},p^{\nu_2},p^{\nu_3};p^{\nu+\mu'' +1})}{p^{2(\nu+\mu'' +1+\beta) }}
\end{align*}
o\`u
$$ 
\rho'(p^{\nu_1},p^{\nu_2},p^{\nu_3};p^{k})=\#\{ \ma{w'}\in 
[0,p^{k})\,:\,
p^{N_i }\mid p^{\beta}L_i(\ma{w'}),~ p^{N_3}\mid p^{2\beta}Q(\ma{w'}),
~p\nmid \ma{w'}\}
$$
avec la notation \eqref{defNi}. La fonction
$\rho'(p^{\nu_1},p^{\nu_2},p^{\nu_3};p^{k})/p^{2k}$ ne d\'epend pas 
de $k$ pourvu que $k\geq \nu+1$.
Puisque $\mu''\leq \mu'+B'-\beta$,
  nous pouvons \'ecrire
\begin{align*}w_p:=
\Big(1-\frac{1}{p}\Big)^3
\sum_{\mnu\in\Z_{\geq 0}^3} \sum_{0\leq \beta\leq B'}\frac{\rho'
(p^{\nu_1},p^{\nu_2},p^{\nu_3};p^{\nu+\mu' 
+1+B'-\beta})}{p^{2(\nu+\mu'  +1+B') }}.
\end{align*}
En faisant un changement de variables $ p^{\beta}
\ma{w'}=\ma{w}$, la valeur de $\beta$ correspond \`a la valuation 
$p$-adique de 
$\big(p^\beta w_1'
,p^\beta w_2',\psi(p^{\mu_1'},p^{\mu_2'},p^{\mu_3'})\big).$
La somme int\'erieure en $\beta$ vaut donc
\begin{align*}&\frac{1}{p^{2(\nu+\mu' +1+B') }}
\#\{ \ma{w }\in [0,p^{\nu+\mu'  +1+B'})\,:\, p^{N_i }\mid
  L_i(\ma{w }),\, p^{N_3}\mid  Q(
\ma{w})\}
\\&\quad=
\frac{1}{p^{2(N_1+N_2+N_3) }}
\#\{ \ma{w }\in [0,p^{N_1+N_2+N_3})\,:\, p^{N_i }\mid
  L_i(\ma{w }),\, p^{N_3}\mid  Q(
\ma{w})\}\end{align*}
puisque les conditions de divisibilit\'e d\'efinissant cet ensemble 
ne d\'ependent que des valeurs des
coordonn\'ees de $\ma{w }$ modulo $p^{N_1+N_2+N_3}$. \'Etant donn\'e 
la notation \eqref{defNi}, nous
obtenons bien  $w_p=\sigma_p(\ma{d},\ma{D})$ la relation recherch\'ee.

\section{D\'emonstration du   Th\'eor\`eme \ref{tht}}\lab{sectiontht}

Dans toute cette partie, l'ensemble $V$ est toujours $[0,1]^3$ et 
nous n'indi\-quons plus ce choix.
Pour relier $T(X)$ au r\'esultat \'etabli au Th\'eor\`eme 
\ref{maind}, on commence par montrer une
formule d'\'eclatement qui pallie la non-compl\`ete 
multiplicativit\'e de la fonction $\tau$.
\begin{lem}\lab{lemsplit}
Lorsque $\ma{n}=(n_1,n_2,n_3)\in\N^3$, on a
$$\tau(n_1n_2n_3)=\sum_{\colt{\ma{d}\in\N^3}{d_id_j\mid
n_k}}\frac{\mu(d_1d_2)\mu(d_3)}{2^{\omega((d_1,n_1))+\omega((d_2,n_2))}}
\tau\Big(\frac{n_1}{d_2d_3}\Big)\tau\Big(\frac{n_2}{d_1d_3}\Big)\tau\Big(\frac{n_3}{d_1d_2}\Big),$$
o\`u $\{i,j,k\}=\{1,2,3\}$.
\end{lem}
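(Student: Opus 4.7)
La strat\'egie consiste \`a it\'erer deux fois l'identit\'e classique de scission \`a deux variables
\begin{equation*}
\tau(AB) = \sum_{d \mid (A,B)} \mu(d) \tau(A/d) \tau(B/d),
\end{equation*}
d\'emontr\'ee \`a chaque puissance d'un nombre premier par inversion de M\"obius. Appliqu\'ee avec $A = n_3$ et $B = n_1 n_2$, elle fournit
\begin{equation*}
\tau(n_1 n_2 n_3) = \sum_{d \mid (n_3, n_1 n_2)} \mu(d) \tau(n_3/d) \tau(n_1 n_2/d).
\end{equation*}

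L'\'etape combinatoire cruciale est de reparam\'etrer cette somme : tout $d \mid (n_3, n_1 n_2)$ sans facteur carr\'e admet exactement $2^{k(d)}$ d\'ecompositions $d = d_1 d_2$ avec $d_1 \mid n_1$, $d_2 \mid n_2$ et $(d_1,d_2) = 1$, o\`u $k(d)$ est le nombre de facteurs premiers de $d$ qui divisent \`a la fois $n_1$ et $n_2$. Pour toute telle d\'ecomposition fix\'ee, en utilisant $(d_1,d_2)=1$ et $d_i \mid n_i$, on v\'erifie que $k(d) = \omega((d_1, n_2)) + \omega((d_2, n_1))$, les deux ensembles d'indicateurs \'etant disjoints par coprimalit\'e. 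Le passage de $\sum_d$ \`a $\sum_{(d_1,d_2)}$ introduit donc le poids compensateur $2^{-\omega((d_1,n_2))-\omega((d_2,n_1))}$, et le support devient $d_1 \mid (n_1,n_3)$, $d_2 \mid (n_2,n_3)$, condition qui combin\'ee \`a $(d_1,d_2)=1$ \'equivaut \`a $d_1 d_2 \mid n_3$.

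J'appliquerais ensuite l'identit\'e de scission une seconde fois \`a $\tau((n_1/d_1)(n_2/d_2))$, faisant intervenir un indice suppl\'ementaire $d_3 \mid (n_1/d_1, n_2/d_2)$. Apr\`es substitution, les conditions combin\'ees deviennent $d_1 d_2 \mid n_3$, $d_1 d_3 \mid n_1$, $d_2 d_3 \mid n_2$, les trois facteurs prennent la forme $\tau(n_3/(d_1 d_2))$, $\tau(n_1/(d_1 d_3))$, $\tau(n_2/(d_2 d_3))$, et le poids est $\mu(d_1 d_2)\mu(d_3)\cdot 2^{-\omega((d_1,n_2))-\omega((d_2,n_1))}$. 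Un renommage des variables muettes $d_1 \leftrightarrow d_2$ met alors l'identit\'e sous la forme exacte du Lemme, avec le poids sym\'etrique $2^{-\omega((d_1,n_1))-\omega((d_2,n_2))}$. L'obstacle principal est pr\'ecis\'ement l'analyse du surcomptage lors de la reparam\'etrisation : il faut reconna\^\i tre que $k(d)$, quantit\'e ne d\'ependant que de $d$, se d\'ecompose localement \`a chaque premier comme somme de deux indicateurs qui deviennent disjoints sous la condition $(d_1,d_2)=1$, ce qui produit la forme additive, puis, apr\`es renommage, la forme sym\'etrique du poids annonc\'ee dans le Lemme.
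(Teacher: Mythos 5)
Your proof is correct and follows essentially the same route as the paper's: two applications of the two-variable identity $\tau(nm)=\sum_{d\mid (n,m)}\mu(d)\tau(n/d)\tau(m/d)$, together with the same combinatorial count of the $2^{k(d)}$ decompositions $d=d_1d_2$ expressed via $\omega$ of the relevant gcds. The only difference is cosmetic: you use the uncrossed labelling ($d_1\mid n_1$, $d_2\mid n_2$) and swap $d_1\leftrightarrow d_2$ at the end, whereas the paper adopts the crossed convention from the outset.
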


\begin{proof}
Nous partons de la formule
\begin{equation}\label{splite1}
  \tau(nm)=\sum_{ d\mid (n,m) } \mu(d)
\tau\Big(\frac{n }{d}\Big)\tau\Big(\frac{m}{d}\Big).
\end{equation}
Celle-ci se montre en remarquant que les deux membres de 
l'\'egalit\'e sont des fonctions multiplicatives
qui sont identiques sur les couples de puissances de nombre premier.
Ainsi
\begin{equation*}
  \tau(n_1n_2n_3)=\sum_{ d\mid (n_1n_2,n_3) } \mu(d)
\tau\Big(\frac{n_1n_2 }{d}\Big)\tau\Big(\frac{n_3}{d}\Big).
\end{equation*}
Le nombre de mani\`ere d'\'ecrire $d=d_2d_1$ avec $d_2\mid n_1$ et 
$d_1\mid n_2$ lorsque $d$ est sans
facteur carr\'e est \'egal \`a 
$2^{\omega((d,n_1,n_2))}=2^{\omega((d_1,n_1))+\omega((d_2,n_2))}$.
Il vient
\begin{equation*}
  \tau(n_1n_2n_3)=\sum_{ d_1d_2 \mid n_3, d_1\mid n_1, d_2\mid n_2 
}\frac{\mu(d_1d_2)
}{2^{\omega((d_1,n_1))+\omega((d_2,n_2))}}
\tau\Big(\frac{n_1  }{ d_2 }\frac{ n_2 }{ 
d_1}\Big)\tau\Big(\frac{n_3}{ d_1d_2}\Big),
\end{equation*}
ce qui fournit la formule du lemme apr\`es application de 
\eqref{splite1}  avec $n=n_1/d_2$ et
$m=n_2/d_1$ pour calculer
$\tau( {n_1  }/{ d_2 }\times{ n_2 }/{ d_1}) $.
\end{proof}
\medskip

Le lemme suivant permet de minorer $\delta(\ma{D})$ la quantit\'e 
d\'efinie en \eqref{defdelta}.

\begin{lem}\lab{lemmindelta}
Lorsque $(D_1,D_3)$ et $(D_2,D_3)$ sont sans facteur carr\'e, le ppcm
$$ 
\Big[\frac{(D_1,D_3)}{(D_1,D_3 ,\Delta_{13} ) 
},\frac{(D_2,D_3)}{(D_2,D_3 ,\Delta_{23} ) },\frac{(D_1,D_2)}{(
D_1,D_2,\Delta_{12}) }
\Big]
$$ 
est un diviseur de $ \delta(\ma{D}).$
\end{lem}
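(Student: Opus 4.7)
Le plan est de procéder place par place. Notons les trois quotients
$$
M_1:=\frac{(D_1,D_3)}{(D_1,D_3,\Delta_{13})},\quad
M_2:=\frac{(D_2,D_3)}{(D_2,D_3,\Delta_{23})},\quad
M_3:=\frac{(D_1,D_2)}{(D_1,D_2,\Delta_{12})}.
$$
Par définition de $\delta(\ma{D})$, il est équivalent de montrer que chaque $M_i$ divise $\pgcd(x_1,x_2)$ pour tout $\x\in\mathsf{\Lambda}(\ma{D})$: le ppcm $[M_1,M_2,M_3]$ divisera alors aussi ce $\pgcd$, et par maximalité de $\delta(\ma{D})$ on en déduira $[M_1,M_2,M_3]\mid\delta(\ma{D})$. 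On vérifiera donc $M_i\mid\pgcd(x_1,x_2)$ pour chaque $i$.

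Le cas $M_3$ est immédiat via les identités linéaires
$$
\Delta_{12}\,x_1=b_2L_1(\x)-b_1L_2(\x),\qquad \Delta_{12}\,x_2=a_1L_2(\x)-a_2L_1(\x),
$$
conséquences directes de \eqref{defforme} et \eqref{defdisc}. Pour tout nombre premier $p$, ces identités entraînent $v_p(x_i)\geq\min\{v_p(L_1(\x)),v_p(L_2(\x))\}-v_p(\Delta_{12})$, puis
$v_p(\pgcd(x_1,x_2))\geq\max\{0,\min\{v_p(D_1),v_p(D_2)\}-v_p(\Delta_{12})\}=v_p(M_3)$,
d'où $M_3\mid\pgcd(x_1,x_2)$ sans aucune hypothèse de non-présence de facteur carré.

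Pour $M_1$ et $M_2$, on utilise l'hypothèse. Comme $(D_1,D_3)$ est sans facteur carré, $M_1$ l'est aussi, et il suffit de montrer que tout premier $p\mid M_1$ divise $\pgcd(x_1,x_2)$. Un tel premier $p$ vérifie $p\mid(D_1,D_3)$ et $p\nmid\Delta_{13}$. Pour $\x\in\mathsf{\Lambda}(\ma{D})$, on a alors $p\mid L_1(\x)$ et $p\mid Q(\x)$, c'est-à-dire que $(x_1,x_2)\bmod p$ est un zéro commun des formes homogènes $L_1$ et $Q$ dans $\F_p^2$. Or $\Delta_{13}=\Res(L_1,Q)$ (cf.~\eqref{defdisc}): la non-annulation de ce résultant modulo $p$ garantit que $L_1$ et $Q$ n'ont aucun zéro commun non-trivial dans $\F_p^2$. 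On conclut $(x_1,x_2)\equiv(0,0)\pmod p$, soit $p\mid\pgcd(x_1,x_2)$. L'argument pour $M_2$ est rigoureusement le même en échangeant $L_1$ et $L_2$.

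La seule étape qui mérite un mot est la propriété standard du résultant: $p\nmid\Res(L_1,Q)$ implique l'absence de zéro commun non-trivial dans $\F_p^2$. Elle découle de $\Res(L_i,Q)=Q(-b_i,a_i)$ (le zéro projectif $[-b_i:a_i]$ de $L_i$ ne doit pas être zéro de $Q$), et elle est déjà implicitement exploitée au point (4) du Lemme~\ref{rho*premier}. Il n'y a donc pas d'obstacle technique sérieux; la combinaison de l'identité linéaire pour $M_3$ et de l'argument de résultant pour $M_1,M_2$ conclut la preuve.
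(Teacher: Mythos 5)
Votre démonstration est correcte et suit essentiellement la même voie que celle du papier : les identités $\Delta_{12}x_j=\pm(b_{j'}L_1-b_{j''}L_2)$ pour le facteur lié à $\Delta_{12}$, et la relation de résultant $\Res(L_i,Q)=Q(-b_i,a_i)$ (équivalente à l'implication $d\mid L_i(\x),\,d\mid Q(\x)\Rightarrow d\mid \Delta_{i3}(x_1,x_2)$ pour $d$ sans facteur carré) pour les deux autres. Votre rédaction place par place est simplement plus explicite, et la remarque que le facteur $(D_1,D_2)/(D_1,D_2,\Delta_{12})$ ne requiert aucune hypothèse de non-présence de facteur carré est exacte.
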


\begin{proof} 
Nous rappelons la notation    
\eqref{defdisc}. Nous avons les implications
\begin{align*} 
&d\mid L_1(\x),\,d\mid L_2(\x)\Rightarrow d\mid \Delta_{12}(x_1,x_2)\Rightarrow
d/(d,\Delta_{12})\mid (x_1,x_2),
\\ &d\mid L_i(\x),\,d\mid Q(\x)\Rightarrow d\mid \Delta_{i3}(x_1,x_2)
\Rightarrow d/(d,\Delta_{i3})\mid
(x_1,x_2)
\end{align*}
si $|\mu(d)|=1$. Le r\'esultat d\'ecoule ais\'ement de ces
implications. 
\end{proof}

Appliquons maintenant le Lemme \ref{lemsplit} aux termes somm\'es dans $T(X)$.
Nous avons  
$$T(X)=\sum_{\ma{e}\in\N^3}
\mu(e_1e_2)\mu(e_3)
\!\!\!\!\!\!\!\!\!\!\!\!\!\sum_{\colt{\x\in  X\mcal{R}}{ 
\x\in\mathsf{\Lambda}(e_2e_3,e_1e_3,e_1e_2)
}}
\!\!\!\!\!\!\!\!\!\!\!\!\! \frac{1}{2^{\omega(k_1)+\omega(k_2)}}
\tau\Big(\frac{L_1(\x)}{e_2e_3}\Big)\tau\Big(\frac{L_2(\x)}{e_1e_3}\Big)\tau\Big(\frac{Q(\x)}{e_1e_2}\Big)$$
o\`u $k_1=(e_1,L_1(\x))$ et $k_2=(e_2,L_2(\x))$.
Il vient alors
\begin{align*}
T(X)=\sum_{\ma{e}\in\N^3} 
\mu(e_1e_2)\mu(e_3)\sum_{\colt{\ma{k}=(k_1,k_2,k_1',k_2')\in\N^4}{k_ik_i'\mid
e_i}}
\frac{\mu(k_1')\mu(k_2')}{2^{\omega(k_1)+\omega(k_2)}} T_{\ma{e},\ma{k} }(X),
\end{align*}
avec
\begin{equation*}
  T_{\ma{e},\ma{k} }(X)=\sum_{ \x\in\mathsf{\Lambda}({\ma{e},\ma{k} })\cap
X\mcal{R}}
\tau\Big(\frac{L_1(\x)}{e_2e_3}\Big)
\tau\Big(\frac{L_2(\x)}{e_1e_3}\Big)\tau\Big(\frac{Q(\x)}{e_1e_2}\Big),
\end{equation*}
o\`u 
$\mathsf{\Lambda}({\ma{e},\ma{k}}):=\mathsf{\Lambda}([e_2e_3,k_1k_1'],[e_1e_3,k_2k_2'],e_1e_2).$ 
Nous avons
sous les conditions
$k_ik_i'\mid e_i$ et $|\mu(e_1e_2)|=|\mu(e_3)|=1$, 
la relation
$\mathsf{\Lambda}({\ma{e},\ma{k} 
})=\mathsf{\Lambda}{([e_2e_3,k],[e_1e_3,k],e_1e_2) }$, 
avec $k=k_1k_1'k_2k_2'$. Ainsi la somme $T_{\ma{e},\ma{k} }(X)$ ne 
d\'epend que de $k$ et, \`a $k\mid
e_1e_2$ fix\'e, nous avons
$$\sum_{\colt{\ma{k}=(k_1,k_2,k_1',k_2')\in\N^4}{k_ik_i'=
(k,e_i)}}
\frac{\mu(k_1')\mu(k_2')}{2^{\omega(k_1)+\omega(k_2)}}
=\frac{\mu((k,e_1)) 
\mu((k,e_2))}{2^{\omega((k,e_1))+\omega((k,e_2))}}=\frac{\mu(k)}{2^{\omega(k)}}, 
$$
puisque le produit $e_1e_2$ est sans facteur carr\'e.

Nous avons donc
\begin{equation}\label{T(X)=sume}
T(X)=\sum_{\ma{e}\in\N^3} \mu(e_1e_2)\mu(e_3)\sum_{ k\mid e_1e_2}
\frac{\mu(k)}{2^{\omega(k)}} T_{\ma{e},k }(X),
\end{equation}
avec
$T_{\ma{e},k }(X)=S(X,\ma{d},\ma{D})$ 
et 
\begin{equation}\label{defdDparek}
\ma{d}=(e_2e_3,e_1e_3,e_1e_2),\quad
\ma{D}=([e_2e_3,k],[e_1e_3,k],e_1e_2).
\end{equation} 
Afin d'appliquer 
le Th\'eor\`eme \ref{maind}, nous
\'etudions chacune des quantit\'es qui interviennent dans l'\'enonc\'e.
Puisque $$e_2\mid (D_1,D_3),\qquad e_1\mid (D_2,D_3),\qquad 
[e_3,k]\mid (D_1,D_2) ,$$ le Lemme
\ref{lemmindelta} montre que
$$  \Big[\frac{e_1 }{(e_1 ,\Delta_{23} ) },\frac{ e_2}{ (e_2,
\Delta_{13})},\frac{[e_3,k ]}{([e_3,k ],\Delta_{12}) } \Big] \mid 
\delta(\ma{D}). 
$$   
Puisque  $e_1$ et $e_2$ sont premiers entre eux, nous obtenons
$$
  \frac{[ e_1e_2 , e_3,k ] }{([ e_1e_2 , e_3,k ],\Delta_{13} 
\Delta_{23}\Delta_{12}) }
\mid \Big[\frac{e_1e_2}{(e_1 ,\Delta_{23} )(e_2, 
\Delta_{13})},\frac{[e_3,k ]}{([e_3,k
],\Delta_{12}) } \Big]
\mid \delta(\ma{D}).$$
Enfin  la relation
$k\mid e_1e_2$
fournit
\begin{equation}\label{divdelta}
\frac{[ e_1e_2 , e_3  ] }{([ e_1e_2 , e_3  ],\Delta_{13} 
\Delta_{23}\Delta_{12}) }
\mid \delta(\ma{D}).
\end{equation}
Dor\'enavant, les formes $L_1$, $L_2$ et $Q$ sont fix\'ees et les 
constantes implicites dans les $O$
d\'ependent des coefficients de ces formes
et de supremum de la norme des \'el\'ements de $\mcal{R}$.  Nous pouvons donc  
supposer que $r'\ll r_\infty\ll 1$
d'apr\`es \eqref{prelim} et $a'(\ma{D}, \mDelta)\ll 1$. 

Pour montrer une majoration de $T_{\ma{e},k}(X)$, on utilise la 
valeur de $\delta(\ma{D})=\delta$
et on somme les $\x$ en fonction de la valeur de $(x_1,x_2)=\delta f$ 
avec $f\geq 1$. Il vient
\begin{align*}T_{\ma{e},k }(X)&\leq
\sum_{f\in \N}\sum_{\colt{\y\in \Z^2\cap (X/\delta f)\mcal{R}}{(y_1,y_2)=1}}
\tau(\delta fL_1(\y))\tau(\delta fL_2(\y))\tau(\delta ^2f^2Q(\y))
  \\&\leq\sum_{f\in \N}\tau(\delta f)^4
\sum_{\colt{\y\in \Z^2\cap (X/\delta 
f)\mcal{R}}{(y_1,y_2)=1}}\tau(L_1(\y))\tau( L_2(\y))\tau( Q(\y))
\\&\leq\sum_{f\in \N}\tau(\delta  )^4\tau(  f)^4
\sum_{\colt{\y\in \Z^2\cap (X/\delta 
f)\mcal{R}}{(y_1,y_2)=1}}\tau''(L_1(\y) L_2(\y) Q(\y))
,
\end{align*}
o\`u la fonction $\tau''$ a \'et\'e introduite en \eqref{deftau''}.
D'apr\`es le Corollaire 1 de \cite{nair}, la somme int\'erieure en $\y$ est
$\ll ( X/\delta f)^2(\log X)^3 $
ce qui fournit  d'apr\`es \eqref{divdelta}
\begin{equation*} 
T_{\ma{e},k }(X)
\ll  X^2(\log X)^3\frac{\tau(\delta  )^4}{\delta^2}
\ll  X^2(\log X)^3\frac{\tau([
e_1e_2 , e_3  ]  )^4}{[ e_1e_2 , e_3  ]^2}.
\end{equation*}
 La contribution des $\ma{e}$ dans la somme \eqref{T(X)=sume} 
d\'efinissant $T(X)$ tels que
$e=e_1e_2e_3\geq  (\log X)^2$ est donc major\'e pour tout 
$\varepsilon>0$ fix\'e par
\begin{align*}&\ll  X^2(\log X)^3\sum_{e\geq ( \log
X )^2}\frac{\tau(e)^{8}}{e^2}\sum_{m^2\mid e} m^2\cr&\ll
X^2(\log X)^3\sum_{m\in\N}\frac{\tau(m^2)^{8}}{m^2}\sum_{f\geq \max\{ 1,
(\log x)^2/m^2\}}
\frac{\tau(f)^{8}}{f^2}\cr&\ll
X^2(\log X)^{3+\varepsilon/2}\sum_{m\in\N}\frac{\tau(m^2)^{8}}{m^2+(\log
x)^2} 
\ll  X^2(\log
X)^{2+\varepsilon},
\end{align*}
o\`u $m=e/[e_1e_2,e_3]=(e_1e_2,e_3)$.
Lorsque $e<(\log X)^2$, le  Th\'eor\`eme
\ref{maind} fournit
\begin{align*} 
T_{\ma{e},k }(X) =& 2 \vol(\mcal{R})
  X^2(\log r'X)^3 \prod_{p}\sigma_p(\ma{d},\ma{D})
   +
O\Big(   \frac{e^{\varepsilon} X^2 (\log X)^{2}
\log\log X}{[ e_1e_2 , e_3  ]} \Big),
\end{align*}
o\`u, dans la somme $\sigma_p(\ma{d},\ma{D})$, le couple 
$(\ma{d},\ma{D}) $ est d\'efini par
\eqref{defdDparek}.
Comme $D_3'$ est sans facteur carr\'e, nous pouvons utiliser 
\eqref{majW} coupl\'e avec
\eqref{psi0sscarre}. Nous avons
\begin{align*} 
\prod_{p}\sigma_p(\ma{d},\ma{D})\ll D^{ \varepsilon}
\frac{\psi_0(D_1',D_2',D_3')}{D'}
& \ll \frac{e^{\varepsilon}(e_1e_2,e_3)k }{
[e_2e_3,k] [e_1e_3,k] e_1e_2}
\ll \frac{e^{\varepsilon}(e_1e_2,e_3)(e_3,k) }{
   (e_1e_2e_3)^2} .\end{align*}
Nous en d\'eduisons l'estimation
$$T(X)=2 \vol(\mcal{R})C
  X^2(\log X)^3+ O\big(         X^2 (\log X)^{2+\varepsilon}\big),$$
avec
$$C =
\sum_{\ma{e}\in\N^3} \mu(e_1e_2)\mu(e_3)\sum_{k\mid
e_1e_2}\frac{\mu(k)}{2^{\omega(k)}}\prod_{p}\sigma_p(\ma{d},\ma{D}). $$

\smallskip

Nous nous consacrons maintenant au calcul de $C$.
Pour des raisons de multiplicativit\'e, nous avons
$C=\prod_p\Big(1-\frac{1}{p}\Big)^3 C_p$ 
avec
\begin{equation*} 
C_p:=\!\!\!\!\!\!\sum_{\colt{\mve\in \{0,1\}^3}{\min
\{\varepsilon_1,\varepsilon_2\}=0}}\!\!\!\!\!\!(-1)^{\varepsilon_1+\varepsilon_2+\varepsilon_3}
\sum_{\colt{\mka}{0\leq\kappa_i\leq \varepsilon_i}}(-1/2)^{\kappa_1+\kappa_2}
\sum_{\mnu\in\Z_{\geq 0}^3} \overline\rho
( {N_1}, {N_2}, {N_3}),
\end{equation*}
avec 
\begin{align*} 
N_1 :=\max\{ \kappa_1,\varepsilon_2+\varepsilon_3+\nu_1  \},\quad
N_2 :=\max\{ \kappa_2,\varepsilon_1+\varepsilon_3+\nu_2  \} ,\quad
N_3 := \varepsilon_1+\varepsilon_2+\nu_3  
\end{align*}
et
$$ \overline\rho
( {n_1}, {n_2}, {n_3})=\overline\rho_p
( {n_1}, {n_2}, {n_3}):=\frac{ \rho
(p^{n_1},p^{n_2},p^{n_3})
}{p^{2n_1+2n_2+2n_3}}.$$
Pour simplifier les calculs nous fixons $p$ et n'indiquons plus la 
d\'ependance en $p$ de
$\overline\rho_p$. Nous intervertissons les sommations. Il vient
$C_p=\sum_{\mnu\in\Z_{\geq 0}^3}c_p(\mnu)$,  
avec
\begin{align*}
c_p(\mnu)
=&\overline\rho(\nu_1, \nu_2, \nu_3)-\overline\rho(\nu_1, \nu_2+1, \nu_3+1)
+\dm\overline\rho(\max\{\nu_1,1\}, \nu_2+1, \nu_3+1)\\
&-\overline\rho(\nu_1+1, \nu_2 , \nu_3+1)
+\dm\overline\rho(  \nu_1+1,\max\{\nu_2,1\}, \nu_3+1)
\\&-\overline\rho(\nu_1+1, \nu_2+1, \nu_3 )
+\dm\overline\rho(\nu_1+1, \nu_2+2, 
\nu_3+1)\\&+\dm\overline\rho(\nu_1+2, \nu_2+1, \nu_3+1).
\end{align*}
Soit $\delta_0$ la fonction caract\'eristique sur $\Z_{\geq 0}$ du point $0$.
Nous avons  
\begin{align*}\overline\rho(\max\{\nu_1,1\}, \nu_2+1, \nu_3+1) =&
\overline\rho(\nu_1, \nu_2+1, \nu_3+1)\\&+\delta_{0}(\nu_1)
\big(\rho(1, \nu_2+1, \nu_3+1)-\overline\rho(0, \nu_2+1, 
\nu_3+1)\big)\end{align*}
et
\begin{align*}\overline\rho(\nu_1+1, \max\{\nu_2,1\}, \nu_3+1) =&
\overline\rho(\nu_1+1, \nu_2, \nu_3+1)\\&+\delta_{0}(\nu_2)
\big(\rho(\nu_1+1,  1, \nu_3+1)-\overline\rho(\nu_1+1, 0, 
\nu_3+1)\big).\end{align*}
Nous avons
\begin{align*}
c_p(\mnu)
=&\overline\rho(\nu_1, \nu_2, \nu_3)-\dm\overline\rho(\nu_1, \nu_2+1, \nu_3+1)
\\&+\dm\delta_{0}(\nu_1)
\big(\rho(1, \nu_2+1, \nu_3+1)-\overline\rho(0, \nu_2+1, \nu_3+1)\big)
  \\
&-\dm\overline\rho(\nu_1+1, \nu_2 , 
\nu_3+1)
  -\overline\rho(\nu_1+1, \nu_2+1, \nu_3 )\\&
  +\dm\delta_{0}(\nu_2)
\big(\rho(\nu_1+1,  1, \nu_3+1)-\overline\rho(\nu_1+1, 0, 
\nu_3+1)\big)\\& +\dm\overline\rho(\nu_1+1, \nu_2+2,
\nu_3+1) +\dm\overline\rho(\nu_1+2, \nu_2+1, \nu_3+1).
\end{align*}
Nous observons des simplifications entre le premier et le cinqui\`eme 
terme  avec une contribution
\'egale \`a
\begin{align*}\sum_{\mnu\in\Z_{\geq 0}^3}\overline\rho(\nu_1, \nu_2, \nu_3)
& -\overline\rho(\nu_1+1, \nu_2+1, \nu_3 )
\\&=\sum_{(\nu ,\nu_3)\in\Z_{\geq 0}^2} \big(\overline\rho(0, \nu, \nu_3
) +\overline\rho(\nu  , 0,
\nu_3  )-\overline\rho(0  , 0,
\nu_3  )\big).\end{align*}
Nous observons des simplifications entre le  
deuxi\`eme et le
huiti\`eme terme avec une contribution
\'egale \`a
\begin{align*}
\sum_{\mnu\in\Z_{\geq 0}^3}&-\dm\overline\rho(\nu_1, \nu_2+1, \nu_3+1)
  + \dm\overline\rho(\nu_1+2, \nu_2+1, \nu_3+1)
\\&=-\dm\sum_{(\nu_2 ,\nu_3)\in\Z_{\geq 0}^2} \big(\overline\rho(0,
  \nu_2+1, \nu_3+1) +\overline\rho(1, \nu_2+1, \nu_3+1) \big)\end{align*}
de m\^eme entre le quatri\`eme et le septi\`eme.
Nous obtenons alors
\begin{align*}C_p =& \sum_{(\nu ,\nu_3)\in\Z_{\geq 0}^2} 
\big(\overline\rho(0, \nu, \nu_3
)-\overline\rho(0, \nu+1,
\nu_3+1 )\\&\qquad\qquad\qquad+\overline\rho(\nu  , 0,
\nu_3  )-\overline\rho(\nu+1,0,\nu_3+1)-\overline\rho(0, 0,\nu_3)\big)
\\=&\overline\rho(0,0, 0) +\sum_{ \nu  \in\N } \big(\overline\rho(0, \nu, 0)
+\overline\rho(\nu,0,0)+\overline\rho(0,0,\nu) \big).
\end{align*}
Cela cl\^ot la d\'emonstration.

\section{D\'emonstration   des corollaires}\lab{sectionCor}

\subsection{D\'emonstration du Corollaire \ref{corS*}.}

   Une inversion de Mob\"\i us fournit
\begin{align*}
   S^*(X,\ma{d},\ma{D};V) =\sum_{k\in\N}\mu(k)
S(X/k,\ma{d},\ma{D};kL_1,kL_2,k^2Q,\mcal{R};V ).
\end{align*}
Nous appliquons le Th\'eor\`eme \ref{maind}. Nous commen\c cons par 
estimer la contribution du terme
d'erreur.
La valeur $r'$ associ\'ee  satisfait
\begin{equation}\label{valrk}
r'(kL_1,kL_2,k^2
Q,k^2\mcal{R})=kr'(L_1,L_2,Q,\mcal{R}) \end{equation}
et donc l'ensemble $V$ est le m\^eme pour tous les $k$. 
Pour chaque valeur de $k$, la valeur de $a'$ associ\'ee est $\leq
a'(\ma{D},\mDelta)$. Le terme d'erreur est donc
$$\ll
(DL_\infty)^{\varepsilon} a'(\ma{D},\mDelta) (kr_\infty r'  +r_\infty^2)  \frac{ 
X^2  }{k^{2-\varepsilon}}(\log
X)^{2}
\log\log X.$$
La somme  de ce terme pour $k\leq K:= \log X $ est englob\'ee dans le 
terme d'erreur.
D'apr\`es le Lemme \ref{majSXdD}, la contribution des 
$k\in\,(K,r_\infty X]$ est major\'ee par
\begin{align*}&\ll (DL_\infty)^{\varepsilon}
a'(\ma{D},\mDelta)\Big(
        (r_\infty X)^2 (\log
X)^{3}\sum_{k>K}k^{-2+\varepsilon}+  (r_\infty 
X)^{1+\varepsilon}(\log X) \Big)\\&
\ll
(DL_\infty)^{\varepsilon}
a'(\ma{D},\mDelta)
        (r_\infty X)^2 (\log
X)^{ 2+\varepsilon}
\end{align*}
o\`u nous avons utilis\'e $r_\infty X\geq 1$.

Il reste \`a \'evaluer la contribution du terme principal qui, 
compte-tenu de~\eqref{valrk}, vaut
$$ 2 \vol(\mcal{R}) \vol(V)
  X^2(\log r'X)^3\sum_{k\leq K}\mu(k)W_k$$
avec
$$ 
W_k:=\frac{1}{k^2}\prod_{p}\Big(1-\frac{1}{p}\Big)^3\sum_{\mnu\in\Z_{\geq
0}^3}  \overline{\rho}_\kappa (p^{N_1},p^{N_2},p^{N_3}) .$$
Ici, nous avons $\kappa=v_p(k)$ et
$$
\overline{\rho}_\kappa(p^{N_1},p^{N_2},p^{N_3})=\frac{\#\{ \x\in 
[0,p^{N_1+N_2+N_3})\,:\, p^{N_i}\mid
p^\kappa L_i,
\,p^{N_3}\mid p^{2\kappa}Q  \}}{p^{2N_1+2N_2+2N_3}},
$$
avec les notations \eqref{defNi}.
Nous pouvons consid\'erer la somme enti\`ere (ie  
sans la 
contrainte $k\leq K$) puisque,
d'apr\`es  \eqref{majW}, 
$W_k\ll (DL_\infty)^\varepsilon  k^{-2+\varepsilon}$.  
Nous faisons les m\^emes manipulations qu'\`a la fin de la 
d\'emonstration du  Th\'eor\`eme~\ref{maind}
sans indiquer les d\'etails. Nous obtenons
$$\sum_{k\in\N}\mu(k)W_k=\prod_pw^*_p$$ avec
\begin{align*}w^*_p&:=\Big(1-\frac{1}{p}\Big)^3\sum_{\mnu\in\Z_{\geq 
0}^3}\sum_{\kappa\in\{0,1\}}
  (-1)^\kappa\frac{\overline{\rho}_\kappa(p^{N_1},p^{N_2},p^{N_3})}{p^{2\kappa}}
=\sigma^*_p(\ma{d},\ma{D})
\end{align*}
avec $\sigma^*_p(\ma{d},\ma{D})$ d\'efini en \eqref{defsigma*d} et 
\eqref{defsigma*d0}.
Lorsque $v_p(D)=0$, nous avons utilis\'e le fait que
$$\sum_{\kappa\in\{0,1\}}
  (-1)^\kappa\frac{\overline{\rho}_\kappa(1,1,1)}{p^{2\kappa}}
=1-\frac{1}{p^{2}} .$$ Cela ach\`eve la preuve du Corollaire \ref{corS*}.

\subsection{D\'emonstration du Corollaire \ref{corT*}. }
Pour un entier   $n\in\N$, nous d\'esi\-gnons par $\mcal{P}(n)$ l'ensemble des
facteurs premiers de $n$. Nous utilisons la
notation $\mcal{S}=\mcal{P}(\Delta_{12}\Delta_{13}\Delta_{23})$.
Nous aurons besoin du lemme suivant.

\begin{lem}\lab{lemg} Soit $g$ une fonction multiplicative
satisfaisant \eqref{conditiong} pour un $\eta_0$ fix\'e.
Il existe $\eta_1\in(0,\eta_0)$ tel que  
$$ 
\sum_{\colt{\ma{m}, \ma{s}\in\N^3}{\mcal{P}(m_i s_i)\subseteq \mcal{S}}}
  \frac{|\mu(s_1)\mu(s_2)\mu(s_3)||g(m_1m_2m_3)|}{(
    m_1m_2m_3s_1s_2s_3)^{1/2-\eta_1 }}\ll 1.
$$  soit born\'ee.
\end{lem}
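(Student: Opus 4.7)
The plan is to reduce the six-fold sum to a finite Euler product over the prime set $\mcal{S}$, and then to bound each local factor via hypothesis~\eqref{conditiong}. Since $\mcal{S}=\mcal{P}(\Delta_{12}\Delta_{13}\Delta_{23})$ is a finite set of primes depending only on the forms $L_1,L_2,Q$, the M\"obius factors restrict $s_1,s_2,s_3$ to squarefree integers with prime factors in $\mcal{S}$, a set of cardinality at most $2^{|\mcal{S}|}$. For any $\eta_1\in(0,1/2)$ the sum over $\ma{s}$ thus contributes $O(1)$, and it remains to bound the sum over $\ma{m}$.

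By multiplicativity of $g$ and the support condition $\mcal{P}(m_i)\subseteq\mcal{S}$,
$$
\sum_{\colt{\ma{m}\in\N^3}{\mcal{P}(m_i)\subseteq\mcal{S}}}
\frac{|g(m_1m_2m_3)|}{(m_1m_2m_3)^{1/2-\eta_1}}
=\prod_{p\in\mcal{S}}E_p(\eta_1),
\qquad
E_p(\eta_1):=\sum_{\nu\geq 0}\binom{\nu+2}{2}\frac{|g(p^\nu)|}{p^{\nu(1/2-\eta_1)}},
$$
using that the number of triples $(\nu_1,\nu_2,\nu_3)\in\Z_{\geq 0}^3$ with $\nu_1+\nu_2+\nu_3=\nu$ equals $\binom{\nu+2}{2}$. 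I would then use $g=\tau*h$ to write $|g(p^\nu)|\leq \sum_{j=0}^\nu(\nu-j+1)|h(p^j)|$, swap the order of summation (setting $k=\nu-j$), and absorb the polynomial growth in $k$ into the geometric tail, obtaining
$$
E_p(\eta_1)\ll_{p,\eta_1}\sum_{j\geq 0}(j+1)^2|h(p^j)|p^{-j(1/2-\eta_1)}.
$$

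Finally I would choose $\eta_1\in(0,\eta_0)$; writing $p^{-j(1/2-\eta_1)}=p^{-j(\eta_0-\eta_1)}\cdot p^{-j(1/2-\eta_0)}$, the factor $(j+1)^2$ is absorbed into $p^{-j(\eta_0-\eta_1)}$, whence $E_p(\eta_1)\ll_p\sum_{j\geq 0}|h(p^j)|p^{-j(1/2-\eta_0)}$. Since $h=g*\mu*\mu$ is multiplicative, hypothesis~\eqref{conditiong} factors as $\prod_p\sum_{j\geq 0}|h(p^j)|p^{-j(1/2-\eta_0)}\ll 1$; in particular each $E_p(\eta_1)$ is finite, and the product over the finite set $\mcal{S}$ yields the claimed bound. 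The only mild technical point is to pick $\eta_1$ strictly smaller than both $\eta_0$ and $1/2$, which guarantees simultaneously the convergence of the inner geometric sum in $\nu$ and the absorption of the polynomial factors arising from the convolution $g=\tau*h$ and the multinomial $\binom{\nu+2}{2}$; since $|\mcal{S}|$ is bounded in terms of the forms, the resulting constant depends only on $L_1,L_2,Q$, $\eta_0$ and $\eta_1$.
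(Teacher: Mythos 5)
Your proof is correct and follows exactly the route the paper has in mind: the paper omits the argument as immediate, remarking only that the squarefree $s_i$ supported on the fixed finite set $\mcal{S}$ are bounded, and your reduction to a finite Euler product over $p\in\mcal{S}$ together with the local bound $\sum_{j\geq 0}|h(p^j)|p^{-j(1/2-\eta_0)}\leq\sum_{d}|h(d)|d^{-(1/2-\eta_0)}\ll 1$ from \eqref{conditiong} is the natural filling-in of that remark. The choice $\eta_1<\min\{\eta_0,1/2\}$ to absorb the polynomial factors $\binom{\nu+2}{2}$ and $(j+1)^2$ is exactly the point that makes the lemma hold for \emph{some} $\eta_1\in(0,\eta_0)$ rather than all of them.
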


Nous n'indiquons pas   la preuve qui est imm\'ediate. Notons
seulement que comme $\mcal{S}$ est fix\'e, les $s_i$ qui sont sans 
facteur carr\'e sont born\'es.

\smallskip

Nous sommes maintenant en mesure de d\'emontrer le Corollaire 
\ref{corT*}. Introduisons
  les entiers $m_i$
enti\`erement d\'efinis par
\begin{equation}\label{conditionS}\mcal{P}(L_i(\x)/m_i)\cap\mcal{S}=\mcal{P}(Q(\x)/m_3)\cap\mcal{S}=\emptyset,
\qquad \mcal{P}(m_i)\subseteq
\mcal{S} .
\end{equation}
Puisque $(x_1,x_2)=1$, les  $L_i(\x)/m_i$ et $Q(\x)/m_3$ sont 
premiers deux \`a deux. Nous \'ecrivons
\begin{align*} 
&
g(L_1(\x), L_2(\x), Q(\x);V) \\
&\quad=\sum_{\colt{k\mid
m_1m_2m_3}{k_i=(k,m_i)}}(1*h)(k)
\sum_{\colt{q_i\mid L_i(\x)/m_i,\quad q_3\mid Q(\x)/m_3}{
\big(\frac{\log k_1q_1}{\log r'X},\frac{\log k_2q_2}{\log 
r'X},\frac{\log k_3q_3}{2\log r'X}\big)\in
V}}\prod_{j=1}^3(1*h)(q_j)\\
\\&\quad=\sum_{\colt{k\mid
m_1m_2m_3}{k_i=(k,m_i)}}(1*h)(k)
\sum_{\tolt{\ma{d}\in\N^3}{(d_1,d_2,d_3)=1}{\mcal{P}(d_i)\cap 
\mcal{S}=\emptyset}}
h(d_1 d_2 d_3)\!\!\!\!\!\!\!\!
\sum_{\colt{\ell_id_i\mid L_i(\x)/m_i,\quad \ell_3d_3\mid Q(\x)/m_3}{
\big(\frac{\log k_1\ell_1d_i}{\log r'X},\frac{\log k_2\ell_2d_2}{\log 
r'X},\frac{\log k_3\ell_3d_3}{2\log
r'X}\big)\in V}} 1,
\end{align*}
o\`u dans cette formule les $m_j$   satisfont les conditions 
\eqref{conditionS}. 
Nous pouvons alors \'ecrire  
$$ 
\Big(\frac{\log k_1\ell_1d_1}{\log r'X},\frac{\log 
k_2\ell_2d_2}{\log r'X},\frac{\log
k_3\ell_3d_3}{2\log r'X}\Big)\in V
\Leftrightarrow 
\Big(\frac{\log  \ell_1 }{\log r'X},\frac{\log \ell_2 }{\log 
r'X},\frac{\log  \ell_3 }{2\log
r'X}\Big)\in V(\ma{k},\ma{d}),
$$ 
avec
$\vol(V(\ma{k},\ma{d}))=\vol(V)+O(\log (md)/\log X).$
Nous avons donc
\begin{align*} 
T_g^*(X;V)=&\sum_{\colt{\ma{m} \in\N^3}{ \mcal{P}(m_j)\subseteq
\mcal{S}}}\sum_{\colt{k\mid
m_1m_2m_3}{k_j=(k,m_j)}}(1*h)(k)\sum_{\tolt{\ma{d}\in\N^3}{(d_1,d_2,d_3)=1}{\mcal{P}(d_j)\cap
\mcal{S}=\emptyset}} h(d_1 d_2 d_3)
\\&\times
\sum_{\tolt{\x\in \mathsf{\Lambda}({\ma{d}})\cap X\mcal{R}}{(x_1,x_2)=1
}{\mcal{P}((L_1L_2Q)(\x)/m_1m_2m_3
)\cap\mcal{S}= \emptyset  }}
\!\!\!\!\!\!\!\!\!\!\!\!\!\!\!\!
\tau\Big(\frac{L_1(\x)}{m_1d_1},\frac{L_2(\x)}{m_2d_2},\frac{Q(\x) 
}{m_3d_3},V(\ma{k},\ma{d}) \Big)
\\
=&\sum_{\colt{\ma{m} \in\N^3}{ \mcal{P}(m_j)\subseteq
\mcal{S}}}\sum_{\colt{k\mid
m_1m_2m_3}{k_j=(k,m_j)}}(1*h)(k)\sum_{\tolt{\ma{d},\ma{s}\in\N^3}{(d_1,d_2,d_3)=1}{\mcal{P}(d_j)\cap
\mcal{S}=\emptyset,\mcal{P}( s_j)\subseteq \mcal{S}}} h(d_1 d_2 
d_3)\\
&\times
\mu(s_1)\mu(s_2)\mu(s_3)
S^*(X,\ma{d}',\ma{D}'; V(\ma{k},\ma{d})),
\end{align*}
avec
$$\ma{d}'=(m_1d_1,m_2d_2, m_3d_3),\qquad 
\ma{D}'=(s_1m_1d_1,s_2m_2d_2,s_3m_3d_3).$$

Nous choisisssons $M=(\log X)^{2/\eta_1}$ avec $\eta_1$ issu du Lemme 
\ref{lemg}.
Nous  appliquons le Corollaire \ref{corS*} lorsque $d_i,s_i,m_i\leq 
M$ alors que nous faisons appel au
Lemme \ref{majSXdD} sinon.
Rappelons que les $s_i$   sont born\'es donc ils peuvent \^etre pris $\leq M$.
La contribution du premier terme du majorant donn\'e par le Lemme 
\ref{majSXdD}  avec le choix
$\varepsilon<\eta_1/3$ des
$\ma{m},\ma{s},\ma{d}$ tels que
$\max_i\{d_i, m_i\}> M$ est donc
\begin{align*}
&\leq
\sum_{\tolt{\ma{m},\ma{s},\ma{d}\in\N^3}{(d_1,d_2,d_3)=1=|\mu(s_i)|}{\mcal{P}(d_i)\cap
\mcal{S}=\emptyset,\mcal{P}(m_i s_i)\subseteq \mcal{S}}}
\frac{(mds)^{\eta_1/2}}{\log X}|g(m) h(d)
S^*(X,\ma{d}',\ma{D}')|
\\&\ll X^2(\log X)^{2+\varepsilon}\sum_{d\in\N}\frac{|h(d)|}{d^{1-\eta_1}}
\!\!\!\!
\sum_{\tolt{\ma{m}, \ma{s}\in\N^3}{ |\mu(s_i)|=1}{ \mcal{P}(m_i s_i)\subseteq
\mcal{S}}}
\frac{ |g(m)|}{(ms)^{1/2-\eta_1 }}
  \ll X^2(\log X)^{2+\varepsilon}
\end{align*}
avec $m=m_1m_2m_3$, $s=s_1s_2s_3$ et $d=d_1 d_2 d_3$. Ici, nous avons
utilis\'e la majoration $\tau(d)^2\ll d^{\eta_1/6}.$

Nous utilisons la relation $md\ll X$ sous la forme $1\ll 
(X/md)^{1-3\varepsilon}$. La contribution du
second terme du majorant donn\'e par le Lemme
\ref{majSXdD}  des
$\ma{m},\ma{s},\ma{d}$ est
\begin{align*} 
\ll X^{1+\varepsilon} 
\hspace{-0.2cm}
\sum_{\tolt{\ma{m}, 
\ma{d}\in\N^3}{(d_1,d_2,d_3)=1 }{\mcal{P}(d_i)\cap
\mcal{S}=\emptyset,\mcal{P}(m_i  )\subseteq \mcal{S}}}
|g(m) h(d) | (md)^{\varepsilon}
&\ll X^{2-\varepsilon} \sum_{\colt{ m, d\in\N^3}{\mcal{P}(m  )\subseteq 
\mcal{S}}}
\frac{|g(m) h(d) |}{ (md)^{1 -4\varepsilon}}
\ll X^{2-\varepsilon},
\end{align*}
pourvu que $1-4\varepsilon\geq\dm-\eta_0$.

Puisque 
$\vol(V(\ma{k},\ma{d}))=\vol(V)+O(\log\log X/\log X),$
la contribution du terme principal du  Corollaire \ref{corS*} est
$$ 2 \vol(\mcal{R})\vol(V)
  X^2(\log r'X)^3C^*$$
avec
$$C^*=\!\!\!\!
\sum_{\tolt{\ma{m},\ma{s},\ma{d}\in\N^3}{(d_1,d_2,d_3)=1 }{\mcal{P}(d_i)\cap
\mcal{S}=\emptyset,\mcal{P}(m_i s_i)\subseteq 
\mcal{S}}}\!\!\!\!\!\!\!\!\!\!\!\!\!\!\!
g(m_1m_2m_3)  h(d_1 d_2 d_3)\mu(s_1)\mu(s_2)\mu(s_3)
W(\ma{s},\ma{m},\ma{d}).$$
Ici, nous avons \'etendu la somme au $\max_i\{d_i,s_i,m_i\}> M$ en 
majorant $W(\ma{s},\ma{m},\ma{d})$
gr\^ace \`a  
  \eqref{majW}. La m\^eme m\'ethode
que ci-dessus fournit un terme d'erreur acceptable.

Il reste \`a v\'erifier la valeur de $C^*$.
Posons
$$\overline{\rho}^*(p^{\nu_1},p^{\nu_2},p^{\nu_3})=
\frac{\rho^*(p^{\nu_1},p^{\nu_2},p^{\nu_3})}{p^{2\nu_1+2\nu_2+2\nu_3}}
\qquad (\nu_1+\nu_2+\nu_3\geq 1)$$
et $\overline{\rho}^*(1,1,1)=1-1/p^2$.
En \'ecrivant la somme d\'efinissant $C^*$
sous forme de produit eul\'erien, nous obtenons
$$C^*=\prod_p\Big(1-\frac{1}{p}\Big)^3C^*_p$$
avec $C^*_p $ ayant deux expressions suivant que $p\in \mcal{S}$.
Lorsque $p\in \mcal{S}$, nous avons
$$C^*_p=\sum_{\mmu\in\Z_{\geq 0}^3}g(p^{\mu_1+\mu_2+\mu_3})
\sum_{\msigma\in\{0,1\}^3}(-1)^{\sigma_1+\sigma_2+\sigma_3} \sum_{\mnu\in
\Z_{\geq 0}^3}\overline{\rho}^*(p^{N_1},p^{N_2},p^{N_3})$$
avec la relation
$N_i=\max\{ \sigma_i+\mu_i ,\mu_i+ \nu_i\} $ alors que lorsque 
$p\notin \mcal{S}$ nous avons
$$C^*_p=\sum_{\colt{\mdelta\in\Z_{\geq 0}^3}{\#\{i\,:\, \delta_i\geq 
1\}\leq 1}}h(p^{\delta_1
+\delta_2+\delta_3})  \sum_{\mnu\in
\Z_{\geq 
0}^3}\overline{\rho}^*(p^{\delta_1+\nu_1},p^{\delta_2+\nu_2},p^{\delta_3+\nu_3}).$$
Lorsque $p\notin \mcal{S}$, nous avons  
$\overline{\rho}^*(p^{\nu_1},p^{\nu_2},p^{\nu_3})=0$  
quand $\#\{i\,:\, \nu_i\geq 1\}\geq 2$.  
Nous obtenons apr\`es changement d'indices
\begin{align*}
C^*_p&= \sum_{\mnu'\in
\Z_{\geq 0}^3}\overline{\rho}^*(p^{ \nu_1'},p^{\nu_2'},p^{ \nu_3'})
  \sum_{\tolt{\mdelta\in\Z_{\geq 0}^3}{\#\{i\,:\, \delta_i\geq 1\}\leq 
1}{0\leq \delta_i\leq \nu_i'}}
h(p^{\delta_1
+\delta_2+\delta_3})
\\&=\sum_{\mnu'\in
\Z_{\geq 0}^3}\overline{\rho}^*(p^{ \nu_1'},p^{\nu_2'},p^{ \nu_3'})
  \sum_{\colt{\mdelta\in\Z_{\geq 0}^3}{0\leq \delta_i\leq \nu_i'}}
h(p^{\delta_1
+\delta_2+\delta_3})
\\&=\sum_{\mnu'\in
\Z_{\geq 0}^3}(g*\mu)(p^{ \nu_1'+\nu_2'+ \nu_3'})\overline{\rho}^*(p^{ 
\nu_1'},p^{\nu_2'},p^{
\nu_3'}),
\end{align*}
o\`u nous avons utilis\'e la relation $h*1=g*\mu$.
Gr\^ace \`a \eqref{ovrhoP}, nous obtenons
\begin{align*}
C^*_p& =
\sum_{\colt{\mnu \in
\Z_{\geq 0}^3 }{\nu_1 +\nu_2 + \nu_3\geq 1}}
\!\!\!\!\!\!\!\!
g (p^{ \nu_1 +\nu_2 + \nu_3 })\sum_{ \msigma\in\{0,
1\}^3}(-1)^{\sigma_1+\sigma_2+\sigma_3}\overline{\rho}^*(p^{ 
\nu_1+\sigma_1},p^{\nu_2+\sigma_2},p^{
\nu_3+\sigma_3})
\\&= \sum_{\mnu \in
\Z_{\geq 0}^3 }
g (p^{ \nu_1 +\nu_2 + \nu_3 })
\overline{\rho}^\dagger_p(\nu_1,\nu_2,\nu_3).
\end{align*}

Lorsque $p\in \mcal{S}$, nous obtenons apr\`es changement d'indices
$$C^*_p=\sum_{\ma{N}\in\Z_{\geq 0}^3}\overline{\rho}^*(p^{N_1},p^{N_2},p^{N_3})
\sum_{\colt{\mmu\in\Z_{\geq
0}^3}{\mu_i\leq N_i}}g(p^{\mu_1+\mu_2+\mu_3})
\!\!\!\!\!\!\!\!\!\!\sum_{\colt{\mnu\in\Z_{\geq
0}^3,\msigma\in\{0,1\}^3}{\max\{\nu_i,\sigma_i\}=N_i-\mu_i}}\!\!\!\!\!\!(-1)^{\sigma_1+\sigma_2+\sigma_3}.
  $$
ce qui fournit encore gr\^ace \`a \eqref{ovrhoP} et une interversion 
des sommations
\begin{align*}
C^*_p&= \sum_{\mnu \in
\Z_{\geq 0}^3 }
g (p^{ \nu_1 +\nu_2 + \nu_3 })
\overline{\rho}^\dagger_p(\nu_1,\nu_2,\nu_3).
\end{align*}

Notons
$\mu_{\mnu}(n)$ la fonction multiplicative en $n$ d\'efinie, lorsque 
$r=\#\{ i\,:\, \nu_i\geq 1\}$, par
\begin{equation*} 
\mu_{\mnu} ( p^n )=\left\{\begin{array}{ll}
\mu ( p^n ), &\mbox{si  $n=0$ ou $r\leq 1$},\\
-r  ,  &\mbox{si $n=1$, $r\geq 2$},
\\
1  ,  &\mbox{si $n=2$, $r=2$},\\
3  ,  &\mbox{si $n=2$, $r=3$},\\
0  ,  &\mbox{si $n\geq 3$, $r= 2$},\\
-1  ,  &\mbox{si $n=3$, $r=3$},
\\
0 ,  &\mbox{si $n\geq 4$}.
\end{array}
\right.
\end{equation*}
  Lorsque $p\nmid \Delta_{12}\Delta_{13}\Delta_{23}$, la condition 
$\rho^*(p^{\nu_1},p^{\nu_2},p^{
\nu_3})\neq 0$ implique $\mu_{\mnu}(p^n)=\mu(p^n)$ et ainsi nous avons
$$\qquad (g*\mu_{\mnu})(p^{\nu_1+\nu_2+\nu_3}) 
=(1*h)(p^{\nu_1+\nu_2+\nu_3})   .$$

Or
$$\sum_{\colt{\mnu,\msigma\in\Z_{\geq
0}^3}{\max\{\nu_i,\sigma_i\}=N_i-\mu_i}}(-1)^{\sigma_1+\sigma_2+\sigma_3}=
\mu(p^{N_1-\mu_1})\mu(p^{N_2-\mu_2})\mu(p^{N_3-\mu_3}),$$
et lorsque $\nu_j\leq N_j$ nous avons
$$\mu_{\ma{N}} ( p^{\nu_1+\nu_2+\nu_3}
)=\sum_{\mu_1+\mu_2+\mu_3=\nu_1+\nu_2+\nu_3}\mu(p^{N_1-\mu_1})\mu(p^{N_2-\mu_2})\mu(p^{N_3-\mu_3}).$$
Il est facile alors d'en d\'eduire la formule
\begin{equation*} 
C_p^*  =  \sum_{ \mnu\in\Z_{\geq 0}^3} 
(g*\mu_{\mnu})(p^{\nu_1+\nu_2+\nu_3})
  \frac{\rho^*(p^{\nu_1},p^{\nu_2},p^{ 
\nu_3})}{p^{2\nu_1+2\nu_2+2\nu_3}} .
\end{equation*}

\subsection{D\'emonstration du Corollaire \ref{corT'}. }

Nous d\'eduisons le Corollaire \ref{corT'} du Corollaire \ref{corT*} 
via une int\'egration par parties.
Sa forme n'\'etant pas usuelle, nous fournissons quelques d\'etails.
La formule
$$\frac{1}{\max\{|x_1|,|x_2|\}^2}=2\int_{\max\{|x_1|,|x_2|\}}^{X}\frac{\d 
t}{t^3}+
\frac{1}{X^2}$$
implique apr\`es une interversion de sommation
l'estimation
   $$ T_g'(X;V') =2\int_{1}^{X}\sum_{\colt{\x\in \Z^2\cap
t\mcal{R}}{(x_1,x_2)=1}} g'(L_1(\x), L_2(\x), Q(\x);V')
\frac{\d t}{t^3}+ O\big(   (\log X)^{3}
\big) .$$
Puisque la contribution des ${\x\in \Z^2\cap
t/(\log t)\mcal{R}}$ est major\'ee par $O((\log X)^{3})$, on peut 
remplacer la condition
$$\Big(\frac{\log d_1}{\log  Y},\frac{\log d_2}{\log  Y},\frac{\log 
d_3}{2\log  Y},\frac{\log
\max\{|x_1|,|x_2|\}}{ \log  Y}\Big)\in V'$$
par
$$\Big(\frac{\log d_1}{\log  r't},\frac{\log d_2}{\log 
r't},\frac{\log d_3}{2\log
r't} \Big)\in V_{t,Y} $$
avec
$$V_{t,Y}:=\Big\{\ma{t}\in [0,1]^3\,:\quad 
\Big(t_1,t_2,t_3,1+O\Big(\frac{\log\log (3t) }{ \log
(3t)}\Big)\Big)\in
\frac{
\log  Y}{\log  r't }(V'\cap V_0')\Big\}.$$ Ici, nous avons utilis\'e 
la relation $\log r'\ll 1$.
Puisque
$$\vol(V_{t,Y})=\Big(\frac{
\log  Y}{\log  r't }\Big)^3\int_{(t_1,t_2,t_3,\log t/\log Y)\in 
V'\cap V_0'} \d t_1\d t_2\d t_3,$$
le Corollaire \ref{corT*} fournit alors
\begin{align*} T_g'(X;V') &=2\int_{1}^{X}\sum_{\colt{\x\in \Z^2\cap
t\mcal{R}}{(x_1,x_2)=1}} g (L_1(\x), L_2(\x), Q(\x);V_{t,Y})
\frac{\d t}{t^3}+ O\big(   (\log X)^{3}
\big) \\&=
4 C^* \vol(\mcal{R})   (\log    Y)^3\int_{1}^{X} 
\int_{(t_1,t_2,t_3,\log t/\log Y)\in V'\cap V_0'} \d
t_1\d t_2\d t_3\frac{\d t}{t }\\&\quad+ O\big(   (\log X)^{3+\varepsilon}
\big)\\
&=
4 C^* \vol(\mcal{R})\vol(V'\cap V_0'(\log X/\log Y))   (\log    Y)^4+ 
O\big(   (\log X)^{3+\varepsilon}
\big)
.\end{align*}

\end{document}